\newif\ifARXIV
\setlist[itemize]{left=.1cm, itemsep=5pt} %
\setlist[enumerate]{ left=.1cm} %
\newcommand{\svg}[3]{
\begin{figure}[h]
\centering
\includegraphics[width=#2\textwidth]{#1}
\caption{#3}
\end{figure}
}
\newcommand{\field}{k}
\newcommand{\norm}[1]{\left\lVert #1 \right\rVert}
\newcommand{\pairing}{{\rm Low}}
\newcommand{\reduced}{{\rm bd}}
\newcommand{\fibered}[1]{ {\mathcal F}{\mathcal B}(#1) }
\newcommand{\barcode}[1]{ {\mathcal B}\left(#1\right) }
\newcommand{\conv}[1]{ {\rm conv}(#1) }
\newcommand{\dir}[1]{ {\rm dir}(#1) }
\newcommand{\codir}[1]{ {\rm codir}(#1) }
\newcommand{\induced}[1]{\field^{#1}}
\newcommand{\simpcomp}{S}
\newcommand{\matching}{\sigma}
\newcommand{\summand}{M}
\newcommand{\interval}{I}
\newcommand{\R}{\mathbb{R}}
\newcommand{\Rbb}{\mathbb{R}}
\newcommand{\Mbb}{\mathbb{M}}
\newcommand{\Ibb}{\mathbb{I}}
\newcommand{\N}{\mathbb{N}}
\newcommand{\Z}{\mathbb{Z}}
\newcommand{\M}{{\mathbb M}}
\newcommand{\disti}{{d_{\rm I}}}
\newcommand{\distb}{{d_{\rm B}}}
\newcommand{\leqn}{\leq_n}
\newcommand{\geqn}{\geq_n}
\newcommand{\sln}{<_n}
\newcommand{\sgn}{>_n}
\newcommand{\vine}{\mathrm{vine}}
\newcommand{\upperb}[1]{ U[#1] }
\newcommand{\lowerb}[1]{ L[#1] }
\newcommand{\birthcp}[1]{ C_B(#1) }
\newcommand{\deathcp}[1]{ C_D(#1) }
\newcommand{\birthLcp}[1]{ C_B^L(#1) }
\newcommand{\deathLcp}[1]{ C_D^L(#1) }
\newcommand{\supp}[1]{ {\rm supp}(#1) }
\newcommand{\dimension}[1]{ {\rm dim}(#1) }
\newcommand{\rectangle}[2]{ R_{#1, #2} }
\newcommand{\interior}[1]{{\kern0pt#1}^{\mathrm{o}}}
\newcommand\restr[2]{{  \left.\kern-\nulldelimiterspace #1 \vphantom{\big|} \right|_{#2}  }}
\DeclareRobustCommand*{\ora}{\overrightarrow}
\newcommand{\birthptapprox}{b_l}
\newcommand{\RIVET}{\texttt{Rivet}}
\newcommand{\MMA}{\texttt{MMA}}
\newcommand{\mMMA}{\mathtt{MMA}}
\newcommand{\bound}{}
\newcommand{\twobound}{2}
\newcommand{\mmaout}{candidate}
\newcommand{\mmaoutcompat}{approximate}
\newcommand{\compacityassumption}{compactly characterizes}
\newcommand{\Lan}{\ensuremath{\mathrm{Lan}_n}\,}
\newcommand*{\vcenteredhbox}[1]{\begingroup
	\setbox0=\hbox{#1}\parbox{\wd0}{\box0}\endgroup}
\newcommand{\bluerectangletwo}{\vcenteredhbox{\includesvg[width=2em]{symbols/solo_blue_rectangle2.svg}\xspace}}
\newcommand{\bluerectangletwotilde}{\vcenteredhbox{\includesvg[width=2em]{symbols/solo_blue_rectangle2tilde.svg}}}
\newcommand{\redz}{\vcenteredhbox{\includesvg[width=2em]{symbols/solo_red_z.svg}\xspace}}
\newcommand{\redztilde}{\vcenteredhbox{\includesvg[width=2em]{symbols/solo_red_ztilde.svg}\xspace}}
\newcommand{\ordonly}[0]{\ensuremath{\mathrm{ord}}}
\newcommand{\ord}[1]{\ensuremath{\ordonly\left(#1\right)}}
\newtheorem{theorem}{Theorem}%
\newtheorem{proposition}[theorem]{Proposition}%
\newtheorem{example}{Example}%
\newtheorem{remark}{Remark}%
\newtheorem{definition}{Definition}%
\newtheorem{lemma}[theorem]{Lemma}
\newcommand{\rebuttal}[1]{{ #1}}
\title{Multi-parameter Module Approximation: an efficient and interpretable invariant for multi-parameter persistence modules with guarantees}
\date{}
\title[\rebuttal{Multi-parameter Module Approximation: an efficient and interpretable invariant for multi-parameter persistence modules with guarantees}]{Multi-parameter Module Approximation: an efficient and interpretable invariant for multi-parameter persistence modules with guarantees}
\author[1,3]{David Loiseaux\footnote{Corresponding author}}
\author[1]{Mathieu Carrière}
\author[2]{Andrew J. Blumberg}
\affil[1]{{\small\texttt{surname.name@inria.fr},      DataShape, Centre Inria d'Université Côte d'Azur, France}}
\affil[2]{{\small\texttt{surname.name@columbia.edu}, Department of Mathematics, Columbia University, USA}}
\affil[3]{{\small LIX, CNRS, École Polytechnique, IP Paris,  France}}
\author[1]{David Loiseaux\footnote{Corresponding author}}
\author[2]{Mathieu Carrière}
\author[3]{Andrew J. Blumberg}
\affil[1]{\texttt{david.loiseaux@inria.fr},      DataShape, Centre Inria d'Université Côte d'Azur, France}
\affil[2]{\texttt{mathieu.carriere@inria.fr},    DataShape, Centre Inria d'Université Côte d'Azur, France}
\affil[3]{\texttt{andrew.blumberg@columbia.edu}, Department of Mathematics, Columbia University, USA}
\begin{document}
\maketitle

\begin{abstract}

Topological data analysis (TDA) is a rapidly growing area of data
science, whose
most common descriptor is %
persistent homology, which tracks the topological changes in growing
families of subsets of the data set itself, called filtrations, and
encodes them in an algebraic object, called a persistence module.  The
algorithmic and theoretical properties of persistence modules are now
well understood in the single-parameter case, that is, when there is
only one filtration (e.g., feature scale) to study.  In contrast, much
less is known in the multi-parameter case, where several filtrations
(e.g., scale and density) are used simultaneously. 
Since multi-parameter persistence modules usually encode information that is invisible to their single-parameter counterparts, %
it is critical to build
tractable 
proxies for them, \rebuttal{ideally with some theoretical robustness guarantees}. %

In this article, we introduce a new parameterized family of topological 
\rebuttal{descriptors},
taking the form of {\em \rebuttal{\mmaout{}} decompositions}, for multi-parameter persistence modules, \rebuttal{and we a identify a subfamily of these descriptors, that we call {\em \mmaoutcompat{} decompositions}, that}
are controllable approximations, \rebuttal{in the sense that they preserve}
diagonal barcodes. 
Then, we introduce \MMA{} (Multipersistence Module Approximation): an algorithm \rebuttal{based on matching functions} for computing %
instances of \rebuttal{\mmaout{} decompositions with some precision parameter $\delta > 0$.} %
By design, \MMA{} can handle an arbitrary number of filtrations,
and has bounded complexity and running time. 
\rebuttal{Moreover, we prove the robustess of \MMA{}: when computed with so-called {\em compatible} matching functions, we show that \MMA{} produces \mmaoutcompat{} decompositions (and we prove that such matching functions exist for $n=2$ filtrations). Next,} 
\rebuttal{we restrict the focus on}
modules that can be decomposed into interval summands. In that case,
\rebuttal{compatible matching functions always exist, and we show that, for small enough $\delta$, the \mmaoutcompat{} decompositions obtained with such compatible matching functions by \MMA{} have an
approximation error %
(in terms of the standard interleaving and bottleneck distances) that is bounded by $\delta$, and that reaches zero for an even smaller, positive precision $\delta_{\rm exact}$.}
 Finally, we present empirical evidence validating that \MMA{} has state-of-the-art performance and running time on several data sets.
\end{abstract}

\ifARXIV{}
\hfill{{\footnotesize\textbf{Keywords:} Multiparameter Persistent Homology, Persistence Modules, Interval Modules, Approximation Methods, Convergence Analysis}}
\else
\keywords{Multiparameter Persistent Homology, Persistence Modules, Interval Modules, Approximation Methods, Convergence Analysis}
\fi

\section{Introduction}

Topological Data Analysis (TDA)~\cite{edelsbrunnerComputationalTopologyIntroduction2010,
oudotPersistenceTheoryQuiver2015}
is a new and rapidly developing area of data science that has seen a lot of interest due to its success in various applications,
ranging from
bioinformatics~\cite{Rabadan2019} to material science~\cite{Buchet2018}. 
The main computational tool of TDA is {\em persistent homology} (PH).  
Whereas homology is a qualitative descriptor of the shape of a topological space $\simpcomp$,
the core idea of PH is to capture how the homology groups change 
when computed on a {\em filtration} of $\simpcomp$. 
A filtration is a family $\{\simpcomp_x\subseteq \simpcomp\}_{x\in A}$ %
of subspaces of $\simpcomp$ indexed over a partially ordered set (poset) $A$,
that is {\em nested w.r.t. inclusion}, i.e., it satisfies
$\simpcomp_x\subseteq \simpcomp_y$ for any $x\leq y$. 
Then, 
the functoriality of homology and these inclusions induce morphisms between the corresponding 
homology groups $H_*(\simpcomp_x) \rightarrow H_*(\simpcomp_y)$ for each pair
$x\leq y$, which allows to detect the differences in homology when going from
index $x$ to index $y$.
One of the most common ways to produce such filtrations %
is to study the {\em sublevel sets} of a continuous {\em filter} function $f:\simpcomp\rightarrow \R^n$, 
defined with $\simpcomp_x=f^{-1}(\{x'\in \R^n : x'\leqn x\})$; %
the partial order on the poset $ \R^n$ (denoted by $\leqn$) is defined, for $x,y\in\R^n$, as $x \leqn y$ if and only if  $x_i \leq y_i$ for every dimension $i$. 

\paragraph*{Single-parameter PH.} 
When $\mathcal A$ is totally ordered, e.g., when $\mathcal A\subseteq \R$,
then applying the homology functor $H_*(-;\field)$ for a field $\field$ to a (single-parameter) filtration 
results in a sequence of vector spaces 
connected by linear maps, called a {\em single-parameter persistence module}.  This situation has been studied extensively
in the TDA literature~\cite{carlssonTopologyData2009, Chazal2016, edelsbrunnerComputationalTopologyIntroduction2010, oudotPersistenceTheoryQuiver2015}. 
Notably, one can show that such 
persistence modules can always be decomposed into a direct sum of simple {\em interval summands}: $\M\simeq \bigoplus_{i\in \mathcal I} \rebuttal{\field^{\interval(b_i,d_i)}}$, where
each interval summand \rebuttal{$\field^{\interval(b_i,d_i)}$}
intuitively represents the lifetime \rebuttal{$\interval(b_i,d_i)$} of a topological \rebuttal{feature}, i.e., $b_i$ is the appearance time (birth) and $d_i$ is the disappearance time (death) of  a
topological \rebuttal{feature}, that is detected by 
homology as the index increases. Moreover, single-parameter persistence modules can be efficiently represented in a compact descriptor 
called the {\em persistence barcode}, and several representation methods, such as Euclidean embeddings and kernels for machine learning classifiers, have been
proposed for such barcodes in the literature~\cite{bubenikStatisticalTopologicalData2015, adamsPersistenceImagesStable2017, reininghausStableMultiscaleKernel2015, carriereSlicedWassersteinKernel2017, carrierePersLayNeuralNetwork2020}.  As a consequence, most applications of 
TDA use single-parameter persistence modules, and often use the sublevel sets of, e.g., the data set scale,
as the corresponding single-parameter filtration.

\paragraph*{Multi-parameter PH.} 
However, many data sets come with not just one, but multiple, possibly intertwined,
salient filtrations.  For example, 
image data typically has both a spatial filtration and an intensity filtration, and arbitrary point cloud 
data can be filtered both by feature scale and density.  
Unfortunately, in general, the resulting {\em multi-parameter persistence modules}, obtained by applying the 
homology functor to a filtration indexed over $\R^n$~\cite{botnanIntroductionMultiparameterPersistence2023}, are much 
less tractable; in contrast to the single-parameter case, there is no decomposition theorem that can break down any module into 
a direct sum of simple indicator summands (e.g., interval modules).
Instead, there is now a rich literature on general decompositions into arbitrarily complicated summands~\cite{deyGeneralizedPersistenceAlgorithm2022, deyDecomposingMultiparameterPersistence2025} and their associated minimal presentations~\cite{fugacciChunkReductionMultiParameter2019, kerberFastMinimalPresentations2021, lesnickComputingMinimalPresentations2022}, 
on the theoretical study of a few restricted cases (such as \rebuttal{some specific} $n=2$-parameter filtrations or \rebuttal{exact p.f.d. (\cref{def:pfd}) $2$-parameter persistence modules}) where simple 
decompositions can be obtained~\cite{cochoyDecompositionExactPFD2020,
		botnanRectangleDecomposable2ParameterPersistence2022, 
        asashibaIntervalDecomposability2D2022,
        botnanLocalCharacterizationsDecomposability2023,
        leboviciLocalCharacterizationBlockdecomposability2024},
\rebuttal{and on simpler representations of multi-parameter persistence modules, such as the Euler characteristic}~\cite{hacquardEulerCharacteristicTools2024}, \rebuttal{the fibered barcode}~\cite{corbetKernelMultiparameterPersistent2019, vipondMultiparameterPersistenceLandscapes2020}, \rebuttal{the signed barcode}~\cite{botnanSignedBarcodesMultiparameter2024, loiseauxStableVectorizationMultiparameter2023}, \rebuttal{and the (generalized) rank invariant and persistence diagram}~\cite{kimGeneralizedPersistenceDiagrams2021, xinGRIL2parameterPersistence2023}. 
It has thus become crucial to define general topological \rebuttal{descriptors} for multi-parameter persistence modules that
are meaningful, visually interpretable, and easily computable.

\paragraph*{Contributions.} In this article, we introduce 
new \rebuttal{descriptors} for multi-parameter persistence modules (\rebuttal{ computable from} multi-parameter filtrations of  simplicial complexes\footnote{\rebuttal{Or even solely from module presentations, see~\cref{rem:free_pres}}.}) along with a new algorithm that we call \MMA{} (Multipersistence Module Approximation) for their practical computations.

Before going into our detailed contributions, we
provide a gist of the strategy used to define our descriptors with our
new algorithm \MMA{}.
For simplicity, let us start with an interval decomposable module $\Mbb$
(see \cref{def:interval_module_multi})
that is a direct sum of %
two interval summands %
$\Mbb = \field^{\bluerectangletwo} \oplus \field^{\redz}$.
See \cref{fig:mma_intuition}.

\begin{enumerate}
	\item We fix a grid of diagonal lines $L$ spaced by $\delta > 0$, and compute the %
	      barcodes %
	      associated to ${(\restr \Mbb l)}_{l \in L}$. 
    \item Using some matching function $\sigma$, we match bars of consecutive barcodes together. %
	\item We estimate an interval decomposable module
        $\tilde \Mbb = \field^{\bluerectangletwotilde} \oplus \field^{\redztilde}$ from these matched
	      barcodes. By design, $\tilde \Mbb$ has the same barcodes along $L$ than $\Mbb$.
\end{enumerate}
\begin{figure}[H]
	\centering
	\foreach \n in {0,2,4,5}{
			{\includesvg[width=.24\textwidth]{mma_schema/ht\n}}%
		}
	\caption[\MMA{} algorithm intuition]{The different steps of \MMA{} for computing a candidate decomposition of the module ${\Mbb = k^{\bluerectangletwo} \oplus k^{\redz}}$.
	}\label{fig:mma_intuition}
\end{figure}

Our contributions are five-fold:
\begin{enumerate}
    \item {\bf We introduce a new 
    family of topological \rebuttal{descriptors} for \rebuttal{finitely presented} multi-parameter persistence modules} (Definition~\ref{def:candidate}), 
    taking the form of {\em \mmaout{} decompositions} 
    $\tilde {\Mbb}_\delta=\bigoplus_{i\in\tilde{\mathcal I}}  \rebuttal{\field^{\tilde\interval_i}}$. These \mmaout{} decompositions are parameterized by a precision parameter $\delta > 0$, 
	and each $\rebuttal{\field^{\tilde I_i}}$ in these \mmaout{} decompositions %
    is an interval summand in $\R^n$.

    \item {\bf Then, we introduce our method \MMA{} (Multi-parameter persistence Module Approximation, Algorithm~\ref{algo:approx})
    for computing instances of such \mmaout{} decompositions}. \rebuttal{Our method is crucially  based on so-called {\em matching functions}}, and, \rebuttal{using 
    any matching function whose complexity is linear w.r.t. $N$}, 
    has
    running time 
    $$O\left(N^3 + \frac{1}{\delta^{n-1}}(N + n\cdot 2^{n-1})\right),$$ 
    where $N$ is the number of simplices and $n$ is the number of filtrations.
    See Figure~\ref{fig:mma_intro}. Note that \MMA{} does not require the input module $\Mbb$ to be interval decomposable in order to run. 

    \item %
    \rebuttal{{\bf We show that \MMA{} is a good approximation when computed with so-called {\em compatible} matching functions}}, i.e.,
    	that the \mmaout{} decompositions produced by \MMA{} are, in this case, {\em \mmaoutcompat{}
        decompositions}:
    	they preserve the (single-parameter) persistence barcodes associated to diagonal slices of the multi-parameter filtration (Proposition~\ref{prop:tildeIcandidate}):
	\begin{equation*}
		\textnormal{for all diagonal line } l\subseteq \mathbb R^n, \quad
		\distb \left(\barcode{\restr{\Mbb}{l}},
		\barcode{\restr{\tilde\Mbb^{\MMA{}}_\delta}{l}} \right)\leq
		2\delta.
	\end{equation*}
    	  We also show that, upon carefully choosing matching functions, the \mmaoutcompat{} decompositions produced by \MMA{} are also stable w.r.t. the input data (Proposition~\ref{prop:mma_stab}):
    	$$\disti(\tilde \Mbb^{\MMA{}}_\delta(f),\tilde \Mbb^{\MMA{}}_\delta(g)) \leq \distb(\tilde \Mbb^{\MMA{}}_\delta(f),\tilde \Mbb^{\MMA{}}_\delta(g)) \leq \norm{f-g}_\infty +\delta,$$
    	where $\tilde \Mbb^{\MMA{}}_\delta(f)$ stands for the candidate decomposition that is induced by the sublevel sets of $f$ and computed with \MMA{} (and similarly for $g$).

    \item %
    \rebuttal{{\bf Then, we restrict the focus to interval decomposable modules.}} 
    In that case, compatible matching functions always exist, and, under generic assumptions and small enough $\delta$, {\bf we prove that the interleaving and bottleneck distances between the \mmaoutcompat{} decompositions 
    produced by \MMA{} %
    and the underlying persistence module are upper bounded} (Proposition~\ref{prop:approx}):
    $$\disti(\Mbb,\tilde \Mbb^{\MMA{}}_\delta) \leq \distb(\Mbb,\tilde \Mbb^{\MMA{}}_\delta) \leq \delta.$$
    Moreover, we also show that, %
    when $\delta \leq \delta_{\rm exact}$, where $\delta_{\rm exact}$ is  a constant that depends only on the multi-parameter filtration values,
    the \mmaoutcompat{} decompositions %
    produced by \MMA{} %
    recover the underlying persistence module exactly (\cref{prop:exact_recovery}):
    $$\disti(\Mbb,\tilde \Mbb^{\MMA{}}_\delta) = \distb(\Mbb,\tilde \Mbb^{\MMA{}}_\delta) = 0.$$
    
    \item %
    {\bf We perform numerical experiments} that showcase the performance of \MMA{} and exhibit the trade-off between computation time and approximation error in Section~\ref{sec:expe}.
\end{enumerate}

\begin{figure}[H]
    \centering
    \includegraphics[width=.8\textwidth]{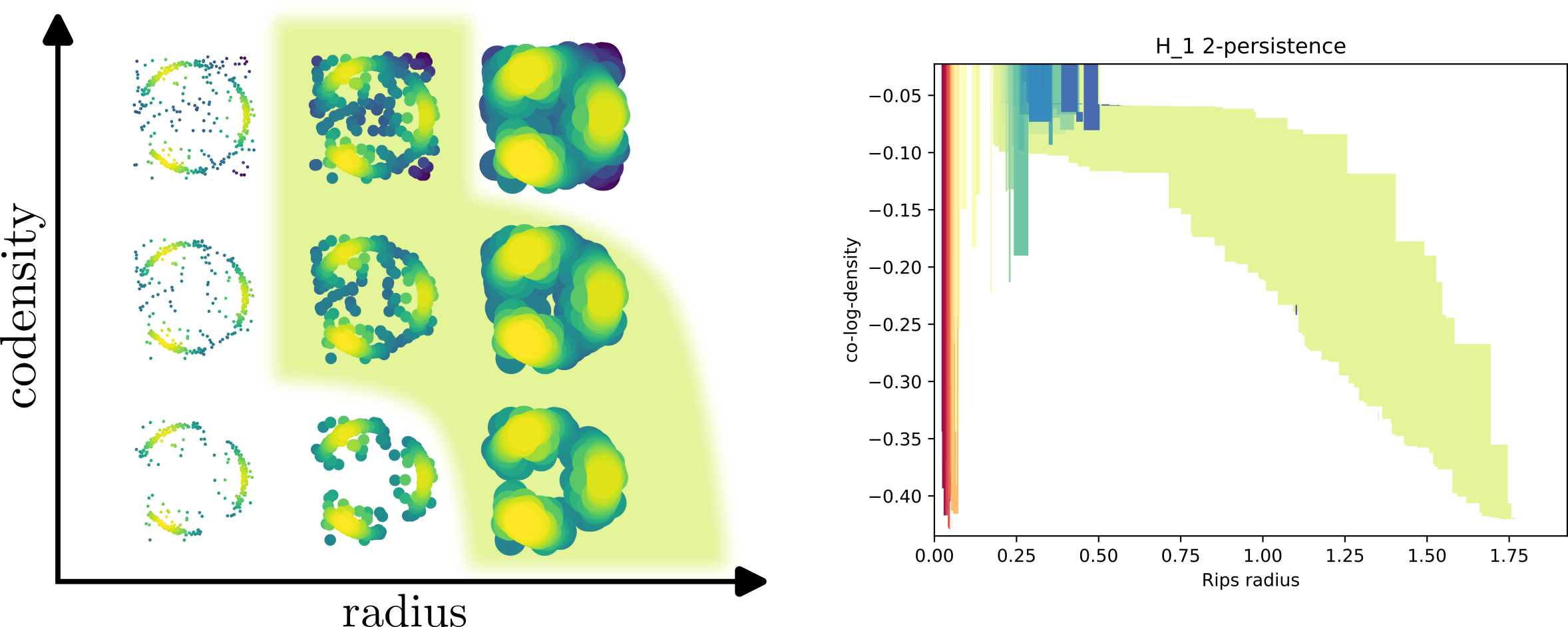}
    \caption{Example of \mmaout{} decomposition
    computed by \MMA{} on a point cloud filtered by both growing balls around
    the points (also called {\em \v Cech filtration}) and using the sublevel
    sets of codensity (or, equivalently, the superlevel sets of density), in
    homology dimension 1. One can see that there is a large lightgreen summand
    in the \mmaout{} decomposition on the right that corresponds to the cycle
    formed by the points amid outliers, which is also highlighted in lightgreen
    in the multi-parameter filtration on the left.}
    \label{fig:mma_intro}
\end{figure}

\paragraph*{Related work.} 
Computing approximate decompositions of multi-parameter persistence modules
with simple summands or interval summands has been studied in a few other
works.
For instance, in~\cite{deyRectangularApproximationStability2021}, the authors
provide an algorithm that computes optimal approximations with rectangle
summands of interval decomposable, $2$-parameter persistence modules in order
to lower bound their interleaving distance.
In~\cite{asashibaApproximation2DPersistence2023}, the authors provide a method
that associates to every $2$-parameter persistence module (and that was
recently generalized to any $n$-parameter persistence
module~\cite{asashibaIntervalReplacementsPersistence2024}) a pair of interval
decomposable persistence modules obtained by computing the M\"obius inversion
of the so-called {\em compressed multiplicity}, in a spirit similar to the
computation of generalized persistence diagrams and signed barcodes. 
Focusing on homology dimension 0 and $n=2$ {with the Vietoris-Rips
bifiltration on augmented metric spaces}, the authors in~\cite{caiElderRuleStaircodesAugmentedMetric2021}
have proposed the {\em elder-rule staircode}, an interval decomposable
persistence module who is known to recover the rank invariant and even the true
interval decomposition of the module (if it exists) under some assumptions. 
Focusing rather on stability (as general decompositions of modules are known to
be highly instable),
in~\cite{bjerkevikStabilizingDecompositionMultiparameter2025} the author
provides new decompositions for persistence modules based on {\em pruning}
(although with arbitrarily complicated summands) that enjoy better robustness
guarantees.

More closely related to our approach,
both~\cite{carriereMultiparameterPersistenceImage2020}
and~\cite{russoldGraphcodeLearningMultiparameter2024,
kerberRepresentingTwoparameterPersistence2025} provide methods to compute
descriptors for $2$-parameter persistence modules using matching functions
between persistence barcodes computed from a sorted family of $1$-dimensional
slices, namely the {\em multi-parameter persistence image}, and the {\em
graphcode}, respectively.  
While the multi-parameter persistence image encodes a decomposition constructed
by matching consecutive barcodes with the {\em vineyards}
algorithm~\cite{cohen-steinerVinesVineyardsUpdating2006} and computing the
summand boundaries with the endpoints of matched bars (which does {\em not}
guarantee that the resulting summand is an interval, or even that the resulting
module is close to the input), the graphcode is an abstract graph whose nodes
represent the bars of consecutive barcodes, and whose edges are built based on
matching functions representing canonical inclusions of representative cycles,
after fixing some cycle bases for every slice. Note that these matchings are
{\em not} one-to-one: a representative cycle in one barcode might be included
in several others in the next barcode. 

Our method differs from the previous ones in three key aspects: first, note
that, except for~\cite{asashibaIntervalReplacementsPersistence2024,
bjerkevikStabilizingDecompositionMultiparameter2025}, all other approaches are
designed for $n=2$ filtrations, while our method \MMA{} can handle an arbitrary
number $n$ of filtrations (albeit without theoretical guarantees if $n > 2$)
and works in any homology dimension. Second, we designed \MMA{} so that it has
{\em parameterized} complexity: while the other approaches might be costly to
compute as they rely on, e.g., Möbius inversions on large non-grid posets, the
running time for \MMA{} is controlled by the user through the choice of
$\delta$ and of the matching function. Finally, and most importantly, our
intention with the interval decomposable module produced by \MMA{} was to
provide a descriptor that is:

\begin{enumerate}[label=(\textit{\roman*})]
    \item {}\label{enum:intro_interpretable} {\em interpretable} in the same
	way than  persistence barcodes: its summands should correspond to
	lifetimes (in $\R^n$) of some homologous representative cycles, and
    \item {}\label{enum:intro_stable} {\em as stable as possible}: as it is
	impossible to provide interval decompositions that are always consistent with
	the rank invariant of the module (see~\cite[Section
	10.2.3]{lesnickNotesMultiparameterPersistence2023}), and as we still
	want to encode more information than the pointwise dimension of the
	module (i.e., the Hilbert function, which already requires a cubic
	complexity to compute in the two parameter case
	\cite{clauseMetaDiagrams2ParameterPersistence2023}), we seek for
	decompositions that
	preserve the persistence barcodes of diagonal slices of
	the module.
\end{enumerate}

We prove in this article that our method \MMA{} produces interval
decompositions that satisfy both
\cref{enum:intro_interpretable,enum:intro_stable}. On the other hand, other
approaches are either less
interpretable~\cite{deyRectangularApproximationStability2021,
asashibaApproximation2DPersistence2023,
asashibaIntervalReplacementsPersistence2024,
bjerkevikStabilizingDecompositionMultiparameter2025,
russoldGraphcodeLearningMultiparameter2024,
kerberRepresentingTwoparameterPersistence2025}, thus not satisfying
\ref{enum:intro_interpretable}, or not stable
enough~\cite{carriereMultiparameterPersistenceImage2020,
caiElderRuleStaircodesAugmentedMetric2021}, thus not satisfying
\ref{enum:intro_stable}.

More precisely, concerning \ref{enum:intro_stable}, our method can be seen as a
generalization of the decompositions provided
in~\cite{caiElderRuleStaircodesAugmentedMetric2021}, as we are {\em not}
restricted to homology dimension $0$, and as a continuation of the
decompositions provided in~\cite{carriereMultiparameterPersistenceImage2020},
as $(a)$ we generalize it to {\em compatible matching functions} instead of
relying solely on the vineyards algorithm, $(b)$ we guarantee that the produced
decompositions only contain {\em interval summands} instead of producing
summands supported on arbitrarily complicated shapes, and $(c)$ we prove that
the produced decomposition {\em preserve the diagonal barcodes}, and recover
the true decompositions exactly when the input module is itself interval
decomposable. 

Finally, one might argue that the interpretability property that we ask in
\ref{enum:intro_interpretable} also induces some degree of arbitrariness, as
there are, in general, several ways to choose representative cycles and their
lifetimes (precisely because there are no good barcodes in multi-paramerer
persistent homology, again see~\cite[Section
10.2.3]{lesnickNotesMultiparameterPersistence2023}). This is perfectly true,
and illustrates the trade-off between interpretability and computability that
is often encountered in this field; note for instance that
graphcodes~\cite{russoldGraphcodeLearningMultiparameter2024,
kerberRepresentingTwoparameterPersistence2025} also rely on arbitrary choices
of cycles bases. However, notice that $(a)$ using such interval decompositions
has nonetheless already proved to be useful in computational
topology~\cite{blaserCoreBifiltration2025} and data
science~\cite{loiseauxFrameworkFastStable2023}, and $(b)$ 
the {\em collection} of interval decompositions produced by \MMA{}
obtained by ranging over all compatible matching functions is itself a
topological {\em invariant} of the module. %
 \\

\paragraph*{Outline.} \rebuttal{ \cref{sec:background} provides a concise review of multi-parameter persistence modules and related notions. In~\cref{sec:candidates}, we present our new descriptors for multi-parameter persistence modules, as well as the \MMA{} algorithm for computing them. In~\cref{sec:approx}, we present some approximation properties satisfied by \MMA{}. Then, we provide stronger robustness guarantees for interval decomposable modules in~\cref{sec:exact_algo}, and we discuss the design of matching functions in~\cref{sec:matching_mma}.}
Finally, we illustrate the performances of \MMA{} in Section~\ref{sec:expe}.

\section{Background}
\label{sec:background}

In this section, we recall the basics of %
multi-parameter persistent homology and persistence modules.
This section only contains the necessary background and
notations, and can be skipped if the reader is already familiar with persistence theory. A more complete treatment of persistence modules
can be found in~\cite{Chazal2016, deyComputationalTopologyData2022, oudotPersistenceTheoryQuiver2015, botnanIntroductionMultiparameterPersistence2023}.

\paragraph*{Notations.}
We first introduce a few notations: we let $(e_1,\dots,e_n)$ be the canonical basis of $\R^n$,
and,
given a set $A\subseteq \R^n$,
we let $\conv{A}$ denote the
convex hull of $A$. Moreover, given a hyperplane $H\subseteq \R^n$ and its two associated
vectors $a_H,b_H\in\R^n$ which satisfy $H=b_H+\{x\in\R^n: \langle x,a_H \rangle=0\}$,
we call $a_H$ the {\em codirection} of $H$.
When $a_H$ is a vector in the canonical basis of $\R^n$, i.e., there exists $i\in \llbracket 1,n\rrbracket$ such that
$a_H=e_i$, we slightly abuse notation and also call $i$ the codirection of $H$.
\rebuttal{Finally, given a point $x\in \R^n$, we let $\left< x \right<$ (resp.
	$\left> x \right>$) denote the {\em upset} (resp. {\em downset}) of $x$:
$\left< x \right< := \left\{ y \in\R^n: y\geqn x \right\}$ (resp. $\left> x \right> := \left\{ y \in\R^n: y \leqn x \right\}$). }

\paragraph*{Multi-parameter persistence modules.}
In their most general form, multi-parameter persistence modules \cite{carlssonTheoryMultidimensionalPersistence2009} are nothing but
$\field$-vector spaces (where $\field$ denotes a field) indexed by $\R^n$ and
connected by linear maps.\\

\begin{definition}[Multi-parameter persistence module]\label{def:multipers_module}
	An ($n$-)multi-parameter persistence module $\Mbb$ is a family of vector spaces indexed over $\R^n$: $\Mbb=\{M_x\}_{x\in\R^n}$,
	equipped with linear transformations $\{\varphi_x^y: M_x\rightarrow M_y\}_{
		x,y\in\R^n,x\leqn y
	}$, that are called the {\em transition maps} of $\Mbb$,
	and
	that satisfy
	$\varphi_x^z=\varphi_y^z\circ\varphi_x^y$ \rebuttal{and $\varphi^x_x = \mathrm{id}$} for any $x\leqn y\leqn z$. \rebuttal{We sometimes let $\Mbb(x\leqn y):=\varphi_x^y$ denote these transition maps.}

	A {\em morphism} between two multi-parameter persistence modules $\Mbb,\Mbb'$
	with transition maps $\varphi^\cdot_\cdot$ and $\psi^\cdot_\cdot$ respectively, is a collection of linear maps $f=\{f_x:M_x\rightarrow M'_x\}_{x\in\R^n}$,
	that commutes with transitions maps, i.e., one has
	$f_y\circ \varphi^y_x=\psi^y_x\circ f_x$, for all $x\leqn y$. \\
\end{definition}

\rebuttal{
	Multi-parameter persistence modules are often assumed to satisfy finiteness
	assumptions, such as being
	\emph{pointwise finite dimensional} (\cref{def:pfd}) or
	\emph{finitely presentable} (\cref{def:fp}).
	See \cite{botnanIntroductionMultiparameterPersistence2023} for more details. \\

	\begin{definition}[Pointwise finite dimensional module]\label{def:pfd}
		Let $\Mbb$ be an $n$-parameter persistence module.
		We say that
		$\Mbb$ is \emph{pointwise finite dimensional (p.f.d.)}, if for any $x\in \R^n$, one has $\dim \Mbb _x <\infty$. \\
	\end{definition}
}

In this article, all multi-parameter persistence modules come from applying the {\em homology functor} $H_*$ on
a {\em multi-parameter filtration} of a
\rebuttal{topological space}
$\simpcomp$, that is, on
a family $\{\simpcomp_x\}_{x\in\R^n}$ of subsets of $\simpcomp$ indexed over $\R^n$ such that $x\leqn y \Rightarrow \simpcomp_x\subseteq \simpcomp_y$.
In other words, we study modules of the form $\Mbb:=\{H_*(\simpcomp_x)\}_{x\in\R^n}$,
where the linear maps $H_*(\simpcomp_x)\to H_*(\simpcomp_y)$ are induced by the canonical inclusions $\simpcomp_x\subseteq \simpcomp_y$ (when $x\leqn y$).
There are many interesting multi-parameter filtrations in data science;  one of
the most common one (with $n=2$) comes from filtering by feature scale and
density.  This allows to detect the topological structures (encoded in the
homology groups) of point clouds in the face of noise and outliers~\cite{blumbergStability2parameterPersistent2021, carriereMultiparameterPersistenceImage2020}. \\

\begin{definition}
	The {\em direct sum} of two multi-parameter persistence modules $\Mbb$ and
	$\Mbb'$, written as $\Mbb\oplus\Mbb'$, is the module $\Mbb''$ with vector spaces $ \{M''_x\}_{x\in\R^n}$ and transition maps $(\varphi'')_\cdot^\cdot$, defined as
	$M''_x = M_x\oplus M'_x$ for all $x\in\R^n$, and $(\varphi'')_\cdot^\cdot = \varphi_\cdot^\cdot \oplus (\varphi')_\cdot^\cdot$,
	where $ \{M_x\}_{x\in\R^n}$ (resp. $ \{M'_x\}_{x\in\R^n}$) and $\varphi_\cdot^\cdot$ (resp. $(\varphi')_\cdot^\cdot$) are the vector spaces and transition maps of $\Mbb$ (resp. $\Mbb'$) respectively.

	A multi-parameter persistence module $\Mbb$ such that there are no non-trivial modules $A$ and $B$ such that $\Mbb\simeq A \oplus B$ is called {\em indecomposable}. \\
\end{definition}

Note that while multi-parameter persistence modules can always be decomposed
into indecomposable summands \rebuttal{if they are p.f.d.}
(see~\cite{deyGeneralizedPersistenceAlgorithm2022,
deyDecomposingMultiparameterPersistence2025} for corresponding algorithms),
these summands can be arbitrarily complicated, and the resulting decomposition cannot really be used as an intuitive and simple invariant of the module.

\paragraph*{Distances between modules.}
Multi-parameter persistence modules can be compared with the standard {\em interleaving distance}~\cite{lesnickTheoryInterleavingDistance2015}. \\

\begin{definition}[Interleaving distance]\label{def:interleaving_distance}
	Given $ \varepsilon>0$, two multi-parameter persistence
	modules $\Mbb$ and $\Mbb'$ are {\em $\boldsymbol\varepsilon$-interleaved} if there exist
	two morphisms $f \colon \Mbb \to \Mbb'( \boldsymbol\varepsilon)$ and $g\colon \Mbb' \to
	\Mbb(\boldsymbol \varepsilon)$ such that
	$
	g_{\cdot+\boldsymbol \varepsilon} \circ f_\cdot = \varphi_{\cdot}^{\cdot + 2
	\boldsymbol \varepsilon} \text{ and } f_{\cdot+\boldsymbol\varepsilon}\circ g_\cdot =
	\psi_{\cdot}^{\cdot + 2 \boldsymbol\varepsilon},
	$
	where $\Mbb(\boldsymbol\varepsilon)$ is the {\em shifted module} $\{M_{x+\boldsymbol\varepsilon}\}_{x\in\R^n}$, $\boldsymbol\varepsilon = ( \varepsilon, \dots, \varepsilon)\in\R^n$,
	and $\varphi$ and $ \psi$ are the transition maps of $\Mbb$ and $\Mbb'$ respectively.

	The {\em interleaving distance} between two multi-parameter persistence modules $\Mbb$ and $\Mbb'$ is then defined as
	$
	\disti (\Mbb,\Mbb') := \inf \left\{  \varepsilon \ge 0 : \Mbb \text{ and } \Mbb'
	\text{ are } \boldsymbol\varepsilon\text{-interleaved}\right\}
	$. \\
\end{definition}

The main property of this distance is that it is {\em stable} for multi-parameter filtrations that are
obtained from the sublevel sets of functions. More precisely, given two continuous functions $f,g:\simpcomp\to\R^n$ defined on a \rebuttal{topological space}
$\simpcomp$
let $\Mbb(f),\Mbb(g)$ denote the multi-parameter persistence modules obtained from the corresponding multi-parameter filtrations
$\{\simpcomp^f_x:=\{s \in \simpcomp : f(s)\leqn x\}\}_{x\in\R^n}$ and
$\{\simpcomp_x^g:=\{s \in \simpcomp : g(s)\leqn x\}\}_{x\in\R^n}$.
Then, one has~\cite[Theorem 5.3]{lesnickTheoryInterleavingDistance2015}:
\begin{equation}\label{eq:stability}
	\disti(\Mbb(f),\Mbb(g))\leq \norm{f-g}_\infty.
\end{equation}

Another usual distance is the {\em bottleneck distance}~\cite[Section 2.3]{botnanAlgebraicStabilityZigzag2018}.
Intuitively, it relies on decompositions of the modules into direct sums of indecomposable summands,
and is defined as the largest interleaving
distance between summands that are matched under some matching. \\

\begin{definition}[Bottleneck distance] \label{def:bottleneck_distance}
	Given two multisets $A$ and $B$, $ \mu \colon A\not\to B $ is called a
	\rebuttal{{\em partial bijection}} if there exist $A' \subseteq A$ and $B'\subseteq B$ such that $\mu\colon A'\to B'$ is a bijection. The
	subset $A':=\mathrm{ coim}( \mu)$ (resp. $B':= \mathrm{ im}(\mu)$) is called the {\em coimage} (resp. {\em image}) of $\mu$.

	Let $\Mbb \cong \bigoplus_{i\in \mathcal{I}}\summand_i$ and $\Mbb' \cong \bigoplus_{j\in \mathcal{J}}\summand'_j$ be two multi-parameter persistence modules. %
	Given $\varepsilon\geq 0$, the modules $\Mbb$ and $\Mbb'$ are
	$\boldsymbol\varepsilon$-\emph{matched} if there exists a \rebuttal{partial bijection} $\mu \colon \mathcal{I}\not\to \mathcal{J}$ such that
	$M_i$ and $M'_{\mu(i)}$ are $ \boldsymbol\varepsilon$-interleaved for all
	$i\in\mathrm{ coim}(\mu)$, and $M_i$ (resp. $M'_j$) is
	$\boldsymbol\varepsilon$-interleaved with the null module {\bf 0} for all $i\in\mathcal{I}\backslash \mathrm{ coim}(\mu)$ (resp. $j\in \mathcal{J}\backslash \mathrm{ im}(\mu)$).

	The {\em bottleneck distance} %
	between two multi-parameter persistence modules $\Mbb$ and $\Mbb'$ is then defined as
	$
	\distb(\Mbb,\Mbb') := \inf \left\{ \varepsilon\ge 0 \,:\, \Mbb \text{ and } \Mbb' \text{ are  }\boldsymbol \varepsilon \text{-matched}\right\}. \\
	$
\end{definition}

Since a matching between the decompositions of two multi-parameter persistence modules induces an interleaving between
the modules themselves, it follows that $\disti\leq\distb$.
Note also that $\distb$ can actually be arbitrarily larger than $\disti$, as showcased in~\cite[Section 9]{botnanAlgebraicStabilityZigzag2018}.

\paragraph*{Interval modules.} %
\rebuttal{Now, we define a particular subfamily of multi-parameter persistence modules, the so-called {\em interval modules}.} Intuitively, they are modules that are trivial,
except on a subset of $\R^n$ called an {\em interval}. \\

\begin{definition}[Interval] \label{def:poset_interval}
	A subset $I$ of $\R^n$ is called an \emph{interval} %
	if it satisfies: %
	\begin{itemize}
		\item {\em (convexity)} if $p,q\in I$ and $p\leqn r \leqn q$ then $r\in I$, and
		\item {\em (connectivity)} if $p,q \in I$, then there exists a finite sequence $r_1,r_2,\dots,r_m \in I,$ for some $ m\in \N$,
			such that $p \sim r_1 \sim r_2 \sim \dots \sim r_m \sim q$, where $\sim$ can be either $\leqn$ or $\geqn$. \\
	\end{itemize}
\end{definition}

\begin{definition}[Interval module, indicator module]\label{def:interval_module_multi}
	An {\em interval module} $\Ibb$ is a multi-parameter persistence module such that:
	\begin{enumerate}
		\item $\Ibb$ is a thin module, i.e., $\forall  x \in \R^n, \, \dimension{\Ibb_x} \leq 1$,
		\item whose support $\supp{\Ibb} := \{x \in \R^n : \dimension{\Ibb_x} = 1\}$ is an interval of $\Rbb^n$,
		\item and whose transition maps are identity maps, i.e., $ \forall x\le_n y \in \supp{\Ibb}, \, \Ibb(x\le y) = \mathrm{id}$.
	\end{enumerate}

	Moreover, given an interval $I$, we let $\field^I$ denote the corresponding interval module with support $I$.
	Finally, a module $\Ibb$ that is a direct sum of interval modules $\Ibb =
	k^{I_1}\oplus\dots\oplus k^{I_m}$, $m\in\N^*$, whose supports have empty
	pairwise (closed) intersections, i.e., such that $I_p\cap I_q = \varnothing$ for all $1\leq p,q \leq m$, $p\neq q$, is called an \emph{indicator module}, and denoted by $k^U$,
	where $U:=\cup_{i=1}^m I_i$ is the union of their supports.\\
\end{definition}

Finally, we define a specific type of interval modules, those whose support is
equal to a union of rectangles. We call these modules {\em discretely
presented}---the \mmaout{} decompositions computed by our algorithm \MMA{} (Section~\ref{sec:candidates}) are actually made up of such modules. \\

\begin{definition}[Discretely presented interval module]\label{def:discretely_presented}
	An interval module $\Ibb = k^I$ is \emph{discretely presented} if its
	support $I$ is a locally finite union of rectangles in $\overline\R^n$,
	and whose boundary is an $(n-1)$-submanifold of $\R^n$.
	More precisely,
	there exist two locally finite families of points, the {\em birth} and
	{\em death critical points} of $I$, denoted by $\birthcp{I}$ and
	$\deathcp{I}$ respectively, such that:
	\begin{equation}\label{eq:discrete_inter}
		I:=\bigcup_{c\in \birthcp{I}}\bigcup_{c'\in \deathcp{I}} \rectangle{c}{c'},
	\end{equation}
	where $R_{c,c'}:=\{x\in(\R\cup\{\pm\infty\})^n : c\leqn x {\rebuttal <_{n}} c'\}$ is the rectangle %
	with %
	{\em corners} $c$ and $c'$. \\ %
\end{definition}

\rebuttal{
	Discretely presented interval modules can be obtained through stronger assumptions, such as
	being a \emph{finitely presentable} interval module. See \cite{botnanIntroductionMultiparameterPersistence2023} for more details. \\

	\begin{definition}[Finitely presentable persistence module.]\label{def:fp}
		Let $\Mbb$ be an $n$-parameter persistence module. One says that
		$\Mbb$
		is \emph{finitely presentable (f.p.)}
		if it is isomorphic to the cokernel of a persistence morphism $f\colon R\to G$, where $G,R$ are interval decomposable modules of the form:
		\begin{equation}
			G = \bigoplus_{1\le i \le n_G} k^{ \left< x_i \right<}
			\quad \textnormal{ and }\quad
			R = \bigoplus_{1\le i \le n_R} k^{ \left< y_i \right<},
		\end{equation}
		for some finite sets $\beta_0 := (x_i)_{1\le i \le n_G}$ and $\beta_1 :=(y_i)_{1\le i\le n_R}$, with $n_G,n_r\in\N$.
		When $n_G$ and $n_R$ are minimal, the sets $\beta_0(\Mbb)$ and
		$\beta_1(\Mbb)$ are unique
		and called the degree 0 and degree 1 \emph{graded Betti numbers} of $\Mbb$, respectively.  \\
	\end{definition}

	The graded Betti numbers characterize the position of topological events in a multi-parameter filtration. %
	In particular, f.p. modules can be restricted to \emph{large enough} compact
	sets without loosing information, if such sets contain the graded Betti numbers, see \cref{rmk:compacity_assumption}. Next, we define {\em restrictions of modules}. \\
}

\rebuttal{
	\begin{definition}[Restrictions and slices]\label{remark:restrictions_and_slices}
		If  $Q\subseteq \R^n$, then an $n$-parameter persistence module $\Mbb=\{M_x\}_{x\in\R^n}$ induces a persistence module $\restr \Mbb Q$ indexed over $Q$, defined by:
		\begin{equation*}
			\forall x\leqn y \in Q, \quad \left(\restr \Mbb {Q}\right)_x := M_x \textnormal{
			and } \restr \Mbb Q (x \leqn y) := \Mbb(x\leqn y).
		\end{equation*}
	\end{definition}

	In particular, if $\Mbb$ is an $n$-parameter persistence module, then given a
	line $l\subseteq \R^n$ with positive slope, the persistence module $\restr \Mbb l$ can be seen as a usual 1-parameter persistence module \emph{up to some parametrization of $l$}.
	\\

	\begin{example}[Restriction to a line]\label{ex:restr_line_mp}
		Let $\Mbb=\{M_x\}_{x\in\R^n}$ be an $n$-parameter persistence module, and $l\subseteq \R^n$ be a line
		parametrized by $t\in \R\mapsto at+b$ with $a\in (\R_+)^n
		\backslash \left\{ 0 \right\} $.
		The restricted module $\restr \Mbb l$ given by:
		\begin{equation}
			\left( \restr \Mbb l \right) _t := M_{at+b} \quad \textnormal{and }\quad
			\left( \restr \Mbb l \right)(s\le t) := \Mbb(as+b \le at+b),
		\end{equation}
		is a $1$-parameter module, called the {\em restriction, or slice, of $\Mbb$ along $l$}.  \\
	\end{example}

	The opposite operation is called a (left) Kan extension, that we only define in
	our setup. \\

	\begin{definition}[Kan Extension]\label{def:kan_extension}
		Let $A = A_1\times \cdots \times A_n \subseteq \R^n$ be a poset of $\R^n$ obtained as a
		product of subsets of $\R$, and $\Mbb=\{M_x\}_{x\in A}$ be a multi-parameter persistence module indexed over $A$.
		The \emph{(left) Kan extensions} of $\Mbb$  is the
		$n$-parameter persistence module (indexed over $\R^n$)
		defined as follows:
		for any $x\leqn y \in \R^n$,
		\begin{equation}
			\left( \mathrm{Lan}_n \Mbb \right)_x = M_{\lfloor x\rfloor_A}
			\quad
			\textnormal{and}
			\quad
			\mathrm{Lan}_n \Mbb (x\leqn y) = \Mbb(\lfloor x\rfloor_A\leqn \lfloor y\rfloor_A),
		\end{equation}
		where $\lfloor x \rfloor_A := \max \left\{ g\in A \,:\, g \le x \right\} $, with the conventions $\max (\varnothing) = -\boldsymbol \infty$, and $\Mbb_{-\boldsymbol\infty} = \boldsymbol 0$.
		\\
	\end{definition}

	\begin{remark}\label{rmk:compacity_assumption}
		If the graded Betti numbers of $\Mbb$ are included in a rectangle subset $K = [a_1,b_1]\times \cdots \times [a_n,b_n]$ of $\R^n$,
		then the restriction $\restr \Mbb K$ contains as much information as the original module $\Mbb$; more formally, we have $\mathrm {Lan}_n \restr \Mbb K \simeq \Mbb$.
		Notice that, in practice, finding such a compact set $K$ for
		a given persistence module $\Mbb$
		does not require computing a minimal presentation of $\Mbb$
		(\cref{def:fp}). Indeed,
		if $\Mbb$ is obtained from applying the homology functor on a finite multi-parameter filtration $F$, i.e., $\Mbb \simeq H_*(F)$, then
		any rectangle $R\subseteq \R^n$ containing
		the filtration values of the simplices of $F$ will also contain
		the graded Betti numbers.
		\\
	\end{remark}

}

Note that when only one filtration is given, single-parameter \rebuttal{p.f.d.} persistence
modules (\cref{def:pfd}) always decompose into interval modules:
$\Mbb\cong\bigoplus_{i \in
\mathcal I} \induced{[b_i,d_i)}$~\cite[Theorem 2.8]{Chazal2016}. In that case,
they are frequently represented as the collection of the supports (in $\R$) of
their summands, also called {\em persistence barcode}
$\barcode{\Mbb}:=\{[b_i,d_i)\}_{i \in \mathcal I}$.

\paragraph*{Boundaries and facets of intervals}\label{app:defin}
Now, we recall the definition of {\em upper- and lower-boundaries} of interval modules, as well as their so-called {\em facets},
which are convenient characterizations of the interval supports. \\

\begin{definition}[Upper- and lower-boundaries]\label{def:upper_lower_boundaries}
	Given an interval $I\subset \R^n$, its \emph{upper-boundary} $\upperb{I}$ and \emph{lower-boundary} $\lowerb{I}$ are defined as:
	\begin{equation*}
		\lowerb{I}:= \left\{ x \in \bar I: \forall y\in\R^n,\, y \sln x \Rightarrow y \not\in I  \right\},\ \
		\upperb{I}:= \left\{ x \in \bar I: \forall y\in\R^n,\, y \sgn x \Rightarrow y \not\in I  \right\}.
	\end{equation*}
	Moreover, the boundary of ${I}$ can be decomposed with $\partial {I}=\lowerb{I} \cup \upperb{I}$.
	See Figure~\ref{fig:liui} for an illustration. \\
\end{definition}

\svg{lower-upper_boundary}{0.3}{\label{fig:liui} Lower- and upper-boundaries of
an interval in $\R^2$ (Definition~\ref{def:upper_lower_boundaries}); and birthpoints and deathpoints $b_x^I$ and $d_x^I$  (Definition~\ref{def:birthpoints_deathpoints}) of a point $x \in \R^2$.}

When interval modules are discretely presented (see Definition~\ref{def:discretely_presented}), their lower- and upper-boundaries are made of flat parts, which are the faces
of the corresponding rectangles forming the interval. Hence, we call {\em facets}
the subsets of the lower- and upper-boundaries that are included in some hyperplanes of $\R^n$. \\ %

\begin{definition}[Facet]\label{def:facet}
	A \emph{lower} (resp. \emph{upper}) \emph{facet} of an interval $I\subset \R^n$ is an $(n-1)$-submanifold of $\partial \supp{I}$ written as
	$
	\left\{ x \in \R^n \,:\, x_i = c\right\} \cap \lowerb{I}$ (resp. $\left\{ x \in \R^n \,:\, x_i = c\right\} \cap \upperb{I}$)
	for some $c \in \R$ and some dimension $i \in \llbracket 1, n \rrbracket$ that is called the facet \emph{codirection}.
	In particular, the upper- and lower-boundaries of a discretely presented interval module is a locally finite union of facets.
\end{definition}

\paragraph*{Fibered barcode.} The {\em fibered barcode}~\cite{cerriBettiNumbersMultidimensional2013,lesnickInteractiveVisualization2D2015} is a centerpiece of our \MMA{} algorithm,
and is defined, given a p.f.d. multi-parameter persistence module $\Mbb$,
as a map that takes as input a line (or segment) $l$ in $\R^n$,
and outputs the persistence barcode
associated to the single-parameter persistence module obtained by restricting $\Mbb$ along $l$.
We formalize these concepts in the next definition.
\\

\begin{definition}[\rebuttal{Diagonal} fibered barcode]\label{def:barcode}
	Let $\Mbb$ %
	be a \rebuttal{p.f.d.} multi-parameter persistence module.
	Given a \rebuttal{set  $L$ of diagonal lines in $\mathbb{R}^n$} (i.e., lines
	with direction vector $[1,\dots,1]\in\R^n$), we let the {\em $L$-fibered barcode} (or {\em fibered barcode} for short when $L$ is clear) be
	the family of barcodes associated to restrictions of the module along lines in
	$L$, i.e.,
	$\fibered{\Mbb}_L = \rebuttal{\left( \barcode {\restr \Mbb l}  \right)_{l\in L}}$. \\
\end{definition}

\begin{remark}\label{rmk:barcode_indicator}
	Recall from~\cite[Theorem 2.8]{Chazal2016} %
	that
	$\supp{\restr{\Mbb}{l}}$ %
	is a \rebuttal{multi}set of bars called persistence barcode:
	$\supp{\restr \Mbb l} = \barcode{\restr \Mbb l} = \{[b_i,d_i)\}_{i\in \mathcal I(l)}$, where $\mathcal I (l)$
	is an index set that depends on $\Mbb$ and $l$.
	Moreover, when $\Mbb
	= \bigoplus_{i\in \mathcal  I} \induced{\interval_i}$ is decomposable into
	interval modules, there are as many bars in the barcode as there are interval
	summands intersecting the line $l$: $|\barcode{\restr \Mbb l}| =
	|\{\interval_i : \interval_i\cap l\neq\varnothing\}|$. \\ %
\end{remark}

It is also useful to characterize the fibered barcode with {\em endpoints} of lines. \\

\begin{definition}[Birthpoint, Deathpoint]\label{def:birthpoints_deathpoints}
	Given a positive line $l$ (that is, a line whose direction vector $u$ is in $\R^n_+ \setminus \left\{ 0  \right\}$)
	and an interval $I\subseteq \R^n$, the \emph{birthpoint} (resp. \emph{deathpoint}) of $I$ along $l$ is:
	\begin{equation*}
		b^I_l := \inf \, l \cap I , \quad \text{resp. } d^I_l := \sup \, l \cap I.\footnote{
			Note that this is well-defined and finite as the restriction of the poset $(\R^n,\leqn)$ to a positive line is totally ordered.
		}
	\end{equation*}
	If the direction $u \in \R_+^n$ of the line is given by the context, and $x \in \R^n$, we will also let $b^I_x := b_{l_x}^I$ (resp. $d^I_x := d_{l_x}^I$)
	denote the \emph{birthpoint} (resp. \emph{deathpoint}) associated to $I$ and $x$, where $l_x$ is the line crossing $x$, with direction vector $u$.
	See \cref{fig:liui}.\\
\end{definition}

\begin{remark}[Slicing an interval decomposable module]\label{rem:module_slice}
	Using birthpoints and deathpoints, the $L$-fibered barcode of an interval
	decomposable multi-parameter persistence module
	$\Mbb = \bigoplus_{i\in \mathcal  I} \induced{\interval_i}$
	can be written as:
	\begin{equation}
		\fibered{\Mbb}_L = \left( \barcode{\restr \Mbb l}\right)_{l\in L}=\left(\{[b^{\interval_i}_l,d^{\interval_i}_l)\}_{i\in\mathcal I}\right)_{l\in L}.
	\end{equation}
\end{remark}

\begin{remark}
	Notice that in \cref{rem:module_slice}, the persistence barcodes $\barcode{\restr \Mbb l} =
	\{[b^{\interval_i}_l,d^{\interval_i}_l)\}_{i\in\mathcal I}$
	can be seen as multisets of segments $[b_i,d_i)$ in $\R^n\cup \left\{
	\boldsymbol\infty{} \right\} $.
	In particular, the diagonal line of a given segment $[b_i,d_i)$ can be
	recovered from, for instance, the birthpoint $b_i$, and hence, without
	loosing any information (except for lines with trivial barcodes), we will consider the following identification:
	\begin{equation}
		\fibered{\Mbb}_L = \bigcup_{l\in L} \barcode{\restr \Mbb l}
		= \left\{ \barcode{\restr \Mbb l} : l \in L \right\}.
	\end{equation}
\end{remark}

\paragraph*{Geometry and stability of diagonal barcodes.}

\rebuttal{We now present two simple yet fundamental results on diagonal barcodes. The first one characterizes rectangles formed by endpoints.} \\

\begin{lemma}\label{rem:flat_rect}
	Let $l_1,l_2$ be two diagonal lines and $\rebuttal{\field^{I}}$ be a \rebuttal{f.p.}
	interval module such that the barcodes
	$\barcode{\restr{\rebuttal{\field^{I}}}{l_1}}$ and
	$\barcode{\restr{\rebuttal{\field^{I}}}{l_2}}$ are not empty.
	Let $\barcode{\restr{\rebuttal{\field^{I}}}{l_1}}=[b_1,d_1)$ and
	$\barcode{\restr{\rebuttal{\field^{I}}}{l_2}}=[b_2,d_2)$.
	Then,
	the rectangles $R_{b_1,b_2}$, $R_{b_2,b_1}$, $R_{d_1,d_2}$ and
	$R_{d_2,d_1}$ are flat,
	that is, they either have null volume, or their corners are not comparable.
\end{lemma}

\begin{proof}
	This lemma is a simple consequence of the persistence module definition:
	if	$b_1$ and $b_2$ were comparable (as in Figure~\ref{fig:flat_rect}), then the rectangle
	$\rectangle{b_1}{b_2}$ %
	would not be trivial, and
	$b_2$ %
	would not be a birthpoint %
	since it would be possible to find a smaller birthpoint $\tilde b_2 \leqn b_2$ %
	w.r.t. the partial order of $\R^n$ along the diagonal
	line passing through $b_2$. %
	A similar argument holds for $d_1$ and $d_2$.
	See Figure~\ref{fig:flat_rect}.

	\svg{interval_vs_rectangle}{0.4}{\label{fig:flat_rect} Two bars $[b_1,d_1)$ and $[b_2,d_2)$ of an interval module.
	}
\end{proof}

We now show that endpoints of bars in barcodes associated to lines that are close should also be close.
In other words, bars of the fibered barcode that are associated to lines that are close to each other
must have similar length,
as stated in the lemma below; see also~\cite[Lemma 2]{landiRankInvariantStability2018}. \\

\begin{lemma}\label{lemma:compatible_close}
	Let $\rebuttal{k^I}$ be a \rebuttal{f.p.} interval module, let $l_1,l_2\subseteq\R^n$ be two diagonal lines and
	let $\ora u\in \R^n$ be a positive or negative vector
	such that $l_2 = l_1 + \ora u$.
	Then, the following properties hold:
	\begin{enumerate}[label=\textnormal{(\textit{\roman*})}]
		\item If the barcode
			$\barcode{\restr{\rebuttal{\field^I}}{l_1}}=\{[b^I_{l_1}, d^I_{l_1})\}$
			is not empty and satisfies $\norm{d^I_{l_1} - b^I_{l_1}}_\infty >
			\norm{\ora u}_\infty$, then the barcode
			$\barcode{\restr{\rebuttal{\field^I}}{l_2}}$ is not empty as well, and
		\item	If the barcodes $\barcode{\restr{\rebuttal{\field^I}}{l_1}}$ and
			$\barcode{\restr{\rebuttal{\field^I}}{l_2}}$ are not empty,
			then one has
			\begin{equation*}
				\norm{ d^I_{l_1}-d^I_{l_2}}_{\infty} \le \norm{ \ora u}_\infty \text{ and } \norm{ b^I_{l_1}-b^I_{l_2}}_\infty \le \norm{ \ora u}_\infty,
			\end{equation*}
			where we used the conventions
			$(+\boldsymbol\infty) - (+\boldsymbol\infty) = (-\boldsymbol\infty) - (-\boldsymbol\infty)  = 0 $.
	\end{enumerate}
\end{lemma}

\begin{proof}

	\noindent {\bf Item $(i)$.} Since $|(d^I_{l_1})_i - (b^I_{l_1})_i| =
	\norm{d^I_{l_1} - b^I_{l_1}}_\infty > \norm{\ora u }_\infty$ for any index
	$i\in\llbracket 1,n \rrbracket$, it follows that $b^I_{l_1}\leqn
	b^I_{l_1}+\ora u \leqn d^I_{l_1}$.\footnote{We assume here that
	$\ora u$ is positive. It should be replaced by $d_{l_1}^I - \ora u$ if it is negative.} Thus $b^I_{l_1}+\ora u$ must belong to ${I}$ since $I$ is
	an interval. Hence, since $b^I_{l_1}+\ora u \in l_2$, one has
	$\barcode{\restr{\rebuttal{\induced I}}{l_2}} = {I}\cap l_2 \neq\varnothing$.

	\noindent {\bf Item $(ii)$.} If one of the endpoints is infinite, the result
	holds trivially as the other endpoint has to be infinite too, so we now assume that the endpoints of the bars are all finite.
	Without loss of generality, %
	assume that $l_2 = l_1 + \ora u$ where $\ora u$ is positive.
	Now, since both $d_{l_2}^I$ and $d_{l_1}^I + \ora u$ belong to $l_2$, they are comparable, so one has either $d_{l_2}^I \sgn d_{l_1}^I + \ora u$ or $d_{l_2}^I \leqn d_{l_1}^I + \ora u$.
	However, the first possibility would lead to  $d_{l_2}^I \sgn d_{l_1}^I + \ora u \sgn d_{l_1}^I$, hence $d_{l_1}^I$ and $d_{l_2}^I$ would be (strictly) comparable in $\R^n$,
	which contradicts Lemma~\ref{rem:flat_rect}.
	Thus, one must have $d_{l_2}^I \leqn d_{l_1}^I + \ora u$.
	Furthermore, and using the exact same arguments, $d^I_{l_2} - \ora u + \norm{ \ora u}_{\infty} \cdot \boldsymbol 1$ is on $l_1$, and one must have
	$d_{l_2}^I - \ora u +\norm{ \ora u}_{\infty} \cdot \boldsymbol 1 \geqn d_{l_1}^I$. %
	Finally, by combining the two previous inequalities, one has:
	\begin{equation*}
		d_{l_1}^I - \norm{ \ora u}_{\infty} \cdot \textbf{1}
		\leqn d_{l_1}^I + \ora u - \norm{ \ora u}_{\infty} \cdot \textbf{1}
		\leqn
		d_{l_2}^I
		\leqn
		d_{l_1}^I + \ora u
		\leqn
		d_{l_1}^I + \norm{ \ora u}_{\infty} \cdot \textbf{1},
	\end{equation*}
	which leads to the result for deathpoints. The proof extends straightforwardly to birthpoints. %
\end{proof}

\section{\rebuttal{Computing \mmaout{} decompositions with the \MMA{} algorithm}}
\label{sec:candidates}

In this section,  we present our 
family of \rebuttal{descriptors} for multi-parameter persistence modules,
defined as {\em \mmaout{} decompositions} into interval summands 
\rebuttal{and we identify a specific subfamily that we call {\em \mmaoutcompat{} decompositions} (\cref{def:candidate}), in Section~\ref{subsec:approx}.}
Then, we show how practical computations of instances of such \mmaout{} decompositions can be done with our \MMA{} algorithm in Sections~\ref{subsec:motivation} and~\ref{subsec:Algorithms}.

\subsection{Candidate and \mmaoutcompat{} decompositions} \label{subsec:approx}
Our candidate decompositions depend on
{\em $\delta$-grids of lines}, that we now define. \\

\begin{definition}[$\delta$-grid of lines]\label{def:grid}
	Let $K\subset\R^n$ be a compact set and $\delta > 0$. 
	The $\delta$-grid of lines associated to $K$, denoted as $L_\delta(K)$, is a family of diagonal lines evenly sampled in $K$:
	$$L_\delta(K):=\{l_{\delta\cdot u}: u \in \Z^n \text{ and } l_{\delta\cdot
	u}\cap K \neq \varnothing\},$$
	where $l_{\delta\cdot u}:=\delta \cdot
	u+\boldsymbol{e}_\Delta\rebuttal{\mathbb R}$ is the diagonal line with
	direction vector $\boldsymbol{e}_\Delta=[1,\dots,1]^T\in\R^n$ passing through
	$\delta\cdot u$. \\
\end{definition}

Several new definitions can be introduced from grids of lines, which will turn useful either in the definition of our \MMA{} algorithm, or in the corresponding theoretical proofs. \\

\begin{definition}[$\delta$-regularly distributed lines filling a compact set]\label{def:regularly_distributed}
	Let $L$ be a set of diagonal lines in $\R^n$ and $K\subseteq \R^n$ be a compact set.
	Then, we say that:
	\begin{enumerate}
		\item Two diagonal lines $l, l' \in L$ are $\delta$-\emph{consecutive} (or {\em consecutive} when $\delta$ is clear) if there exists ${\ora u} \in \left\{ 0,1\right\}^n \backslash \left\{ \mathbf{0},\mathbf{1}\right\}$
		such that $l' = l \pm \delta\cdot {\ora u}$. %
		\item Two diagonal lines $l, l' \in L$ are
		$\delta$-\emph{comparable} if there exists a positive or negative vector $\ora u\in\R^n$ 
		with $\norm{\ora u}_\infty \le \delta$ such
		that $l' = l + \ora u$.
		If $\ora u$ is positive (resp. negative), we write $l'\ge l$ (resp. $l'\le l$).
		\item $L$ is $\delta$-\emph{regularly distributed} if, for any pair of lines $(l,l')\in L$, there exists a sequence of $\delta$-consecutive lines $\{l_1, \dots, l_k\}$ in $L$ such that $l = l_1$ and $l'=l_k$.
		\item For a given line $l$ in a $\delta$-regularly distributed family of lines $L$, we call
		$L_l:=L\cap\{l+\delta\cdot \ora u: \ora u\in \{0,1\}^{n-1}\times \{0\}\}$
		the {\em $L$-surrounding set} of $l$. In particular, one has $|L_l| \le 2^{n-1}$.
		\item $L$ $\delta$-\emph{fills} $K$ (or {\em fills} $K$ when $\delta$ is clear)
		if any point of $K$ is at distance at most $ \delta/2$ from some line in $L$.
		In other words, $K$ is included in the offset $L^{\delta/2}$. \\ %
	\end{enumerate}
\end{definition}

Our candidate decompositions \rebuttal{of a given multi-parameter persistence module $\Mbb$} are,
roughly speaking, interval decomposable modules with fibered barcodes \rebuttal{containing the one of}
$\Mbb$ on a $\delta$-grid of lines. %
\rebuttal{
	Before going into the definition of our estimator \mmaout, we introduce a
	compactness assumption, that directly follows \cref{rmk:compacity_assumption}.\\

	\begin{definition}[Module compactness]
		We say that a rectangle ${K = [a_1, b_1] \times \cdots\times [a_n,b_n]\subseteq \R^n}$
		{\em \compacityassumption{}} 
		an $n$-parameter persistence module $\Mbb$
		if restricting $\Mbb$ to $K$ preserve information, or, more formally, if
		$\Lan \restr \Mbb K \simeq \Mbb$.
		\\
	\end{definition}
}
\begin{definition}[Candidate and \mmaoutcompat{} decompositions]\label{def:candidate}
Let $\Mbb$
	be a \rebuttal{f.p.} $n$-parameter persistence module. Let $K$ be a compact set that  
	\rebuttal{\compacityassumption{} $\Mbb$,}
and $L:=L_\delta(K^\delta)$ be the $\delta$-grid of lines of the offset $K^\delta=\{x\in\R^n : d_\infty(x,K) \leq \delta\}$, where $d_\infty$ stands for the $\norm{\cdot}_\infty$ distance.
A %
multi-parameter persistence module $\tilde\Mbb_\delta$
is called a $\delta$-\emph{\mmaout{} decomposition} of $\Mbb$ if:
\begin{enumerate}
	\item[$(i)$] $\tilde\Mbb_\delta$ is interval decomposable: $\tilde\Mbb_\delta=\bigoplus_{i \in {\tilde{\mathcal{I}}}} \rebuttal{k^{\tilde \interval_i}} $, and
	\item[$(ii)$] \rebuttal{$\barcode{\restr{\Mbb}{l}}\subseteq\barcode{\restr{\tilde \Mbb_\delta}{l}}$ for any $l\in L$, i.e., the $L$-fibered barcode of $\Mbb$, seen as a multiset of segments in $\R^n$, is included in the one of $\tilde\Mbb_\delta$.}
\end{enumerate}
\rebuttal{Clearly, a candidate decomposition can be a rough descriptor of
	$\Mbb$, as the bars in its fibered barcode can be arbitrarily large. Hence,
	we identify a more stable subfamily of \mmaout{} decompositions:}	
\begin{enumerate}
	\item[$(iii)$] If $\distb\left(\barcode{\restr{\Mbb}{l}},\barcode{\restr{\tilde \Mbb_\delta}{l}}\right)\leq 2\delta$ for {\em any} diagonal line $l$ (not only those that belong to $L$), then $\tilde \Mbb_\delta$ is called an {\em \mmaoutcompat{}} decomposition of $\Mbb$. \\
\end{enumerate} 
\end{definition}

\rebuttal{The reason we focus on preserving the diagonal fibered barcodes (instead of controlling, e.g., the rank invariant or the interleaving distance to $\Mbb$) is because of the impossibility for general multi-parameter persistence modules to build a decomposition into indicator modules that is consistent with the rank invariant (see~\cite[Section 10.2.3]{lesnickNotesMultiparameterPersistence2023}). Note however that this is still stronger than preserving the Hilbert function, i.e., the pointwise dimension of the module.} \\

\rebuttal{
	\begin{remark}[Non-diagonal lines]
		Extending the definition of our \mmaout{} decompositions to grids of non-diagonal lines (i.e., with direction vector different than $\boldsymbol{e}_\Delta$) is straightforward, and is completely equivalent to rescaling %
		the filtrations.
		Using such non-diagonal grids will however produce less stable descriptors, as the interleaving distance (\cref{def:interleaving_distance}) is based on the diagonal direction. \\
	\end{remark}
}

\begin{remark}\label{rmk:grid}
	One can check that the $\delta$-grid of lines of associated to $K^\delta$ used in
	Definition~\ref{def:candidate}
	is $\delta$-regularly distributed and $\delta$-fills $K$. \\
\end{remark}

\rebuttal{
Finally, we introduce the definition of {\em matching functions}. Such functions play a key role in our \MMA{} algorithm for computing \mmaout{} decompositions. \\

	\begin{definition}[Matching function]\label{def:barcode_matching}
		Let $\Mbb$ be a f.p. $n$-parameter persistence module, and $l, l'\subseteq \R^n$ be two positive lines. %
		A map $\matching$ between the persistence barcodes:
		\begin{equation*}
			\matching\colon
			\barcode{\restr{\Mbb}{l}}
			\rightarrow
			\barcode{\restr{\Mbb}{l'}}
			\cup \left\{ \varnothing\right\}
		\end{equation*}
		is called an ($\Mbb$-)\emph{matching function} between $l$ and $l'$ if the restriction of $\matching$ to $\matching^{-1}(\barcode{\restr{\Mbb}{l'}})$ is injective. %
		In other words, $\matching$ is a partial bijection (\cref{def:bottleneck_distance}) between the two barcodes, seen as multisets of intervals. \\
	\end{definition}

	\begin{definition}[Induced matching functions]
		\label{ex:exact_matching}
		If $\Mbb = \bigoplus_{i\in \mathcal I}k^{I_i}$ is a f.p. interval decomposable
		module, 
		then, for any positive line $l$,
		the bars of any barcode $\barcode{\restr \Mbb {l}} \cong \bigoplus_{ i \in \mathcal{I}} \restr
		{k^{{I_i}}} l$ can be indexed using $\mathcal{I}$ (by also counting empty bars).
		Thus, given any two positive lines $l_1,l_2$, one can match the bars $\restr {k^{I_i}}
		{l_1}$ and $\restr {k^{{I_i}}} {l_2}$ together so that matched bars
		correspond to the same underlying interval summand of $\Mbb$.
		In that case, the corresponding matching function $\sigma_{\Mbb}$ is referred to as \emph{induced from $\Mbb$}.
	\end{definition}
	
}

\subsection{Motivation for the \MMA{} algorithm} \label{subsec:motivation}

Our \MMA{} algorithm can be roughly described as a method that constructs interval summands based on 
families of bars (coming from the fibered barcode) that have been matched together using some 
matching function. 
The goal of this section is to frame the general question of practically computing \mmaout{} decompositions of a multi-parameter persistence module
from its fibered barcode and a matching function. There are many ways of doing so, but the most natural ones are not necessarily 
the easiest computable ones.
For the sake of simplicity, let us leave the problem of finding proper matching functions aside for now (which we will discuss in more details in Section~\ref{sec:matching_mma}), and assume that the underlying module is a single 
interval module $\Mbb=\Ibb$. %
Since interval modules are characterized by their supports, the goal is to recover $\supp{\Ibb}$.
Moreover, if $\Ibb$ is discretely presented, 
only the 
facets and critical points (i.e., points where several facets intersect) of $\supp{\Ibb}$ %
need to be captured or approximated. %
There are many different ways, for a given interval module $\Ibb$, to define candidate critical points, that we call {\em corners}, 
using the endpoints of 
its fibered barcode, e.g., 
by using the minimum and maximum of consecutive endpoint coordinates. 
Hence, it is natural
to find a \mmaout{} decomposition
(or candidate interval in this case, since there is just one interval summand) 
$\tilde \Ibb$ with
{\em model selection}, i.e., by minimizing
some penalty cost 
$ \mathrm{pen}\colon S \to \mathbb R_+$,
where $S$ is the set of discretely presented interval modules having the same 
fibered barcode than $\Ibb$,
or a subset thereof. See Figure~\ref{fig:candidate_argmin} for examples of sets $S$ and corresponding candidate intervals.
This penalty would forbid, e.g., overly complicated intervals that have lots of corners.
For instance, 
minimizing the penalty:
\begin{equation}\label{eq:penalty}
	\mathrm{pen}: \tilde \Ibb \mapsto  \# \text{corners of } \supp{\tilde \Ibb},
\end{equation} 
would provide a sparse approximation of $\Ibb$.
Actually,
when one assumes 
that the underlying interval module $\Ibb$ %
is discretely presented 
with facets that are large enough with respect to the family of lines $L$ of the fibered barcode, 
the target $\Ibb$ minimizes penalty~(\ref{eq:penalty}). %
Indeed, 
as all the facets of $\Ibb$ are detected by some endpoints of the fibered barcode by assumption, any candidate approximation $\tilde \Ibb$ of $\Ibb$ %
has at least the same number of facets than $\Ibb$, i.e., $\mathrm{pen}(\Ibb)\leq \mathrm{pen}(\tilde \Ibb)$ for any \mmaout{} $\tilde \Ibb$.

\svg{candidate_argmin}{0.9}{
\label{fig:candidate_argmin}
Example of \mmaout{} decompositions for a $2$-interval module $\Ibb$ with support in $\R^2$. 
\textbf{(Left)} Given the $L$-fibered barcode of $\Ibb$, where $L$ is the family of the four black lines, we want to approximate $\Ibb$ with an element of $S$, i.e.,
an interval module with the same fibered barcode. 
\textbf{(Middle)} When one further constrains the set $S$ %
by asking to have at most one corner between two consecutive endpoints of the fibered barcode, the whole set $S$ can be computed explicitly. 
\textbf{(Right)} The set $S$ can also be described as the set of intervals which have to go through the blue path, and which can arbitrarily
choose between the red or green path at three different locations. Hence, the cardinality of $S$ is $2^3$.
}

For interval modules, $S$ is generally a set of cardinal $c^d$, where $c$ is the number of candidate corners between 
birthpoints or deathpoints, and $d$ is the number of corners.  %
For instance, in Figure~\ref{fig:candidate_argmin}, one has $n=2$, $c=2$ and $d=3$. Unfortunately, $c$ is of the order of $2^{n-1}$, and thus grows exponentially with the dimension $n$, and $d$ is difficult to control in practice, since 
it heavily depends on the number of lines in the fibered barcode and the regularity of the underlying interval module $\Ibb$. 
Minimizing a penalty over $S$ 
is thus practical only for low dimension $n$ and small number of lines in the fibered barcode. Hence, our algorithm \MMA{} presented
in Section~\ref{subsec:Algorithms} does not use penalty minimization,
but is rather defined with natural and simple corner choices. \\

\begin{remark} 
Note also that there are cases when the corner choices are canonical. For instance, any $2$-persistence module $\Mbb$ with transition maps $\varphi_\cdot^\cdot$ that are  
{\em weakly exact}, i.e., that satisfy, for any
$x \le y$:
\begin{equation*}
	\mathrm{ im}\left( \varphi_{x}^y\right)  = \mathrm{ im}\left( \varphi_{(y_1,x_2)}^y\right) \cap \mathrm{ im}\left( \varphi_{(x_1,y_2)}^y\right) \text{    and    }
	\ker\left( \varphi_{x}^y\right) = \ker\left( \varphi_x^{(y_1,x_2)}\right) + \ker\left( \varphi_x^{(x_1,y_2)}\right),
\end{equation*}
is rectangle decomposable~\cite{botnanRectangleDecomposable2ParameterPersistence2022}.
Hence,
a canonical approximation of a summand $\Ibb$ of $\Mbb$ is given by the interval module whose support is the rectangle with corners $(\min_l (b^I_l)_1, \min_l (b^I_l)_2)$ and $(\max_l (b^I_l)_1, \max_l (b^I_l)_2)$,
where $l$ goes through the family of lines $L$ of the fibered barcode.
\end{remark}

\subsection{The \MMA{} algorithm for computing candidate decompositions}\label{subsec:Algorithms}

In this section, we introduce \MMA: a fast algorithm for computing $\delta$-\mmaout{} decompositions. The pseudo-code for \MMA{} is provided in Algorithm~\ref{algo:approx}.
Roughly speaking, given a \rebuttal{f.p. $n$-parameter persistence module $\Mbb$, an approximation parameter $\delta >0$, a $\delta$-grid of lines $L=L_\delta(K^\delta)$ where $K$
\compacityassumption{} $\Mbb$}, and a matching function $\sigma$
(see Section~\ref{sec:matching_mma} for a discussion about how to find such matching functions), Algorithm~\ref{algo:approx} works in three steps: \\

\begin{itemize}[left=1.5cm]
	\item[Step 1:] compute the $L$-fibered barcode of $\Mbb$,
	\item[Step 2:] match together bars
		using the matching function $\sigma$, %
	\item[Step 3:] for each summand, use the endpoints of the corresponding bars to compute estimates of the critical points, using Algorithm~\ref{algo:approx_inter}. \\
\end{itemize}

Step 1 can be performed using any persistent homology software (such as, e.g., \texttt{Gudhi}, \texttt{Ripser}, \texttt{Phat}, etc),
or with \texttt{Rivet}~\cite{lesnickInteractiveVisualization2D2015} when $n=2$. %
Our code is part of the \texttt{multipers} library \cite{loiseauxMultipersMultiparameterPersistence2024}, and can be found at
\url{https://github.com/DavidLapous/multipers}. Moreover, it uses the vineyard algorithm~\cite{cohen-steinerVinesVineyardsUpdating2006}, which allows us to run Steps 1 and 2 jointly
(see Section~\ref{subsec:vineyards}).

\vspace{3mm}
\begin{algorithm}[H]\label{algo:approx}
	\caption{\MMA: Multi-parameter persistence Module Approximation.}
	\textbf{Input 1:} A \rebuttal{f.p. $n$-parameter persistence module $\M$
	and a compact $K\subset \R^n$ that \compacityassumption{} $\Mbb$}, \\
	\textbf{Input 2:} $\delta$-grid of evenly spaced diagonal lines $L=L_\delta(K^\delta)$ \\
	\textbf{Input 3:} Matching function $\matching$ \\ %
	\textbf{Output:} Candidate decomposition $\tilde \Mbb^{\MMA{}}_\delta$ \\
	Compute $\fibered{\Mbb}_L$, %
	i.e., the $L$-fibered barcode of $\Mbb$;\\
	$S\leftarrow$ []; \textcolor{gray}{\footnotesize{\# $S$ is the set of interval summands of the output \mmaout{} decomposition, intialized as the empty set}}
	\\
	\For{$l \in L$}{
		\For{$[b_l^\Mbb,d_l^\Mbb]\in\barcode{\restr{\Mbb}{l}}$}{
			\textcolor{gray}{\footnotesize{\# Check whether it is in the image of the input matching}}\\
			\If{$\exists B \in S$ and $[b,d]\in B$ s.t. $[b_l^{\Mbb},d_l^{\Mbb}]=\matching([b,d])$}{
				$B$.append($[b_l^{\Mbb},d_l^{\Mbb}]$); \textcolor{gray}{\footnotesize{\# If it is, attach the bar to the corresponding summand}}
			}
			\textcolor{gray}{\footnotesize{\# Otherwise initialize a new summand with the bar}}\\
			\Else{
				Add $B:=[[b_l^\Mbb,d_l^\Mbb]]$ to $S$;
			}
		}
	}
	\textcolor{gray}{\footnotesize{\# For each summand in $S$ characterized by a set of bars, build an approximate interval summand}}\\ %
	\textbf{Return} $\tilde \Mbb^{\MMA{}}_\delta:= \bigoplus_{B\in S}$  \textsc{ApproximateInterval}$(B)$; %
\end{algorithm}

\vspace{3mm}

We now describe the algorithm \textsc{ApproximateInterval}, which is used at the end of Algorithm~\ref{algo:approx}.
Its pseudo-code is given in Algorithm~\ref{algo:approx_inter},
and is defined in two steps:\\

\begin{enumerate}
	\item first, we {\em label} birthpoints and deathpoints to identify facets with \textsc{LabelEndpoints}
		(Algorithm~\ref{algo:label_endpoints}),
	\item then, we use these labels to compute candidate critical points with \textsc{ComputeCorners} (Algorithm~\ref{algo:generate_corners}). \\
\end{enumerate}

\begin{algorithm}[H]\label{algo:approx_inter}
	\caption{\textsc{ApproximateInterval}}
	\textbf{Input:} Set of bars $B=\{[b_l,d_l)\}_{l\in L_B}$, where $L_B\subseteq L$ \\ %
	\textbf{Output:} Discretely presented interval module $\rebuttal{\induced{\tilde I(B)}}$ \\ %
	\textsc{labs} $\leftarrow$ \textsc{LabelEndpoints}$(B)$; \\
	$\birthLcp{B},\deathLcp{B} \leftarrow$ \textsc{ComputeCorners}$(B,$ \textsc{labs}$)$;\\
	$\tilde I(B)$ $\leftarrow$ $\bigcup_{c\in \birthLcp{B}}\bigcup_{c'\in \deathLcp{B}} R_{c,c'}$;\\
	{\bf Return} \rebuttal{$\induced{\tilde I(B)}$};
\end{algorithm}

\vspace{3mm}
We first describe \textsc{LabelEndpoints}.
The core idea of this algorithm, whose pseudo-code is given in
Algorithm~\ref{algo:label_endpoints}, is, for a given bar in $I$ associated to
a line $l\in L$, to look at the corresponding surrounding set $L_l$ %
(see item (4) in Definition~\ref{def:regularly_distributed}).
If there exists a hyperplane $H$ such that all endpoints in this surrounding set
belong to $H$, we identify $H$ as a facet, and we label the bar with the codirection of $H$. \\

\begin{algorithm}[H]\label{algo:label_endpoints}
	\caption{\textsc{LabelEndpoints}}
	\textbf{Input:} Set of bars $B=\{[b_l,d_l)\}_{l\in L_B}$, where $L_B\subseteq L$ \\ %
	\textbf{Output:} List \textsc{labs} of labels for each endpoint in $B$ \\
	\textsc{labs}$(b_l)\leftarrow []$ for all $l\in L_B$;\\
	\For{$l \in L_B$}{
		\If{$\exists i \in \llbracket 1,n\rrbracket $ and $c_i\in \R,$ such that $\forall l' \in L_l, \,(b_{l'})_i = c_i$}{
			Add $(i,c_i)$ to \textsc{labs}$(b_{l'})$ for all $l'\in L_l$;
		}
	}
	{\bf Return} \textsc{labs};
\end{algorithm}

\vspace{3mm}
Note that endpoints can have zero or more than one label. For instance, an endpoint
that belongs to the intersection of several facets might have multiple labels.
However, if several labels are identified, they must be associated to
different dimensions. See Figure~\ref{fig:label_algo} for examples of label assignments
when the underlying interval module has rectangle support.

\svg{label_endpoints}{0.8}{
	\label{fig:label_algo}
	Example of birthpoint labelling for an interval module $I$ with rectangle support
	with three surrounding sets of lines $L_{l_1}$, $L_{l_2}$, $L_{l_3}$
	associated to three lines $l_1,l_2,l_3$. The labels of $l_1,l_2,l_3$ that are identified
correspond to the red, blue and grey colored facets of $I$ respectively.}

Finally, we describe \textsc{ComputeCorners}.
The core idea of the algorithm, whose pseudo-code is given in Algorithm~\ref{algo:generate_corners}, is to
use the labels identified by \textsc{LabelEndpoints} to compute {\em corners},  or critical point estimates, in the following way:
if all birthpoints (resp. deathpoints) in a surrounding set have at least one associated facet,
i.e., have a non-empty list of labels, then a candidate corner
can be defined using the minimum (resp. maximum) of all birthpoints (resp. deathpoints) coordinates.
We only present the pseudo-code for birthpoints %
since the code for deathpoints is symmetric and can be obtained by replacing minimum by maximum and $-\infty$ by $+\infty$.
\rebuttal{Note that the correctness of \MMA{} follows directly from how these
	corners are computed: it is clear from Algorithm~\ref{algo:generate_corners}
	that, for any bar in the fibered barcode, the algorithm will produce a corner
that is lower (resp. larger) w.r.t. the partial order $\leqn$ than the birthpoint (resp. deathpoint) of the bar (excluding the trivial case of corners with infinite coordinates).}

\begin{algorithm}[h]
	\caption{\textsc{ComputeCorners} \label{algo:generate_corners}}
	\textbf{Input 1:} Set of bars $B=\{[b_l,d_l]\}_{l\in L_B}$, where $L_B\subseteq L$ \\
	\textbf{Input 2:} List \textsc{labs} of labels for each endpoint in $B$ \\ %
	\textbf{Output:} List of birth corners $C_B$ \\ %
	$C_B\leftarrow []$; \\ %
	\For{$l \in L_B$}{
		$B_{L_l}\leftarrow \{b_{l'}:l'\in L_l\cap L_B\}$; \textcolor{gray}{\footnotesize{\# Note that $B_{L_l}\subseteq B$ by construction}} \\
		\textcolor{gray}{\footnotesize{\# Check whether all birthpoints in the surrounding set belong to $K$}} \\ %
		\If{ $B_{L_l} \subseteq K$}{
			\textcolor{gray}{\footnotesize{\# Compute birth corner if all the birthpoints are labelled}}
			\\
			\If{\textsc{labs}$(b)\neq \varnothing, \forall b\in B_{L_l}$}
			{
				$\{(j, c_{j})\,:\,j\in \mathcal J\} \leftarrow \bigcup_{b\in B_{L_l}}$ \textsc{labs}$(b)$;
				\textcolor{gray}{\footnotesize{\# $\mathcal J\subseteq \llbracket 1,n\rrbracket$ is the set of codirections}}\\
				Define $C^l\in \R^n$ as
				\begin{itemize}
					\item $(C^l)_j = c_j$ if $j \in \mathcal J$
					\item $(C^l)_j = \min{
							\left\{
								(b_{l'})_j
								\,:\, l' \in L_l \cap L_B
							\right\}
						}$ otherwise
				\end{itemize}
				$C_B$.append$(C^l)$; \\ %
			}
			\textcolor{gray}{\footnotesize{ \# If the birthpoints are not all labeled,
					keep the birthpoints themselves as corners
			}}
			\\
			\Else{
				\For{$l'\in L_l\cap L_B$}{$C_B$.append$(b_{l'})$;}
			}
		}
		\textcolor{gray}{\footnotesize{\# If some birthpoints are not in $K$,
		they must correspond to infinite facets}} \\
		\Else{
			\ \textbf{Assert} $B_{L_l}\cap K^{\delta} \backslash K\neq\varnothing$; \\
			\textbf{Assert} \textsc{labs}$(b)\neq\varnothing$ for all $b\in B_{L_l}$; \\
			$\{(j, c_{j})\,:\,j\in \mathcal J\} \leftarrow \bigcup_{b\in B_{L_l}} \textsc{labs}(b)$; \textcolor{gray}{\footnotesize{\# The cardinality of $\mathcal J$ must be strictly less than $n$}}\\
			Define $C^l\in \R^n$ as:
			\begin{itemize}
				\item $(C^l)_j = c_j$ if $j \in \mathcal J$
				\item $(C^l)_j = -\infty$ otherwise
			\end{itemize}
			$C_B$.append$(C^l)$;
		}
	}
	{\bf Return} $C_B$;
\end{algorithm}

\paragraph*{Complexity.} Computing the $L$-fibered barcode $\fibered{\Mbb}_L$ on a simplicial complex, as well as assigning the corresponding bars to their associated
summands in the decomposition of $\Mbb$, can be done with the vineyard algorithm~\cite{cohen-steinerVinesVineyardsUpdating2006} as matching function with complexity
$O(N^3 + |L|\cdot N \cdot T)$, where $N$ is the number of simplices in the simplicial complex, and $T$ is the maximal number of transpositions required to update the
single-parameter filtrations corresponding to the consecutive lines in $L$. In the worst-case scenario, one has $T=N^2$.
Note that $T$ usually decreases to a fixed constant as $|L|$ increases, and that this computation can be easily parallelized in practice.

Now, adding the complexities
of Algorithms~\ref{algo:label_endpoints} and~\ref{algo:generate_corners},
the final complexity of Algorithm~\ref{algo:approx} is:
\begin{equation}\label{eq:MMA_complexity}
	O(N^3 + |L| \cdot N \cdot T  + |L|  \cdot n \cdot 2^{n-1}).
\end{equation}
Of importance, the dependence on $n$ is much better than the (exact) decomposition algorithm proposed in~\cite{deyGeneralizedPersistenceAlgorithm2022, deyDecomposingMultiparameterPersistence2025}
whose complexity is $O(N^{n(2\omega +1)})$, where $\omega < 2.373$ is the matrix
multiplication exponent.
It is also comparable
to \RIVET~\cite{lesnickInteractiveVisualization2D2015} (although \RIVET{} only works when $n=2$),
whose complexity is $O(N^3\kappa + (N+{\rm log}\kappa)\kappa^2)$,
where $\kappa=\kappa_x\kappa_y$ is the product of $x$ and $y$ coordinates used to evaluate the module
(note that $\kappa_x,\kappa_y$ are also user-dependent).
The elder-rule staircode~\cite{caiElderRuleStaircodesAugmentedMetric2021}
works only for point cloud data when $n=2$ and homology dimension $0$, but
has better complexity $O(m^2\log(m))$, where $m$ is the number of points.
Finally, note that our complexity
can be controlled by the number of lines, which is user-dependent. We illustrate this useful property in Section~\ref{sec:expe}. \\

\begin{remark}
	For the sake of simplicity and efficiency, the code that we provide at
	\url{https://github.com/DavidLapous/multipers} contains a simpler version of
	Algorithm~\ref{algo:generate_corners}, that does not compute and use labels,
	but rather gathers the birthpoints and deathpoints as corners directly. One can easily check that our approximation guarantees %
	(\cref{prop:tildeIcandidate} and~\cref{prop:approx}) carry over to that
	simpler algorithm, however the exactness result (\cref{prop:exact_recovery}) is only valid for corners computed with Algorithm~\ref{algo:generate_corners}.  \\
\end{remark}

\section{\rebuttal{Theoretical robustness of \MMA{}}}
\label{sec:approx}
\rebuttal{
In this section, our goal is to prove our first important result,
\cref{prop:tildeIcandidate}, which states that, if the matching function is {\em compatible}, then the \mmaout{} decompositions 
computed by \MMA{} are also {\em \mmaoutcompat{}} decompositions: they preserve the diagonal barcodes (up to $2\delta$) associated to {\em all} diagonal lines. We provide a proof in~\cref{subsec:approx_prop}. We also discuss the stability
of \MMA{} w.r.t. $\disti$ in~\cref{subsec:stab_prop}. %
}

\subsection{Approximation guarantee of \MMA{}}\label{subsec:approx_prop}

We first introduce so-called {\em compatible} matching functions, which are key elements for proving the approximation property satisfied by our \MMA{} algorithm. \\

\begin{definition}[Compatible matching function]\label{def:compat}
	Let $\Mbb$ be a {f.p.} $n$-parameter persistence module, and let $l_1,l_2\subseteq\R^n$ be two %
	diagonal lines
	that are at distance $\delta$ from each other. %
	Assume $\supp{\Mbb}\cap l_1$ and $\supp{\Mbb}\cap l_2$ are not empty,
	and let
	$[b_{1},d_{1})$  and $[b_2,d_{2})$ be bars in $\barcode{\restr{\Mbb}{l_1}}$ and $\barcode{\restr{\Mbb}{l_2}}$, characterized by their endpoints.
	These bars are \emph{compatible} if the rectangles %
	$\rectangle{b_{1}}{b_{2}}$, $\rectangle{b_{2}}{b_{1}}$, $\rectangle{d_{1}}{d_{2}}$ and $\rectangle{d_{2}}{d_{1}}$ are flat or empty. Equivalently, two bars are \emph{compatible} if their
	birthpoints (resp. deathpoints) are not strictly comparable, i.e., $b_1 \not <_n b_2$, and $b_1 \not >_n b_2$ (resp. $d_1 \not <_n d_2$, and $d_1 \not >_n d_2$).
	Moreover, we say that $[b^\Mbb_{l_1},d^\Mbb_{l_1})$ is \emph{compatible with
	the empty set} in $l_2$ if $ \norm{ b^\Mbb_{l_1}-d^\Mbb_{l_1}}_{\infty}\le 2 \delta$. %

	A {\em compatible matching function} is a matching function that only pairs bars that are compatible. \\
\end{definition}

\begin{remark}\label{rem:induced}
	Induced matching functions (see \cref{ex:exact_matching}) are compatible, as per~\cref{rem:flat_rect}. \\
\end{remark}

\begin{proposition}[Approximation result]\label{prop:tildeIcandidate}
	Let $\Mbb$ be a  %
	{f.p.}
	$n$-parameter persistence module and $\delta>0$. %
	Let $K$ be a rectangle in $\R^n$ %
	{that \compacityassumption{} $\Mbb$},
	and $L:=L_\delta(K^\delta)$ be the $\delta$-grid of lines of the offset $K^\delta$.
	Finally, let ${\tilde\Mbb^{\mMMA{}}_\delta :=\mMMA{}(\M,L,\matching)}$, where $\matching$ %
	is a compatible matching function.
	Then
	$\tilde \Mbb^{\mMMA{}}_\delta$ is a $\delta$-\mmaoutcompat{} decomposition of $\Mbb$. More precisely, given some diagonal line $l$, one has:
	\begin{enumerate}[label=\textnormal{(${\roman*}$)}]
		\item \label{enum:apprx_result_exact}$\distb\left(\barcode{\restr{\Mbb}{l}}, \barcode{\restr{\tilde\Mbb^{\emph{\MMA{}}}_\delta}{l}}\right)=0$ if $l\in L$,
			and
		\item \label{enum:apprx_result_bound}$\distb\left(\barcode{\restr{\Mbb}{l}}, \barcode{\restr{\tilde\Mbb^{\emph{\MMA{}}}_\delta}{l}}\right)\leq 2\delta$ otherwise. \\  %
	\end{enumerate}
\end{proposition}

In order for Proposition~\ref{prop:tildeIcandidate} to apply, one needs to find a compatible matching function $\matching$.
We discuss how to design such matching functions for interval decomposable modules in~\cref{app:compat_exact}
and for general $2$-parameter modules in~\cref{subsec:vineyards}.
We also hypothesize that compatible matching functions for general $n$-parameter persistence modules
could be constructed using representative cycles in a similar way than the
construction of the graphcode~\cite{russoldGraphcodeLearningMultiparameter2024, kerberRepresentingTwoparameterPersistence2025}, a conjecture that we leave for future work.

\begin{proof} %

	We first prove \ref{enum:apprx_result_exact}. Let $l\in L$, and let $\birthptapprox$ be the birthpoint of a bar $b$ in $\barcode{\restr{\Mbb}{l}}$.
	Let $B$ be the set of bars containing $b$ computed with
	Algorithm~\ref{algo:approx}, let $\birthLcp{B}$ and $\deathLcp{B}$ be the
	birth and death corners computed with Algorithm~\ref{algo:generate_corners}, and let $\tilde I$ be the interval computed with Algorithm~\ref{algo:approx_inter},
	i.e., one has:
	\begin{equation}\label{heuristic}
		 \tilde I =  \bigcup_{c \in \birthLcp{B}} \bigcup_{c' \in \deathLcp{B}} \rectangle{c}{c'} ,
	\end{equation}

	In order to show \ref{enum:apprx_result_exact},
	we first need to show that $\birthptapprox=b^{\tilde I}_l$ (and then the proof for deathpoints will follow by symmetry), where $b^{\tilde I}_l$ is defined as per \cref{def:birthpoints_deathpoints}.
	Note that $\birthptapprox$ and $b^{\tilde I}_l$ are comparable since they belong to the same diagonal line $l$. \\

	\noindent {\bf Strategy.} In order to show $\birthptapprox=b^{\tilde I}_l$, we
	are going to show that 1. $\birthptapprox\leqn b^{\tilde I}_l$ and 2. $b^{\tilde I}_l\leqn \birthptapprox$. %

	\begin{enumerate}
		\item In order to show $\birthptapprox\leqn b^{\tilde I}_l$,
			we are going to show that
			$c \not \sln \birthptapprox$ for any corner $c\in \birthLcp{B}$.
			Indeed, if one assumes $\birthptapprox \sgn b^{\tilde I}_l$ by contradiction, and since
			there always exists a birth corner $c\in \birthLcp{B}$ such that $c\leqn b^{\tilde I}_l$ by construction of $\tilde I$,
			one has $c\leqn b^{\tilde I}_l \sln \birthptapprox$.
		\item In order to show $b^{\tilde I}_l\leqn \birthptapprox$,
			we are going to show that
			there exists a corner $c \in \birthLcp{B}$ such that $c \leqn \birthptapprox$.
			Indeed, if there is such a birth corner, and if $b^{\tilde I}_l \sgn \birthptapprox$ by contradiction, then
			$c\leqn \birthptapprox \sln b^{\tilde I}_l$, and $\rectangle{c}{b^{\tilde I}_l}$ is not flat, contradicting Lemma~\ref{rem:flat_rect}.
	\end{enumerate}

	\noindent \textbf{Proof of (2).}
	By construction of $\tilde I$ with Algorithm~\ref{algo:generate_corners}, if $\birthptapprox$ is labelled, then there
	exists a line $l'$ and a corner $c^{l'}\in \birthLcp{B}$ that is smaller than $\birthptapprox$ so we can take $c:=c^{l'}$.
	If $\birthptapprox$ is not labelled, it belongs itself to $\birthLcp{B}$, and we can take $c:=\birthptapprox$. \\

	\noindent \textbf{Proof of (1).} Let $c \in \birthLcp{B}$ be a birth corner, and let $L_{l_0}$ be the associated surrounding set of lines for some $l_0 \in L$.
	Let $[c]_{l} :=\min \left[\left( c + \left( \R_+ \right)^n \right)\cap l\right]$ be the smallest element in
	the intersection between the positive cone on $c$ and $l$.
	Assume $[c]_{l}\geqn \birthptapprox$ and $c \sln \birthptapprox$. Then $\rectangle{c}{[c]_l}$ is not flat, contradicting the fact that $[c]_l$ is the smallest
	element. Thus, we only have to show $[c]_{l}\geqn \birthptapprox$.
	There are two cases.
	\begin{enumerate}
		\item Either some birthpoints of $L_{l_0}$ are not labelled by
			Algorithm~\ref{algo:label_endpoints}, and  $c$ is equal to the birthpoint $b_{l'}$ of another bar in $\barcode{\restr{\Mbb}{l'}}{\cap B}$ for some $l' \in L_{l_0}$.
			Now, assume $[c]_l \sln \birthptapprox$ by contradiction. Then $b_{l'}= c \leqn [c]_l \sln \birthptapprox$. Thus $b_{l'} \sln \birthptapprox$
			and $\rectangle{b_{l'}}{\birthptapprox}$ is not flat, contradicting
			{the fact that $\sigma$ is compatible}.
			Hence $[c]_l \geqn \birthptapprox$.

		\item Or all the birthpoints of $L_{l_0}$ are labelled by Algorithm~\ref{algo:label_endpoints}. Again, we study two separate cases. See Figure~\ref{fig:candidate_proof} for an illustration.
			\begin{enumerate}
				\item  Either $l\in L_{l_0}$. Then, $\exists i \in \llbracket 1,n\rrbracket$ such that $(\birthptapprox)_i=c_i$.
					This yields $(\birthptapprox)_i = c_i \le ([c]_l)_i$, and thus $[c]_l \geqn \birthptapprox$
					since they both belong to the same diagonal line $l$.

				\item Or the line $l$ does not belong to $L_{l_0}$. Since $[c]_l$ is on the boundary of the positive cone based on $c$, there exists
					$ i \in \llbracket 1,n \rrbracket$ such that
					$ ([c]_l)_i = c_i$. %
					Assume again by contradiction that $\birthptapprox \sgn [c]_l$,
					and write:
					\begin{equation*}
						[c]_l = c + \sum_{j\neq i} (\delta \alpha_j) e_j =: c +  \ora v \sln \birthptapprox,
					\end{equation*}
					with $ \alpha_j \ge 0$ for $j \in \llbracket 1,n \rrbracket \backslash \left\{ i\right\}$.
					Since $l \notin L_{l_0}$, there exists some $j_0$ such that $ \alpha_{j_0} >1$. %
					Let
					$\overrightarrow u := ((\overrightarrow v_j\, \mathrm{ mod}\, \delta)_{j\in \llbracket 1,n\rrbracket}) =
					( ([c]_l - c)_j\, \mathrm{ mod}\, \delta)_{j\in \llbracket 1,n\rrbracket} \in [0,\delta)^n \leqn \ora v$. %
					Let $l':=l_{c+\ora u}$ be the diagonal line passing through $c+\ora u$.
					Now, recall that the lines of $L$ are drawn on a grid, %
					so $l'\in L$
					since $l' = l + \ora u - \ora v$.
					Moreover, one has:
					by definition,
					$c\in \conv{L_{l_0}}$.
					Since the lines of $L$ are on a grid, one has:
					\begin{equation*}
						\forall l_1,l_2 \in L,\quad
					\Vert l_1 \cap H_n, \conv{L_{l_2}} \cap H_n)\Vert_\infty < \delta  \Longrightarrow l_1 \in L_{l_2},
				\end{equation*}
				where
				$H_n = \left\{ x\in \mathbb R^n : x_n = c_n \right\}$.
				Now,  note that $c+ \ora u$ and $c + \ora u -\ora u_n\cdot \textbf{1}$ both belong to $l'$, and that $c + \ora u -\ora u_n\cdot \textbf{1} \in H_n$. Moreover, since:
				$$\norm{ (c + (\ora u -\ora u_n\cdot \textbf{1})) - c}_\infty =  \norm{ \ora u -\ora u_n\cdot \textbf{1}}_{\infty} < \delta,$$
				one has $l'\in L_{l_0}$.
				Thus, {letting $b_{l'}$ be the birthpoint of the corresponding bar
				in $\barcode{\restr{\Mbb}{l'}} \cap B$}, there exists $ i'\in \llbracket 1,n\rrbracket$ such that $(b_{l'})_{i'} = c_{i'} \leq (c+\overrightarrow u)_{i'}$
				and thus $b_{l'} \leqn (c+\overrightarrow u)$ since $b_{l'}$ and $c+ \overrightarrow u$ are comparable on the diagonal line $l'$. Finally,
				$b_{l'} \leqn c+ \overrightarrow u \leqn c+ \overrightarrow v \sln \birthptapprox$,
				and $\rectangle{b_{l'}}{\birthptapprox}$ is not flat, contradicting
				the fact that $\sigma$ is compatible.
				Hence, $\birthptapprox\leqn [c]_l$.

		\end{enumerate}
		\begin{figure}[H]
			\centering{}
			\includesvg[width=.8\textwidth]{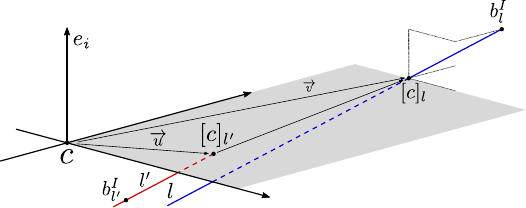}
			\caption{Illustration of $l,l',c,[c]_l, [c]_{l'}, \ora{u}, \ora{v}, \birthptapprox, b_{l'}$ when one assumes that $[c]_l \sln \birthptapprox$.}
			\label{fig:candidate_proof}
		\end{figure}
\end{enumerate}

The proof applies straightforwardly to deathpoints by symmetry.

Now, the proof of \ref{enum:apprx_result_bound} is then a simple consequence of
Lemma~\ref{lemma:compatible_close}.
Indeed, given a line $l\not\in L$, there
must be a line $l^*\in L$ such that $l^* = l+\ora{u}$ with
$\norm{\ora{u}}\leq \delta$ since $L$ fills $K$.
Then, one has:
\begin{equation*}
	\begin{array}{lcl}
		\distb\left(\barcode{\restr{\Mbb}{l}},
		\barcode{\restr{\tilde\Mbb}{l}}\right)
		&\leq&
		\distb\left(\barcode{\restr{\Mbb}{l}},
		\barcode{\restr{\Mbb}{l^*}}\right) + \distb\left(\barcode{\restr{\Mbb}{l^*}},
		\barcode{\restr{\tilde\Mbb}{l^*}}\right) +
		\distb\left(\barcode{\restr{\tilde\Mbb}{l^*}}, \barcode{\restr{\tilde\Mbb}{l}}\right)\\
		&\leq& \delta + 0 + \delta = 2\delta,
	\end{array}
\end{equation*}
by Lemma~\ref{lemma:compatible_close}.
\end{proof}

\paragraph*{Instability of \MMA{} w.r.t. interleaving distance $\disti$.} 

While using compatible matching functions helps controlling the diagonal fibered barcodes, it is unfortunately not sufficient for bounding the interleaving distance: 
outputs of \MMA{} can be very far in terms of $\disti$ while the modules they are computed from are not. 
We provide two
multi-parameter persistence modules in Figure~\ref{fig:non-stable-decomposition} that illustrate such lack of stability. In this figure, the two $2$-parameter filtrations only differ on the middle edge (in blue) of the simplicial complex. When the appearance of this edge is delayed (as is the case for the multi-parameter filtration displayed on top), the bars in the barcodes corresponding to the lower and upper cycles of the simplicial complex get paired by the compatible matching function, and create together the large red summand. On the other hand, this does not happen for the other multi-parameter filtration: the bars corresponding to these cycles are never paired and form distinct interval summands with same size.

\begin{figure}
	\centering
	\includegraphics[width=0.6\textwidth]{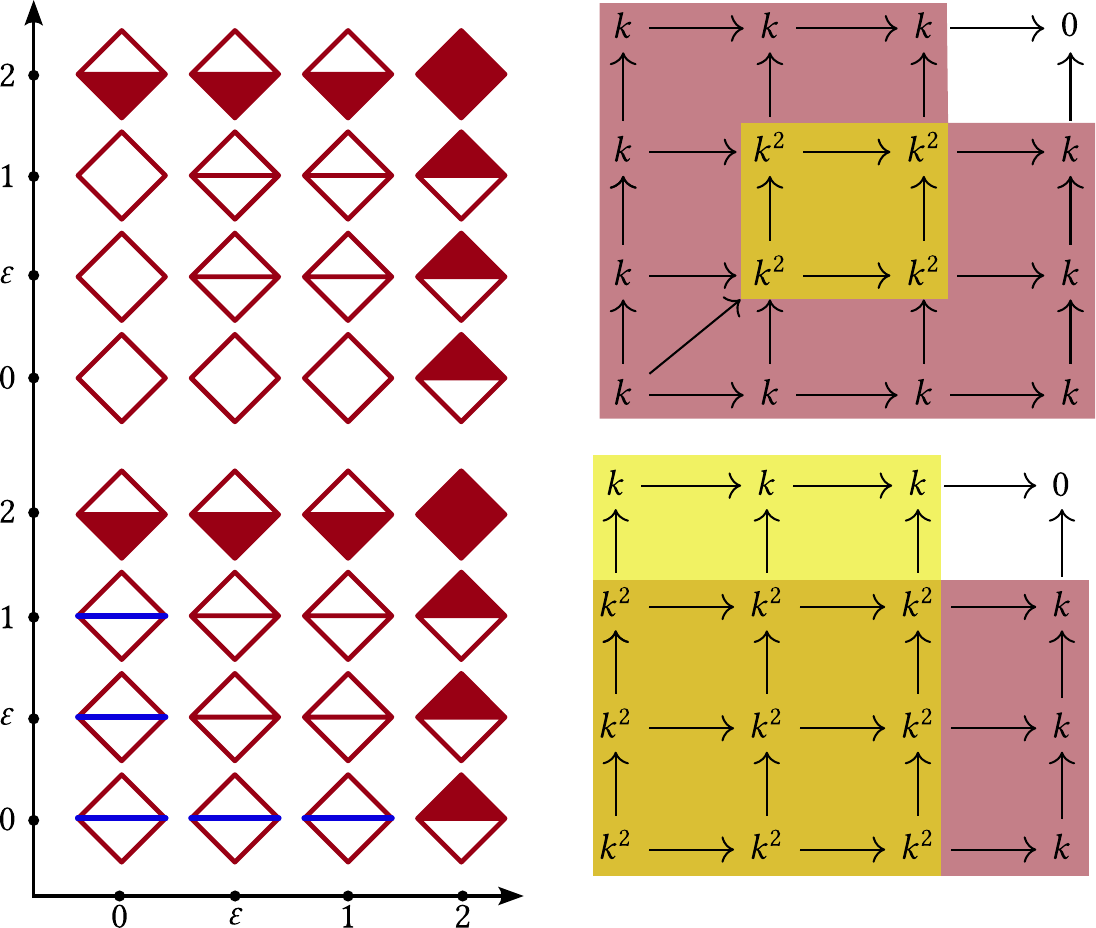}
	\caption{
		\textbf{(Left)} Two bi-filtrations whose corresponding multi-parameter persistence modules in homology dimension $1$ are $\varepsilon$-interleaved.  %
		\textbf{(Right)} The two corresponding, significantly different interval decompositions obtained with \MMA{} computed with a compatible matching. 
		Intervals in these decompositions are displayed with red and yellow colors.}
	\label{fig:non-stable-decomposition}
\end{figure}

Figure~\ref{fig:non-stable-decomposition} illustrates the
price to pay for designing interpretable decompositions (i.e., such that each summand corresponds to a cycle of the simplicial complex) when several choices are possible: there are several different ways to assign cycles to summands in the non-interval decomposable module displayed on top of Figure~\ref{fig:non-stable-decomposition}---the one computed by \MMA{} (shown in the figure) being one possibility. An important conjecture of this article, that is left for future work, is that stacking the candidate decompositions produced by our \MMA{} algorithm for all possible compatible matching functions induces a {\em complete} topological invariant of the module. 

\subsection{Stability property of \MMA{}}\label{subsec:stab_prop}

As it is not possible to control powerful distances such as $\disti$, 
we end this section by ensuring that \MMA{} can still remain stable w.r.t. to the data itself by appropriately 
choosing the matching functions.
Indeed, given two multi-parameter filtrations computed from the sublevel sets of functions $f,g$, Equation~(\ref{eq:stability}) ensures that the bottleneck distances between barcodes in the fibered barcodes of $\M(f)$ and $\M(g)$ are upper bounded by $\norm{f-g}_\infty$. This in turns means that we can fix an (arbitrary) compatible matching function $\matching_f$ (if it exists) for computing an \mmaoutcompat{} decomposition of $f$ with \MMA{}, and define another one $\matching_g$ that commutes with $\matching_f$ and the optimal partial matching $\nu$ given by those bottleneck distances. Doing so 
leads to the following proposition. \\ %

\begin{proposition}[Enforced stability]\label{prop:mma_stab}
	Let $f,g:\simpcomp\to\R^n$ be two continuous functions defined on a
	topological space $\simpcomp$, and let $\Mbb(f)$ and $\Mbb(g)$ be 
	the
	multi-parameter persistence modules associated to the homology groups of their
	sublevel sets. %
	Let $K$ be a rectangle in $\R^n$ %
	\rebuttal{that \compacityassumption{}  
	$\Mbb(f)$ and $\Mbb(g)$},
	and $L:=L_\delta(K^\delta)$ be the $\delta$-grid of lines of the offset $K^\delta$.
	Finally, let $\sigma_f$ be an arbitrary compatible matching function.
	Then, there exists a matching function $\sigma_g$,
	such that the following diagram commutes:
	\begin{equation}
		\begin{tikzcd}
			\barcode{\restr{\Mbb(f)}{l}}
			\arrow{r}{\sigma_{f}}
			\arrow{d}{}
			&
			\barcode{\restr{\Mbb(f)}{l'}}
			\arrow{d}{}
			\\
			\barcode{\restr{\Mbb(g)}{l}}
			\arrow{r}{\sigma_{g}}
			&
			\barcode{\restr{\Mbb(g)}{l'}}
		\end{tikzcd}.
	\end{equation}
	In particular, assuming that $\sigma_g$ is also compatible,
	if we let: %
	\begin{equation}
		\tilde\M^{\mMMA{}}_\delta(f):=\mMMA(\Mbb(f),L,\matching_f)
		\quad
		\textnormal{and}
		\quad
		\tilde\M^{\mMMA{}}_\delta(g):=\mMMA(\Mbb(g),L,\matching_g),
	\end{equation}
	we have the following stability inequality:
	\begin{equation}
		\disti(\tilde\M^{\mMMA{}}_\delta(f), \tilde\M^{\mMMA{}}_\delta(g))\leq
		\distb(\tilde\M^{\mMMA{}}_\delta(f), \tilde\M^{\mMMA{}}_\delta(g))\leq
		\norm{f-g}_\infty + \delta.
	\end{equation}
\end{proposition}

\begin{proof}
	Let $\matching_g$ be the matching function induced by the following commutative diagram:
	\begin{equation}
		\begin{tikzcd}
			\barcode{\restr{\Mbb(f)}{l}}
			\arrow{r}{\sigma_{f}}
			\arrow{d}{\nu_{l,\distb}}
			&
			\barcode{\restr{\Mbb(f)}{l'}}
			\arrow{d}{\nu_{l',\distb}}
			\\
			\barcode{\restr{\Mbb(g)}{l}}
			\arrow{r}{\sigma_{g}}
			&
			\barcode{\restr{\Mbb(g)}{l'}},
		\end{tikzcd}
	\end{equation}
	where $\nu_{l,\distb}$ denotes the optimal partial matching induced by
	$\distb(\barcode{\restr{\Mbb(f)}{l}}, \barcode{\restr{\Mbb(g)}{l}})$.

	Let $k^{I_f},k^{I_g}$ denote two interval summands of $\tilde\M^{\mMMA{}}_\delta(f)$ and $\tilde\M^{\mMMA{}}_\delta(g)$ that are paired by $\nu_{\cdot,\distb}$. Then, controlling the bottleneck distance between these outputs of \MMA{} simply amounts to controlling the Hausdorff distance between ${I_f}$ and ${I_g}$.
	In order to control this distance, let $B_f$ and $B_g$ denote the bars that induced these interval summands as per Algorithm~\ref{algo:approx_inter}.
	Then, one has $B_f\subseteq {I_f}$ and $B_g\subseteq B_f^\gamma$, where $\gamma=\norm{f-g}_\infty$.
	Thus, since $\supp{I_g}\subseteq B_g^\delta$ (by construction), one has $I_g\subseteq B_f^{\gamma+\delta}\subseteq \supp{I_f}^{\gamma+\delta}$. The result follows by symmetry of $f$ and $g$.
\end{proof}
\rebuttal{
Note that finding compatible matching functions can be weakened into a convex problem (as compatibility can be checked with a sequence of inequalities),
thus inducing an open question: is it possible to define
an optimization problem whose minimization would yield a matching function
that is both compatible \emph{and stable}
with respect to the input data?
The paragraph at the end of \cref{subsec:approx_prop} shows in particular
that adding another approximation term is necessary if one wants to
avoid decomposition instabilities.
}

\section{\rebuttal{The case of interval decomposable modules}}
\label{sec:exact_algo}

In this section, we 
\rebuttal{refine~\cref{prop:tildeIcandidate} to
interval decomposable modules. In particular, we show that, upon using
{\em induced} matching functions (see~\cref{ex:exact_matching}), the 
\mmaoutcompat{} decompositions computed by our \MMA{} algorithm become
{\em stable w.r.t. the interleaving and bottleneck distances} (\cref{prop:approx})
in \cref{subsec:proof_approx},
and that the input interval decomposable module can even be recovered exactly 
for a small yet positive $\delta$ 
(\cref{prop:exact_recovery}) in~\cref{subsec:exact}.}

\subsection{Stability w.r.t. interleaving and bottleneck distances
}\label{subsec:proof_approx}

The goal of this section is to show the following result: \\

\begin{proposition}[Stability result]\label{prop:approx}
	Let $\Mbb$ be a f.p. interval decomposable $n$-parameter persistence module.
	Let $K$ be a rectangle in $\R^n$
	that \compacityassumption{} $\Mbb$,
	and $L:=L_\delta(K^\delta)$ be the $\delta$-grid of lines of the offset $K^\delta$.
	Finally, let ${\tilde\Mbb^{\mMMA{}}_\delta :=\mMMA{}(\M,L,\matching)}$, where $\matching$ %
	is a matching function that commutes with the induced matching function $\sigma_\Mbb$.
	More precisely, denoting $\Mbb=\bigoplus_{i\in \mathcal I}\induced{I_i}$
	and $\tilde\Mbb^{\mMMA{}}_\delta=\bigoplus_{i\in \tilde{ \mathcal I}}\induced{\tilde I_i}$, this means that there exists a
	bijection $\nu \colon \mathcal I_L \to \tilde{ \mathcal I}$, where $\mathcal I_L=\{i\in\mathcal I : \interval_i\cap L \neq\varnothing\}$,
	such that, %
	for any two %
	lines $l,l' \in L$, the following diagram commutes:
	\[
		\begin{tikzcd}
			\barcode{\restr{\Mbb}{l}}
			\arrow{r}{\matching_{\Mbb}}
			\arrow{d}{\nu_{l}}
			&
			\barcode{\restr{\Mbb}{l'}}
			\arrow{d}{\nu_{l'}}
			\\
			\barcode{\restr{\tilde \Mbb^{\mMMA{}}_\delta}{l}}
			\arrow{r}{\matching}
			&
			\barcode{\restr{\tilde \Mbb^{\mMMA{}}_\delta}{l'}},
		\end{tikzcd}
	\]
	where
	$\nu_{l}\colon \restr {\interval_i} {l}  \in \barcode{\restr{\Mbb}{l}} \mapsto \restr{\tilde \interval_{\nu(i)}}{l} \in \barcode{\restr{\tilde \Mbb^{\mMMA{}}_\delta}{l}}$ (and similarly for $l'$).

	Then, %
	one has:
	$$
	\disti(
	\restr{\tilde \M^{\emph{\MMA{}}}_\delta} K, \restr \M K)
	\leq
	\distb (\restr{\tilde \M^{\emph{\MMA{}}}_\delta}  K, \restr \M  K) \leq \delta.$$
	\\
\end{proposition}

One might wonder how to construct matching functions that commute with
$\matching_{\Mbb}$ in practice. It turns out that any compatible matching function,
as well as the matching functions associated to the Wasserstein distances and
the vineyards algorithm, all commute with the induced matching $\matching_{\Mbb}$ for small enough $\delta$ and under some generic assumptions, as we show in~\cref{app:compat_exact}.

Note also that it is possible to generalize Proposition~\ref{prop:approx} to
modules that are not restricted to $K$ by constraining the parts of the
\mmaout{} decompositions that are outside of $K$ with Kan extensions, but we stick to our formulation for the sake of simplicity. We will
now prove a few technical results and lemmas in Section~\ref{subsec:lemmas}, and we finally prove Proposition~\ref{prop:approx} in Section~\ref{subsec:main_proof}.

\subsubsection{Additional lemmas}\label{subsec:lemmas}

In this section, we prove a few preliminary results about endpoints of interval modules, that will turn out useful for
proving Proposition~\ref{prop:approx}. %

\paragraph*{Endpoint location.}
The next definition and result show that endpoints of an interval module must
be located in the vicinity of the other endpoints of the module that are close
to it---more precisely, in their {\em rectangle hull}. This will be useful in
the proof of Proposition~\ref{prop:approx}; in particular, we will use this
result to characterize the positions of endpoints of any given diagonal line $l$ solely from the endpoints of the lines of the grid $L$ that are close to $l$. \\

\begin{definition}
	Let $S\subseteq \R^n$. The {\em rectangle hull} of $S$, denoted by $\mathrm{ recthull}[S]$, is defined as the smallest rectangle containing $S$:
	\begin{equation*}
		\mathrm{ recthull}[S] := \left\{ x \in \R^n : \forall i\in \llbracket 1,n\rrbracket, \min_{s\in S}s_i \le x_i \le \max_{s\in S} s_i\right\} = \rectangle{\wedge S}{\vee S},
	\end{equation*}
	where $(\wedge S)_i := \min_{s\in S}s_i$ and $(\vee S)_i := \max_{s\in S}s_i$. \\
\end{definition}

\begin{lemma}[Endpoints bound]\label{lemma:boundary_bound}
	Let $k^I$ be a {f.p.} interval module. Let $K$ be a rectangle in $\R^n$ and $L:=L_\delta(K^\delta)$ be the $\delta$-grid of lines of the offset $K^\delta$.
	Let $x\in K$, $l_x$ be the diagonal line passing through $x$, and let $
	L_{x,\delta}:=\{l\in L\,:\, d_\infty(x,l) \leq \delta \text{ and } l_x, l \text{ are $\delta$-comparable}\}
	$, which is non-empty since
	$L$ $\delta$-fills $K$.
	Assume that $l_x\cap I\neq \varnothing$, and $d^I_x \in \upperb{I}$ be the associated deathpoint, and
	assume that for any line $l$ in $L_{x,\delta}$, one has $I\cap l\neq \varnothing$, and
	let $D_{x,\delta}^I$ be the set of the associated deathpoints: $D_{x,\delta}^I=\{d_l^I\,:\, l\in L_{x,\delta}\}$.
	Then, $d^I_x$ belongs to the rectangle hull of a subset $\tilde
	D_{x,\delta}^I$ of $D_{x,\delta}^I$: one has $d^I_x\in \mathrm{recthull} [\tilde D_{x,\delta}^I]$ with $\tilde D_{x,\delta}^I \subseteq D_{x,\delta}^I$.

	Similarly, if $b^I_x\in \lowerb{I}$ is a birthpoint, then $b^I_x\in\mathrm{
	recthull}[\tilde B_{x,\delta}^I]$, where $\tilde B_{x,\delta}^I$ is a subset of $B_{x,\delta}^I=\{b_l^I\,:\, l\in L_{x,\delta}\}$, i.e., the set of birthpoints associated to $L_{x,\delta}$. \\
\end{lemma}

In other words, the endpoints of an interval module always belong to the rectangle hull of the endpoints associated to neighbouring lines.
See Figure~\ref{fig:boundary_bound} for an illustration.
\begin{figure}[H]
	\begin{center}
		\includesvg[width=.6\textwidth]{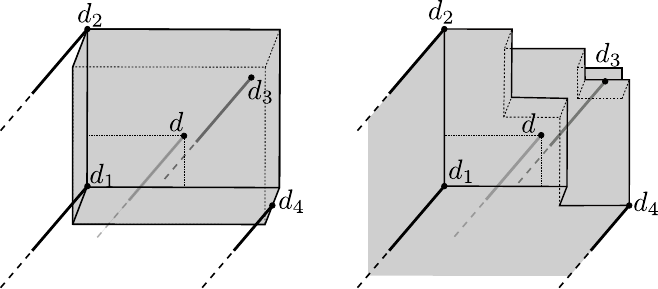}
	\end{center}
	\caption{ Example of deathpoint bound in $\R^3$, with $d\in \upperb{I}$, and $D_{x,\delta}^I= \left\{d_1,d_2,d_3,d_4 \right\}$.
		\textbf{(Left)} Rectangle hull of the deathpoints $D_{x,\delta}^I$.
	\textbf{(Right)} Upper-boundary $\upperb{I}$.}
	\label{fig:boundary_bound}
\end{figure}

\begin{proof}
	We first prove the result for deathpoints.
	Note that the result is trivially satisfied if $d^I_x$ and the deathpoints in $D^I_{x,\delta}$ are infinite,
	so we assume that they are finite in the following.
	To alleviate notations, we let $d:=d^I_x$.
	Let $j \in \llbracket 1,n \rrbracket $ be an arbitrary dimension.
	In order to prove the result, we will show that there exist two deathpoints $\underline{d}$ and $\overline{d}$ associated to consecutive lines of $L_{x,\delta}$ such that
	$
	\underline{d}_j\le d_j \le \overline{d}_j
	$. \\

	\noindent {\bf Construction of $\underline{d}, \overline{d}$. } Let $H_j$ be the hyperplane $H_j=d + e_j^{\perp}$. Since $L$ $\delta$-fills $K$, %
	there exists a diagonal line $l\in L$ such that $d_\infty(x,l)\le \delta/2$.
	Moreover, since $l$ and $l_x$ (the line passing through $x$ and $d$) are both diagonal, one has
	$d_\infty(d,l) = d_\infty(x,l)\le \delta/2$. Let $\pi_l(d)\in l$ be the projection of $d$ onto $l$ that achieves $d_\infty(d,l)$, and
	let $d^{j} := l \cap H_j$. See Figure~\ref{fig:proof} for an illustration of these objects.

	\svg{approx_le_delta}{0.5}{\label{fig:proof} Illustration of $H_j, d, l, d^j$. }

	Since $d^j$ and $d$ belong to $H_j$, they have the same $j$-th coordinate: $d^j_j=d_j$. Moreover, both $d^j$ and $\pi_l(d)$ belong to the diagonal line $l$,
	hence they are comparable, and $\Vert d^j-\pi_l(d)\Vert_\infty = |(d^j-\pi_l(d))_i|$ for any $i \in \llbracket 1, n\rrbracket$. Then, one has
	$\Vert d^{j} - d\Vert_{\infty} \le \Vert d^{j} - \pi_l(d)\Vert_{\infty} +
	\Vert \pi_l(d) - d\Vert_{\infty} = |(d^{j} - \pi_l(d))_j| + \Vert \pi_l(d) -
	d\Vert_{\infty} = |(d - \pi_l(d))_j| + \Vert \pi_l(d) - d\Vert_{\infty} \leq 2 \Vert \pi_l(d) - d\Vert_{\infty} \leq \delta$.
	Let
	$d^+ = d^j + \delta \sum_{j \in \mathcal J''} e_j \text{ and } d^- = d^j- \delta \sum_{j \in \mathcal J'} e_j$,
	where:
	\begin{equation*}
		\mathcal J' = \left\{ i \in \llbracket 1,n \rrbracket\backslash\{j\} : d_i < d^j_i \right\}
		\text{ and }
		\mathcal J'' = \left\{ i \in \llbracket 1,n \rrbracket\backslash\{j\} : d_i > d^j_i \right\}.
	\end{equation*}
	By construction, one has $d^- \le d \le d^+ \in H_j$. and
	\begin{equation}\label{eq:surr}
		\Vert d^+ - d\Vert_\infty, \Vert d^- - d\Vert_\infty \le \delta.
	\end{equation}
	Since $l$ and the diagonal lines $\underline l$ and $\overline l$ passing
	through $d^-$ and $d^+$ respectively are $\delta$-consecutive, and since $x\in
	K$, the projections of $x$ onto $\underline l$ and $\overline l$ are in
	$K^{\delta}$, and thus $\underline l, \overline l $ must belong to $L$, and
	thus to $L_{x,\delta}$, as by construction $l_x$ is $\delta$-comparable with the diagonal lines $\underline l$ and $\overline l$.
	Let $\overline{d}:=d_{\underline{l}}^I\in \underline{l}$ and $\underline{d}:=d_{\overline{l}}^I\in \overline{l}$ be their deathpoints (which exist by assumption).
	\\

	\noindent {\bf Proof of inequalities.} We now show that $\overline{d}_j \geqn
	d_j \geqn \underline{d}_j$. We start with the second inequality. Since $d^+$
	and $\underline{d}$ are one the same diagonal line, they are comparable.
	Furthermore, if one had $d^+ \sln \underline{d}$ by contradiction, then the induced rectangle $\rectangle{d}{\underline{d}}$ would not be flat since $d \leqn d^+ \sln \underline{d}$,
	which would contradict Lemma~\ref{rem:flat_rect}.
	As a consequence, $d^+ \geqn \underline{d}$. Taking the $j$-th coordinate yields $d_j = d^+_j \ge \underline{d}_j$. The first inequality holds
	using the same arguments. \\

	This proof applies straightforwardly to birthpoints by symmetry.
\end{proof}

Using Lemma~\ref{lemma:compatible_close}, one can generalize Lemma~\ref{lemma:boundary_bound} above to the case where
some lines in $L_{x,\delta}$ have an empty intersection with $I$, and then define a common location for all endpoints that belong to the
convex hull of the same $L$-surrounding set, as we do in the following proposition. \\

\begin{proposition}\label{prop:local_bound}
	Let ${k^{I}}$ be a {f.p.} interval module. Let $K$ be a rectangle in $\R^n$ and $L:=L_\delta(K^\delta)$ be the $\delta$-grid of lines of the offset $K^\delta$.
	Let $l\in L$ such that $|L_l| = 2^{n-1}$ and assume that $\conv{L_l} \cap \lowerb{I}$ (resp. $\conv{L_l} \cap \upperb{I}$) is not empty.
	Then, there exists a set $B_l$ (resp. $D_l$) such that for any $x\in \conv{L_l} \cap \lowerb{I}$ (resp. $\conv{L_l} \cap \upperb{I}$),
	one has either  $\Vert b_x^I -d_x^I \Vert_\infty \le \delta$, or
	$x \in B_l$ (resp. $D_l$), where $B_l$ (resp. $D_l$) is a rectangular set in
	$\R^n$ that can be constructed from the birthpoints $(b_{l'}^I)_{l'\in L_l}$  (resp. deathpoints $(d_{l'}^I)_{l'\in L_l}$). %
	Moreover, one has that:
	\begin{equation}\label{eq:diam}
		{\rm diam}(B_l)={\rm sup}_{x,x'\in B_l}\norm{x-x'}_\infty \leq \delta,
	\end{equation}
	and similarly for $D_l$. %

\end{proposition}

\begin{proof} We first construct $B_l$ and $D_l$, and then we will show items (1) and (2). \\

	{\bf Definition of $B_l,D_l$.}
	Let first assume that $x$ is in the interior of $ \conv{L_l}$, that we denote with $\interior{\conv{L_l}}$.
	Note that if there is a line $l_0$ that is $\delta$-comparable to $l_x$, and such that $\barcode{\restr{\induced{I}}{l_0}} = \varnothing$,  %
	then by Lemma~\ref{lemma:compatible_close} $(i)$, one immediately has $\Vert b_x^I - d_x^I \Vert_{\infty} \le \delta$.
	Hence, we now assume that the barcodes along any %
	line that is $\delta$-comparable to $l_x$ is not empty,
	which means that the hypotheses
	of Lemma~\ref{lemma:boundary_bound} are satisfied for $x$.
	Now, remark that since $L$ is a grid, if one is able to find a line $l'$ in $L$ whose intersections
	with hyperplanes associated to the canonical axes of $\R^n$ are $\delta$-close to $x$, then, since $x$
	is in the interior of an $L$-surrounding set $L_l$, $l'$ must belong to that surrounding set $L_l$ as well. More formally,
	one has that, for any line $l'\in L$:
	\begin{equation*}
		d_{\infty}(x, l'\cap H_i)\le \delta \quad\Longrightarrow\quad l' \in L_l,
		\quad
		\text{ where }
		H_i = \left\{ y \in \R^n : y_i = x_i\right\}.
	\end{equation*}

	This ensures (from Equation~\ref{eq:surr}) that the lines of $L$ associated to $\tilde B^I_{x,\delta}$ and  $\tilde D^I_{x,\delta}$
	are all included in $L_l$ for any $x\in\interior{\conv{L_l}}$, and thus that we can safely define:
	\begin{equation*}
		D_l := \overline{\bigcup_{x \in \interior{\conv{L_l}}}\mathrm{ recthull}[\tilde D_{x,\delta}^I]}
		\quad \text{and} \quad
		B_l := \overline{\bigcup_{x \in \interior{\conv{L_l}}}\mathrm{ recthull}[\tilde B_{x,\delta}^I] }.
	\end{equation*}
	Note that $B_l$ and $D_l$ depend only on the endpoints of the lines in $L_l$ %
	and that $d_x^I \in D_l$ and $b_x^I \in B_l$ for any $x\in \interior{\conv{L_l}}$ by Lemma~\ref{lemma:boundary_bound}.
	Furthermore, if $x$ is in the closure of $\conv{L_l}$, the previous statements still hold
	since $D_l$ and $B_l$ are closed sets.
	We now show that $B_l$ and $D_l$ satisfy Equation~(\ref{eq:diam}). \\

	{\bf Proof of Equation~(\ref{eq:diam}). } By applying
	Lemma~\ref{lemma:boundary_bound} and its proof for arbitrary dimension $j$ to all $x\in\conv{L_l}\cap U[I]$, %
	there exist deathpoints $\overline d^j$ and $\underline d_j$
	that satisfy $(\overline d^j)_j = \sup_{d\in D_l} d_j$ and $(\underline d_j)_j = \inf_{d\in D_l} d_j$ and $(\underline d_j)_j \le x_j \le (\overline d^j)_j$ for all $x\in\conv{L_l}\cap U[I]$.
	Moreover, these points are located on lines in $L_l$
	that are $\delta$-consecutive by definition.
	Thus, applying Lemma~\ref{lemma:compatible_close} $(ii)$ repeatedly on these
	pairs of lines for all dimensions, we end up with $D_l$ having a diagonal smaller than $\delta$. The same goes for birthpoints. %
\end{proof}

\subsubsection{Proof of Proposition~\ref{prop:approx}}\label{subsec:main_proof}
\begin{proof}
	Let $\Mbb= \bigoplus _{i \in \mathcal I}{\field^{I_i}}$ and
	$\tilde \Mbb^{\MMA{}}_\delta = \bigoplus_{i \in \tilde{ \mathcal I}} {\field^{\tilde I_i}}$
	be the interval decompositions of $\Mbb$ and $\tilde \Mbb^{\MMA{}}_\delta$,
	with induced matching functions $\matching_{\M}$ and $\matching$ respectively.
	In order to upper bound the bottleneck distance $\distb(\M,\tilde \M^{\MMA{}}_\delta)$,
	one can upper bound the interleaving distance $\disti({k^{I_i}}, {k^{\tilde I_{\nu(i)}}})$ for any index $i\in \mathcal I$.
	Let $I$ and $\tilde I$ be two such intervals (we drop the index $i$ to alleviate notations).
	We need to show that the
	morphisms
	$f^{ (\bound\delta)} \colon {\field^{I}} \rightarrow {\field^{\tilde I}}(\bound\boldsymbol \delta)$  and
	$g^{ (\bound\delta)} \colon {\field^{\tilde I}} \rightarrow {\field^{I}}(\bound\boldsymbol \delta )
	$
	exist and commute, i.e., that they induce a $\bound\delta$-interleaving.
	Hence, we first show that:
	\begin{equation}\label{to_show_approxRn}
		g^{ (\bound\delta)}_{x+\bound\boldsymbol \delta}
		\circ
		f^{(\bound\delta )}_x =
		\varphi^{x+2\boldsymbol \delta}_x,
	\end{equation}
	for any $x\in K$, where $\varphi_{\cdot}^{\cdot}$ denote the transition maps of $\induced{I}$. \\

	If $x\in l$ for some line $l\in L$, Equation~(\ref{to_show_approxRn}) is satisfied from ${I}\cap l={\tilde I}\cap l$,
	which itself comes from the fact that $\tilde \M^{\MMA{}}_\delta$ has the same $L$-fibered barcode than $\M$ (see~\cref{prop:tildeIcandidate} $(i)$). Hence, we assume in the following
	that $x \not \in \cup_{l\in L}l$.
	Furthermore, if $x\not\in {I}$ or $x+\twobound\boldsymbol \delta \not\in {I}$, then %
	Equation~(\ref{to_show_approxRn}) is trivially satisfied. %
	Hence, we also assume  $x, x+\twobound\boldsymbol \delta \in {I}$.
	This means that $b^I_x$ and $d^I_x$ are well-defined, and that
	$
	\varphi^{x+2\boldsymbol \delta}_x
	\cong \mathrm{ id}_{\field\to\field}
	$. Thus we only have to show that ${(k^{\tilde I})}_{x+ \bound\boldsymbol \delta} \cong \field$, i.e., $x+\bound\boldsymbol \delta\in {\tilde I}$. \\

	As $L=L_\delta(K^\delta)$ is the $\delta$-grid of $K^\delta$ and $x\in K$, let
	$l\in L$ be a line such that $x\in \conv{L_l}$ and let $l_x \subseteq \conv{L_{l}}$ be the diagonal line passing through $x$.
	Now, as $\rectangle{x}{x + \boldsymbol \delta} \subseteq {I}$, Lemma~\ref{lemma:compatible_close} $(1)$ ensures that $\barcode{\restr{{\induced{I}}}{l}}\neq \varnothing$
	for any line $l\in L$ that is $\delta$-comparable to $l_x$; and the same holds
	for $\tilde I$ since $\fibered{{\field^I}}_L=\fibered{{\induced{\tilde I}}}_L$. %
	Using Proposition~\ref{prop:local_bound} on both $I$ and $\tilde I$, %
	there exist two sets $B_l$ and $D_l$ such that
	$d_x^I, d_x^{\tilde I} \in D_l$ and $b_x^I, b_x^{\tilde I} \in B_l$,
	with the segments $l_x \cap B_l$ and $l_x \cap D_l$ having length at most $\delta$. %
	Since one also has: %
	\begin{equation*}
		b_x^I \le x \le x+ 2\boldsymbol \delta \le d_x^I,
	\end{equation*}
	and $ \norm{ d_x^I - d_x^{\tilde I}}_\infty, \norm{ b_x^I - b_x^{\tilde I}}_\infty \le \delta$, one finally has
	$
	b_x^{\tilde I} \le x+ \boldsymbol \delta \le d_{x}^{\tilde I},
	$
	which concludes that $x + \boldsymbol \delta \in {\tilde I}$.
	Since $I$ and $\tilde I$ are interchangeable in the arguments above, the result follows.
\end{proof}

\subsection{Exact reconstruction}\label{subsec:exact}

In this section, we identify the interval decomposable multi-parameter persistence modules that can be recovered
exactly with our \MMA{} algorithm.
Given a precision parameter $\delta > 0$, they correspond to modules that can decomposed into interval summands %
that form a subclass of the family of discretely presented interval modules,
that we call the {\em $\delta$-discretely presented interval modules}. \\

\begin{definition}[$\delta$-discretely presented interval module]\label{def:delta_discretely_presented}
	Let $K \subseteq \R^n$ be a rectangle in $ \R^n$,  and let $\rebuttal{\induced{I}}$ be a discretely presented interval module.
	Given $\delta > 0$, we say that $I$ is $\delta$-\emph{discretely presented} in $K$ if: %
	\begin{enumerate}
		\item (Large facets) for each point $x \in \lowerb{I}$ (resp. $\upperb{I}$) there exists, for each facet $F$ containing $x$,
			an $(n-1)$-hypercube $Q^x_F$ of side length $2\delta$ such that $x\in Q^x_F$ and $Q^x_F \subseteq F$; %
		\item (Large holes) if there exists a diagonal line $l$ such that $l\cap I = \varnothing$, then there exists an $n$-hypercube $R$ of side length $\delta$ containing ${\bf 0}$
			such that for any line $l'$ in $l+R$, %
			one has $l' \cap I = \varnothing$;
		\item (Locally small complexity) any $\infty$-ball of radius $\delta$, i.e.,
			any set $B_\delta(x):=\{y\in\R^n : d_\infty(x,y)\le\delta\}$ for some $x\in\R^n$, intersects at most one facet in $\lowerb{I}$ (resp. $\upperb{I}$) of any fixed codirection;
		\item (Compact description) each facet of $I$ has a non-empty intersection with $K$. \\
	\end{enumerate}
\end{definition}

Assumptions 1 and 2 ensures that the parts of $I$ are large enough w.r.t. $\delta$, %
while Assumptions 3 and 4 ensure that surrounding sets of lines can detect at most one facet associated
to a given codirection at a time, and that critical points of $I$ are all included in the rectangle $K$, respectively. \\

\begin{remark}
	One might wonder whether Assumption 2 and Assumption 3 are redundant with Assumption 1.
	In other words, one might wonder whether it is actually possible to define an interval module with large facets and small holes, or with large facets that can
	share the same codirection and lie close to each other at the same time.
	Even though this
	seems to be impossible %
	when $n=2$ (indicating that Assumption 2 and Assumption 3 might indeed be redundant with Assumption 1 in that case),
	it can happen when $n \ge 3$, as Figure~\ref{fig:interval_small_hole} shows.

	\svg{interval_small_hole}{1}{
		\label{fig:interval_small_hole}
		Example of interval module in dimension $n=3$ with large facets, small holes and some facets with the same codirection close to each other.
		The support of the module can be constructed by taking the (closed) red and (open) green $L$-shaped sets on \textbf{(Left)}, and glue them together
		as shown in \textbf{(Middle)}.
		While arbitrarily large facets can be created using this construction, the resulting interval always contains a small hole and large facets of same codirection that are close
		to each other. Because of this, it is possible to find a (blue) diagonal line that goes through the support without intersecting it, while lines in its surrounding set will
		detect some facets.
		\textbf{(Right)} View of the interval from the top showing the hole and the
		spatially close facets (showed in bold font). %
		This is an example where Assumptions 1 and 4 of Definition~\ref{def:delta_discretely_presented} are satisfied, %
	while Assumptions 2 and 3 are not.}
\end{remark}

The main advantage of $\delta$-discretely presented modules is that they ensure that Algorithm~\ref{algo:label_endpoints} can identify
every single facet with a corresponding label. \\

\begin{lemma}[{Labels are exact}]\label{lemma:label_exact}
	Let $\delta > 0$ and $K$ be a rectangle in $\R^n$. Let
	${\induced I}$ be a $ \delta$-discretely presented interval module in $K$, and let
	$L:=L_\delta(K^{2\delta})$ be the $\delta$-grid of lines of the offset $K^{2\delta}$.
	Then, there exists a bijection between the facets of $I$ and the labels identified by
	Algorithm~\ref{algo:label_endpoints}.
\end{lemma}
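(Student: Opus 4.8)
The goal is to establish a bijection between the facets of a $\delta$-discretely presented interval module $I$ (in a compact rectangle $K$) and the labels that Algorithm~\ref{algo:label_endpoints} assigns to endpoints, when $L$ is a $\delta$-grid of $K^{2\delta}$. I would set up the correspondence in the natural direction: send a facet $F$ of $I$, of codirection $i$ and lying in the hyperplane $\{x_i = c\}$, to the label $(i,c)$ produced by the algorithm; conversely, a label $(i,c)$ appearing in the output comes (by construction of the algorithm) from a surrounding set $L_l$ all of whose endpoints have $i$-th coordinate equal to $c$, and I would show this forces the existence of a facet of codirection $i$ in $\{x_i = c\}$. The proof then splits into three checks: (well-definedness / surjectivity) every facet produces a label; (injectivity) distinct facets produce distinct labels; (the converse) every label produced corresponds to a genuine facet.

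\textbf{Surjectivity (every facet is detected).} Fix a facet $F$ of $I$, say a lower facet of codirection $i$ contained in $\{x_i = c\}$, and pick any point $x_0$ in the relative interior of the $(n-1)$-hypercube $Q$ of side length $2\delta$ guaranteed by Assumption~1 (Large facets) with $Q \subseteq F$. Since $L$ is a $\delta$-grid of $K^{2\delta}$ and $F \cap K \neq \varnothing$ (Assumption~4), I can choose a line $l \in L$ whose $L$-surrounding set $L_l$ ``straddles'' a point of $Q$: more precisely, because $Q$ has side $2\delta$ and the grid has spacing $\delta$ in each canonical direction, there is a unit cell $\conv{L_l}$ of the grid (in the coordinate directions spanning the hyperplane $\{x_i=c\}$, with a translate along $e_i$ having length $0$ since $F$ is flat) whose $2^{n-1}$ corner lines each meet $F$. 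The key point is Lemma~\ref{lemma:compatible_close} together with the Large-facets hypothesis: for each line $l' \in L_l$, the birthpoint $b_{l'}^I$ lies on $l'$ at the intersection with $\lowerb{I}$, and since $l'$ passes through (a $\delta/2$-neighborhood of) $Q \subseteq F$ and the facet extends a full $2\delta$ in each spanning direction, $b_{l'}^I$ must actually land on $F$, hence $(b_{l'}^I)_i = c$. Then the ``if'' test in Algorithm~\ref{algo:label_endpoints} fires for dimension $i$ with $c_i = c$, so the label $(i,c)$ is appended. One must also check the endpoints are non-trivial and in the appropriate region, which follows from the facet meeting $K$ and $L$ filling $K^{2\delta}$.

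\textbf{Injectivity and the converse.} For injectivity, suppose two facets $F \neq F'$ both get sent to the same label $(i,c)$. Then both have codirection $i$ and both lie in $\{x_i = c\}$; by Assumption~3 (Locally small complexity), any $\delta$-ball meets at most one facet of codirection $i$, and since a single surrounding set $L_l$ has $\infty$-diameter at most $2\delta$ — wait, more carefully, the set of birthpoints on $L_l$ lies in a $\delta$-ball by Proposition~\ref{prop:local_bound}(2) applied with the codirection-$i$ coordinate fixed, or directly since they all share coordinate $c$ and differ by at most $\delta$ in the others via Lemma~\ref{lemma:compatible_close} — so all endpoints carrying this label lie in a single $\delta$-ball and hence on a single facet, forcing $F = F'$. (Here I would be careful to argue that \emph{all} occurrences of the label $(i,c)$, across all surrounding sets, still trace back to one facet: two surrounding sets producing the same $c$ would, by a chaining argument along the grid and Lemma~\ref{lemma:compatible_close}, stay within the same hyperplane-slice, and Assumption~3 prevents two codirection-$i$ facets from coexisting in $\{x_i=c\}$ close enough to be seen — actually two facets in the \emph{same} hyperplane are impossible by convexity/connectivity of $I$ unless they are the same facet.) For the converse — every label output is a real facet — I use that the algorithm only outputs $(i,c)$ when $2^{n-1}$ birthpoints of a surrounding set $L_l$ all have $i$-th coordinate $c$; these points lie in $\lowerb{I}$, and I invoke Lemma~\ref{lemma:boundary_bound} / Proposition~\ref{prop:local_bound} to conclude the true birthpoint of any diagonal line through $\conv{L_l}$ also has $i$-th coordinate within $\delta$ of $c$, then use discreteness of the presentation (the boundary is a union of flat facets) plus Assumption~2 (Large holes, which rules out the degenerate case where $l$ misses $\supp I$ while its neighbors hit a facet) to pin down that $\{x_i = c\}\cap\lowerb{I}$ genuinely contains an $(n-1)$-dimensional facet.

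\textbf{Main obstacle.} The delicate part is the converse direction combined with injectivity: ruling out that the algorithm ``hallucinates'' a label, or that two genuine facets collide onto one label. This is exactly where Assumptions~2 and~3 of Definition~\ref{def:delta_discretely_presented} do the work, and the figure with the glued $L$-shapes (Figure~\ref{fig:interval_small_hole}) shows the claim is false without them. I expect the bookkeeping to be somewhat intricate in dimension $n \ge 3$ because a surrounding set has $2^{n-1}$ lines and one must track how their birthpoints distribute over possibly several facets meeting at a corner; the cleanest route is to reduce everything to the two quantitative estimates already proved — Lemma~\ref{lemma:compatible_close} (endpoints on nearby lines move by at most the shift) and Proposition~\ref{prop:local_bound} (endpoints over a surrounding set are confined to a $\delta$-ball) — and then let the Large-facets and Locally-small-complexity hypotheses convert these metric bounds into the exact combinatorial bijection.
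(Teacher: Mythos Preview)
Your proposal is on the right track and your forward direction (facet $\Rightarrow$ label) matches the paper's argument almost verbatim. Two points are worth noting, though.

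\textbf{Injectivity is automatic.} By Definition~\ref{def:facet}, a lower facet \emph{is} the set $\{x_i = c\}\cap\lowerb{I}$; it is therefore determined by the pair $(i,c)$. So two distinct facets cannot map to the same label, and your paragraph invoking Assumption~3 and Proposition~\ref{prop:local_bound} to separate them is unnecessary. (Your parenthetical ``two facets in the same hyperplane are impossible \dots\ unless they are the same facet'' is exactly the point, and it holds by definition rather than by any geometric argument.)

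\textbf{The converse is more direct in the paper.} For the direction label $\Rightarrow$ facet, you route through Lemma~\ref{lemma:boundary_bound}, Proposition~\ref{prop:local_bound}, and Assumption~2 (Large holes). This works, but the paper's argument is shorter and avoids all of these: if every birthpoint $b_{l'}^I$ for $l'\in L_{l_0}$ has $i$-th coordinate $c$, then these $2^{n-1}$ points are exactly the intersections $l'\cap\{x_i=c\}$, which form a grid in that hyperplane with a minimal and a maximal element. By Lemma~\ref{lemma1} the full rectangle between them lies in $\supp{I}$, and any point strictly below this rectangle along the diagonal would also lie strictly below the maximal birthpoint, contradicting its membership in $\lowerb{I}$. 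Hence the rectangle sits inside $\lowerb{I}\cap\{x_i=c\}$, which is therefore $(n-1)$-dimensional and a genuine facet. Assumption~2 is not needed here (it matters later, in Proposition~\ref{prop:exact_recovery}).
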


\begin{proof}
	We first prove the result for birthpoints and facets of $\lowerb{I}$.

	Let $F$ be a facet of $\lowerb{I}$.
	Let $l_F\in L$ be a diagonal line intersecting $F$, and $b_F\in\R^n$ be the associated birthpoint.
	By Definition~\ref{def:delta_discretely_presented}, item (1), there exists an $(n-1)$-hypercube $Q_F^{b_F}\subseteq F$ of side length $2\delta$
	such that $b_F\in Q_F^{b_F}$. %
	This ensures that for any dimension $i$ that is not in the codirection: $i\in \llbracket 1,n\rrbracket\backslash\codir{F}$, one has
	either $b_F + \delta e_i\in Q_F^{b_F}$ or $b_F - \delta e_i \in Q_F^{b_F}$.
	Since $L$ is the $\delta$-grid of lines associated to $K^{2\delta}$, and since $Q_F^{b_F}$ is an $(n-1)$-hypercube,
	there exists a line $l_0\in L$ such that $l_F$ belongs to the surrounding set $L_{l_0}$,
	and such that the birthpoints corresponding to the lines in $L_{l_0}$ are all in $Q_F^{b_F}$. %
	This means that $\codir{F}$ is detected as a label of $b_F$ by Algorithm \ref{algo:label_endpoints}. %

	Reciprocally, assume there exists a line $l_0\in L$ such that all birthpoints associated to the lines in the
	surrounding set $L_{l_0}$
	share a coordinate along dimension $i\in\llbracket 1,n \rrbracket$, so that $i$ is a label detected by Algorithm \ref{algo:label_endpoints}.
	Then, the set of birthpoints $B_{L_{l_0}}$ has a minimal element, and thus its convex hull $\conv{B_{L_{l_0}}}$ is in $\lowerb{I}$.
	Since $\conv{B_{L_{l_0}}}$ is an $(n-1)$-hypercube of codirection $i$, it must be associated to a facet of $\lowerb{I}$ of codirection $i$ as well.

	The proof extends straightforwardly for deathpoints. \\
\end{proof}

Now that we have proved that all facets can be detected with $\delta$-grids of lines and $\delta$-discretely presented modules, we can state our following result,
which claims that it is possible to {\em exactly} recover the underlying module under the same assumptions. \\

\begin{lemma}[Exact recovery of intervals]\label{lemma:exact_recov_int}
	Let $\delta > 0$ and $K=\rectangle{\alpha}{\beta}$ be a rectangle in $\R^n$, where $\alpha \leqn \beta$. Let
	${k^{I}}$ be a $ \delta$-discretely presented interval module in $K$, and let $L:=L_\delta(K^{2\delta})$ be the $\delta$-grid of lines of the offset $K^{2\delta}$.
	Let {$B=L\cap I$}, and $\birthLcp{B}$ and $\deathLcp{B}$ be the $L$-birth and death corners of $I$ computed by Algorithm~\ref{algo:generate_corners}, and let
	$\tilde I=\bigcup_{c \in \birthLcp{B}}\bigcup_{c' \in \deathLcp{B}} R_{c,c'}$ be the approximation computed by Algorithm~\ref{algo:approx_inter}.
	Then,
	one has:
	\begin{equation}
		\disti\left({k^{I}},{k^{\tilde I}}\right)=\distb\left({k^{I}},{k^{\tilde I}}\right)=0.
	\end{equation}

\end{lemma}

\begin{proof}
	As interval modules are characterized by their support, it is enough to show that $ \overline{I} = \overline{\tilde I}$.
	In the following, we thus assume that $I$ is closed in $\overline{\R}^n$.
	We will also use an additional definition. Let $b$ be an infinite corner computed by Algorithm~\ref{algo:generate_corners}. We say that $b'$ is a \emph{pseudo birth corner} for $b$ if:
	\begin{enumerate}
		\item $b'_i=b_i$ for all $i\in\llbracket 1,n \rrbracket \backslash \mathcal J$,
			and for each
			dimension $j \in \mathcal J$, %
			there exists a hyperplane of codirection $j$ intersecting $K$ such that $\bigcap_j H_j \ni b'$.
			The set $\mathcal J$ is called the \emph{codirection} of $b'$ and denoted
			with $\codir{b'}$, and the set $\llbracket 1,n\rrbracket \backslash \mathcal J$ is called the {\em direction} of $b'$ and is denoted with $\dir{b'}$.
		\item there exists a line $l_0 \in L$ such that:
			\begin{enumerate}
				\item $b' \in \conv{L_{l_0}} \cap K^{2 \delta} \backslash K$,
				\item for each line $l \in L_{l_0}$, the endpoint $b_l^I$ is non trivial,
				\item for each dimension $j\in\mathcal J$, %
					there exists $l_j \in L_{l_0}$ such that $b_{l_j}^I\in H_j$.
			\end{enumerate}
	\end{enumerate}

	Note that codirections can be extended to any finite corner (i.e., that is
	potentially not the pseudo corner of an infinite corner) straightforwardly, and that pseudo death corners can be defined by symmetry.
	We now prove Proposition~\ref{prop:exact_recovery}. \\

	\textbf{We first show the inclusion ${\tilde I}\subseteq {I}$.} More specifically, we have to prove that the
	corners computed by Algorithm~\ref{algo:generate_corners}
	all belong to ${I}$. %
	A key argument that we will use several times comes from the following lemma,
	which allows for a local control of the boundary of ${I}$ using the hyperplanes associated to specific corners.

	\begin{lemma}\label{lemma:local_control}
		Let $b$ be a birthpoint (resp. deathpoint) of $I$ in $K^ \delta$,
		and $l_0\in L$ be the line such that $b \in \conv{L_{l_0}}$ (this line exists since $L$ fills $K^{\delta}$).
		Then, one has the following:
		\begin{enumerate}
			\item for any facet $F$ of $\lowerb{I}$ (resp. $\upperb{I}$) containing $b$,
				there exists a line $l_F \in L_{l_0}$ such that $b_{l_F}^I \in F$ (resp. $d_{l_F}^I \in F$).
			\item for any dimension $i$, there exists at most one facet of codirection $i$ intersecting the
				set of birthpoints (resp. deathpoints) $ \left\{ b_l^I : l \in L_{l_0}\right\}$ (resp. $ \left\{ d_l^I : l \in L_{l_0}\right\}$.
				\item let $b'_{L_{l_0}}$ (resp. $d'_{L_{l_0}}$) be the finite corner generated by $L_{l_0}$. Then, one has:
					\begin{align*}
						& \conv{L_{l_0}}\cap \lowerb{I} \cap K^{2\delta} \subseteq \bigcup_{i \in \codir{b'}} \left\{ x \in \R^n: x_i  = b'_i \right\}         \\
						\text{ (resp.           } & \conv{L_{l_0}}\cap \upperb{I} \cap K^{2\delta} \subseteq \bigcup_{i \in \codir{d'}} \left\{ x \in\R^n: x_i  = d'_i \right\}\text{)}. \\
					\end{align*}
			\end{enumerate}
		\end{lemma}

		\begin{proof} We only show the result for birthpoints since the arguments for deathpoints are the same. Let $b\in\lowerb{I}$ be a birthpoint in $K^ \delta$.\\

			\textbf{Proof of (1).} Let $F$ be a facet containing $b$.
			According to Definition~\ref{def:delta_discretely_presented}, item (1), there exists an $(n-1)$-hypercube $Q^b_F$ of side length $2 \delta$
			such that $Q^b_F\subseteq F$ and $b\in Q^b_F$.
			Since $L$ is a grid, there exists a line $l\in L$ with $d_{\infty}(b,l) < \delta$ intersecting $Q^b_F$.
			Now, since $b \in \conv{L_{l_0}}$, one has $d_{\infty}(l \cap H_F,L_{l_0} \cap H_F) < \delta$, where $H_F$ is
			the hyperplane containing $F$; thus, $l \in L_{l_0}$ (the argument is the same than in the proof of Proposition~\ref{prop:local_bound}, first paragraph).\\

			\textbf{Proof of (2).} By Proposition~\ref{prop:local_bound}, item (2),
			the birthpoints associated to lines of $L_{l_0}$ are all contained in a ball of radius $\delta$.
			Thus, the unicity of the facets with given codirection comes straightforwardly from Definition \ref{def:delta_discretely_presented}, item (3). \\

			\textbf{Proof of (3).} Note that the birthpoint $b$ is obviously included in
			the facets of $\lowerb{I}$ that contain it, which is a subset of the facets associated to the birthpoints of the lines in $L_{l_0}$.
			Now, as Lemma~\ref{lemma:label_exact} ensures that the birthpoints
			associated to lines in $L_{l_0}$ are correctly labelled, the corner generated by $L_{l_0}$ must be on the intersection of the facets containing $b$.
			This ensures that:
			\begin{equation*}
				b\in \bigcup_{i \in \mathrm{ codir}(b')} \left\{ x \in\R^n : x_i  = b'_i \right\}.
			\end{equation*}

			Since these arguments do not depend on $b\in\conv{L_{l_0}}$, the result follows.
		\end{proof}

		Now that we have Lemma~\ref{lemma:local_control}, we can prove that finite
		and infinite corners belong to $\supp{I}$. We will prove the results for birth corners, but the arguments for death corners are symmetric.\\

		\textbf{Finite corners.} %
		Let $b$ be a finite birth corner, associated to a set of consecutive lines $L_{l_0}$ for some line $l_0\in L$. %
		By assumption, each birthpoint $b_l^I$, for $l \in L_{l_0}$, is nontrivial; and thus any birthpoint in $\conv{L_{l_0}}$ is nontrivial as well, using
		Definition~\ref{def:delta_discretely_presented}, item (2). Let $l \in \conv{L_{l_0}}$ be the diagonal line passing through $b$.
		\\
		Using Lemma~\ref{lemma:local_control}, one has:
		\begin{equation*}
			b_{l}^I \in \conv{L_{l_0}} \cap \lowerb{I} \cap K^{2\delta} \subseteq \left(\bigcup_{i\in \codir{b}} \left\{ x \in \R^n :x_i=b_i\right\}\right) \cap l = \left\{ b\right\}.
		\end{equation*}
		Thus $b=b_l^I$ and $b \in {I}$.
		\\

		\textbf{Infinite corners.} %
		Let $b$ be an infinite birth corner, and %
		let $b'$ be the minimal (w.r.t. $\leqn$) pseudo birth corner for $b$, which is well defined by construction of $b$ (see Algorithm~\ref{algo:generate_corners}).
		Let $L_{l_0}$ be the associated set of consecutive lines $L_{l_0}$, for some line $l_0 \in L$.
		We will show that, if $j$ is a free coordinate of $b'$, i.e., if $j\in\dir{b'}$, then $b'_j < \alpha_j$ (recall that $K$ is the rectangle $\rectangle{\alpha}{\beta}$).
		The reason we want to prove such inequalities is that they directly lead to
		the result. Indeed, if $b'_j < \alpha_j$ for any $j\in\dir{b'}$, then $b' - t \sum_{j\in\dir{b'}} e_j$ belongs to $\lowerb{I}$ for any $t>0$,
		since otherwise the line $\{b' - t \sum_{j\in\dir{b'}} e_j : t >0\}$ would have to intersect a facet $F\subseteq \lowerb{I}$ of codirection $j$ for some $j\in \dir{b'}$,
		which would not intersect $K$, contradicting Definition~\ref{def:delta_discretely_presented}, item (4).

		Let $j\in \dir{b'}$ be a free coordinate. By contradiction, assume that %
		$b'_j \ge \alpha_j$, and let $b^j$ denote the pseudo birth corner generated by $L_{l_0- \delta e_j}$.
		In particular, this means that, for any $l\in L_{l_0}$,
		$l- \delta e_j \in L$ and $L_{l- \delta e_j} \subseteq L$ since $L$ fills $K^{ \delta}$.
		Now, if for every line $l \in L_{l_0}$ such that $l=l_0+\ora v$ with $\ora v_j=0$, one has that $b_l^I$ and $b_{l- \delta e_j}^I$
		are on the same facets, then one has $b_{l- \delta e_j}^I = b_l^I - \delta e_j$, and
		the pseudo corner $b^j$ %
		is equal to $b' - \delta e_j$ by construction, as per Algorithm \ref{algo:generate_corners}.
		Moreover, one has $b^j = b' - \delta e_j \leqn b'$, contradicting the fact that $b'$ is minimal.
		Hence, there is at least one line $l\in L_{l_0}$, $l=l_0+\ora v$ with $\ora v_j=0$, such that $b_l^I$ and $b_{l- \delta e_j}^I$ are not
		on the same facets, in other words, there exists a facet
		$F_j$ of $\lowerb{I}$ of codirection $j$ that intersects the (half-open) segment $[b_l^I - \delta e_j, b_l^I)$.
		In order to locate that facet more precisely, we will prove the following lemma.

		\begin{lemma}\label{lemma:decreasing_coord}
			For any $i\in \llbracket 1,n\rrbracket$ and $s,t\in\R$ such that $s<t$, one has $(b^I_{l-t e_i})_i \leq (b^I_{l-s e_i})_i$.
		\end{lemma}

		\begin{proof}
			Without loss of generality, assume $s=0$.
			Since $b^I_l - t e_i \in l-te_i$, it follows that $b^I_l-te_i$ and $b^I_{l-te_i}$ are comparable. Moreover, one must have $b^I_l-te_i \leqn b^I_{l-te_i}$, otherwise one would have
			$b^I_l \sgn b^I_l-te_i \sgn b^I_{l-te_i}$, contradicting Lemma~\ref{rem:flat_rect}.
			If the points are equal, i.e., $b^I_l-te_i = b^I_{l-te_i}$, then one has $(b^I_l)_i \geq (b^I_{l-te_i})_i$. Otherwise, if $b^I_l-te_i \sln b^I_{l-te_i}$, then:
			\begin{equation*}
				\forall k\neq i, \, (b^I_{l-te_i})_k > (b^I_{l})_k.
			\end{equation*}
			Moreover, since $b^I_l$ and $b^I_{l-te_i}$ cannot be comparable as per Lemma~\ref{rem:flat_rect}
			one must have $(b^I_{l -te_i})_i\le (b^I_l)_i$. %

		\end{proof}
		Let $H_j = \left\{ x\in\R^n : x_j = c_j \right\}$ be the hyperplane associated to $F_j$.
		Then, by Lemma~\ref{lemma:decreasing_coord}, one has:
		\begin{equation*}
			(b_{l- \delta e_j}^I)_j \le c_j < (b_l^I)_j.
		\end{equation*}

		Since the lines $l$ and $l-\delta e_j$ both belong to the surrounding set $L_{l_0-\delta e_j}$,
		it follows from Lemmas~\ref{lemma:label_exact}, and~\ref{lemma:local_control}, item (3), that
		$\codir{b^j} \supseteq \codir{b'}\cup \left\{ j \right\}$. Moreover, since the facets of $\lowerb{I}$ associated to $\codir{b^j}$ are unique in a $\delta$-ball around $b^j$,
		as per Definition~\ref{def:delta_discretely_presented}, item (3), they all have a unique associated value $c_i$ (corresponding to their associated hyperplanes).

		Finally, we will show that $b^j\leqn b'$.
		Let $i\in\llbracket 1,n \rrbracket$ be an arbitrary dimension.
		\begin{itemize}
			\item If $i \in \codir{b'}$, then $b^j_i = b'_i$.
			\item If $i \in \codir{b^j} \backslash \codir{b'}$, then $b^j_i \in \left\{
				c_i, \min_{l \in L_{l_0-\delta e_j}}(b_l^I)_i\right\} \le \min_{l \in L_{l_0}}(b_l^I)_i = b'_i$, with a strict inequality for $i=j$.
			\item If $i \in \dir{b^j} \subseteq \dir{b'}$, then
				$b^j_i =  \min_{l \in L_{l_0- \delta e_j}}(b_l^I)_i \le  \min_{l \in L_{l_0}}(b_l^I)_i = b'_i$.
		\end{itemize}
		Hence, one always has $b^j_i\leq b'_i$, and thus $b^j \sln b'$, which contradicts the fact that $b'$ is minimal. Thus, one must have $b'_j < \alpha_j$.
		\\

		\textbf{We now show that $ {I} \subseteq {\tilde I}$.} %
		Let $x \in {I}$. We will show that there exists a birth corner $c$ such that $c \leqn x$.
		Let $\mathcal H$ be the family of hyperplanes associated to the facets of $\lowerb{I}$. %
		The corner $c$ will be defined as the limit of a sequence of points $\{x^{(k)}\}_{k\in\mathbb{N}^*}$in $ \overline{\R}^n$, defined by induction with:
		\begin{enumerate}
			\item $x^{(1)} = \inf \left\{ x - t\cdot \mathbf{1} : t\ge 0\right\} \cap \supp{I}$. Then, one has the two following possibilities:
				\begin{itemize}
					\item either $x^{(1)} = -\boldsymbol \infty$,
						and
						we let $c:= x^{(1)}$.
					\item or there exists a maximal subset of hyperplanes $\mathcal H^1 \subset \mathcal H$, $\mathcal H^1 \neq \varnothing$,
						such that $x^{(1)} \in \cap_{H \in \mathcal H^1}H=:H_1$.
						Let $\mathcal  J^1 \subseteq \llbracket 1,n \rrbracket$ be the set of free coordinates in $H_1$, i.e., those dimensions such that $j\in \mathcal J^1 \Longleftrightarrow x^{(1)}- e_j \in H_1$.
				\end{itemize}
			\item $x^{(2)} = \inf \left\{ x^{(1)} - t \cdot \sum_{j\in \mathcal J^1}e_j %
				: t\ge 0\right\} \cap \supp{I}$. %
				Then,  one has the two following possibilities:
				\begin{itemize}
					\item either $x^{(2)}$ is at infinity in $H_1$, i.e., $x^{(2)}_j = -\infty$ if $j \in \mathcal J^1$ and $x^{(2)}_j = x^{(1)}_j$ otherwise, and we let $c:=x^{(2)}$.
					\item or there exists a maximal subset of hyperplanes $\mathcal H^2\supsetneq \mathcal H^1$ %
						such that $x^{(2)} \in \cap_{H \in \mathcal H^2}H=:H_2$. %
						Let $\mathcal J^2 \subseteq \llbracket 1,n \rrbracket$ be the set of free coordinates in $H_2$, i.e., those dimensions such that $j\in \mathcal J^2 \Longleftrightarrow x^{(2)}- e_j \in H_2$.
				\end{itemize}
			\item For $k\ge 3$,  $x^{(k+1)} = \inf \left\{ x^{(k)} - t \cdot \sum_{j\in \mathcal J^k}e_j : t\ge 0\right\}\cap \supp{I}$. %
				Then,  one has the two following possibilities:
				\begin{itemize}
					\item either $x^{(k+1)}$ is at infinity in $H_k$, i.e., $x^{(k+1)}_j = -\infty$ if $j \in \mathcal J^k$ and $x^{(k+1)}_j = x^{(k)}_j$ otherwise, and we let $c:=x^{(k+1)}$. %
					\item or there exists a maximal subset of hyperplanes $\mathcal H^{k+1}\supsetneq \mathcal H^k$ %
						such that $x^{(k+1)} \in \cap_{H \in \mathcal H^{k+1}}H=:H_{k+1}$. %
						Let $\mathcal J^{k+1} \subseteq \llbracket 1,n \rrbracket$ be the set of
						free coordinates in $H_{k+1}$, i.e., those dimensions such that $j\in \mathcal J^{k+1} \Longleftrightarrow x^{(k+1)}- e_j \in H_{k+1}$.
				\end{itemize}
		\end{enumerate}

		If this sequence stops at step one, i.e., $c=x^{(1)}=-\boldsymbol \infty$,
		then every birthpoint of $I$ is at $-\boldsymbol\infty$, the only birth corner is $c=-\boldsymbol\infty$, and one trivially has
		$c\leqn x$. %
		Hence,
		we assume in the following that $c$ is obtained after at least one iteration of the sequence.
		Note that this sequence of points has length at most $n$.
		Let $c^-$ and $c$ be the penultimate and last elements of the sequence respectively,
		and let $\mathcal J^-$ be the set of free coordinates associated to $c^-$. %
		By construction, one has:
		\begin{equation*}
			c \leqn c^- \leqn \cdots\leqn x^{(2)}\leqn x^{(1)} \leqn x.
		\end{equation*}
		We now show that $c$ is indeed a birth corner.
		If $c$ is finite,
		then it must belong to the intersection of $n$ hyperplanes, %
		and it is thus a finite birth corner.
		Hence, we assume now that $c$ is not finite. We will construct a minimal pseudo birth corner from $c^-$,
		and show that $c$ is its associated infinite birth corner. %
		We will consider two different cases, depending on whether $c^-$ is close to $K=\rectangle{\alpha}{\beta}$ or not.
		If $c^- \in K^\delta$, the filling property of $L$ and the size of the facets of $\lowerb{I}$ ensure that $c^-$ is itself %
		a minimal pseudo birth corner, associated to $c$, which is thus an infinite birth corner. %
		If $c^- \notin K^{\delta}$, then let $ \overrightarrow v \in \mathbb R^n$ be a vector that pushes back $c^-$ into $K^\delta$, i.e.,
		such that, for any dimension $i\in \mathcal J^- $, one has: %
		$$\alpha_i - \delta \leq (c^-+\overrightarrow v)_i < \alpha_i,
		$$
		and $ \ora v_i = 0$ if $i \notin \mathcal J^-$.
		Let $S$ be the segment $[c^-, c^- + \ora v]$.
		We have the two following cases:
		\begin{enumerate}
			\item Assume $S \subseteq \lowerb{I}$. Then $c^- + \ora v \in \supp{I} \cap K^\delta$, and there exists a line $l \in L$
				such that $c^-+\ora v \in \conv{L_l}$. Let $c^l$ be the pseudo birth corner associated to $L_l$. %
				Since one has $c^l_j < \alpha_j$ for any dimension $j \in \mathcal J^-$, %
				it follows that $\mathcal J^- \subseteq \dir{c^l}$. %
				Furthermore, since $c^-+\ora v$ %
				belongs to the same facets than $c$ and $c^-$, and since  $c^-+\ora v \in \conv{L_l}$
				one has $ \codir{c^l} \supseteq \codir{c}$ and $\dir{c} = \mathcal J^- $. %
				Thus, $c$ is an infinite birth
				corner associated to the minimal pseudo birth corner $c^l$.

			\item Assume $S \not\subseteq \lowerb{I}$. In that case, there must be a facet of codirection $j$, for some $j \in \mathcal J^-$, that intersects $S$.
				Since
				one has $c^-_j \leq (c^-+\ora v)_j  < \alpha_j$ for any $j \in \mathcal J^-$, this means that the facet would not intersect $K$,
				which yields to a contradiction as per Definition~\ref{def:delta_discretely_presented}, item (4).
		\end{enumerate}
		This concludes that $ {I} \subseteq {\tilde I}$, and the equality between these supports holds. %
	\end{proof}

	\cref{lemma:exact_recov_int} extends to the following proposition, whose proof is immediate from the definition of {induced} matchings. \\ %

	\begin{proposition}[Exact recovery]\label{prop:exact_recovery}%
		Let $\Mbb$ be a {f.p.} interval decomposable $n$-parameter persistence module.
		Let $K$ be a rectangle in $\R^n$ %
		{that \compacityassumption{} $\Mbb$},
		and $L:=L_\delta(K^{2\delta})$ be the $\delta$-grid of lines of the offset $K^{2\delta}$.
		Assume that all interval summands of $\Mbb$ are $\delta$-discretely presented, and
		let ${\tilde\Mbb^{\mMMA{}}_\delta :=\mMMA{}(\M,L,\matching)}$, where $\matching$ %
		is a matching function that commutes with the induced matching function $\sigma_\Mbb$.
		Then, one has:
		$$\disti(\M,\tilde \M^{\mMMA{}}_\delta)=\distb(\M,\tilde \M^{\mMMA{}}_\delta)=0.$$
	\end{proposition}

	Note that f.p. interval decomposable modules are always made of
	$\delta$-discretely presented interval summands, for small enough yet positive
	$\delta$ (one can take for instance the smallest distance $\delta_{\rm exact} > 0$ between two distinct graded Betti numbers).

	One might wonder whether the usual distances between barcodes,
	such as the bottleneck or Wasserstein distances,
	could be used to define matching functions that commute with induced matching functions.
	Indeed, a major advantage of, e.g., Wasserstein distances,
	is that their associated matching functions are usually unique.
	However, when the space $\delta$ between two lines is too large,
	the matching functions induced by Wasserstein distances can still fail to be induced, %
	as shown in Figure~\ref{fig:wasserstein-not-working}. In the next section, we discuss how to design such matching functions from compatible matching functions.

	\begin{figure}[h]
		\centering
		\includegraphics[width=0.4\textwidth]{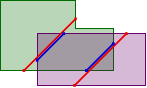}
		\caption{\label{fig:wasserstein-not-working}
			{Example of interval decomposable module with two interval summands (green and purple), and its barcodes
				along two lines (here the two couples of red-blue bars).
				Any matching function induced by, %
				e.g., Wasserstein distances between the barcodes,
				will match the first red bar with the second red bar and the first blue bar with the second blue bar;
			however, this matching is not induced.} %
		}
	\end{figure}

\section{\rebuttal{Finding compatible and induced matching functions}}
\label{sec:matching_mma}

\rebuttal{In this section, we discuss how to design matching functions that are compatible or that commute with induced matching functions,
	so as to satisfy the assumptions of~\cref{prop:tildeIcandidate}, \cref{prop:approx} and \cref{prop:exact_recovery}. In particular, in~\cref{app:compat_exact} we restrict to interval decomposable modules, and we show that compatible and usual matching functions always commute with induced matching functions for small enough $\delta$ and generic assumptions. Then, in~\cref{subsec:vineyards}, we show that the vineyards algorithm induces compatible matching functions when $n=2$.
	}

\subsection{Induced matching functions for interval decomposable modules}\label{app:compat_exact}

In this section, we show, given some interval decomposable module $\Mbb$, that compatible matching functions commute with the induced matching function $\matching_{\M}$  under some specific conditions. In the rest of this section, given a matching function $\matching$ that commutes with $\matching_\M$, we will also call $\matching$ \rebuttal{{\em induced}} for the sake of simplicity. Before stating the main result, we first show a technical lemma that relates the locations of endpoints of compatible bars with each other. \\

\begin{lemma}\label{lemma:consecutive_segment}
	Let $l_1$ and $l_2$ be two $\delta$-consecutive lines, and let $[b_1,d_1):=\barcode{\restr{\rebuttal{k^{I}}}{l_1}}$ be the bar of a \rebuttal{f.p.} interval module along $l_1$. %
	Let $[b_2,d_2)$ be a bar along $l_2$ that is compatible with $[b_1,d_1)$. Then, $d_2$ (resp. $b_2$) is included in a segment of size $ \delta$ in $l_2$. %
\end{lemma}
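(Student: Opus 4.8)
The plan is to prove this by a short direct computation on diagonal lines, in the same spirit as Lemma~\ref{lemma:compatible_close}. First I would normalize the configuration. Since $l_1$ and $l_2$ are $\delta$-consecutive, $l_2 = l_1 \pm \delta{\bf u}$ for some ${\bf u}\in\{0,1\}^n\setminus\{{\bf 0},{\bf 1}\}$; because translating a diagonal line by a multiple of its direction vector ${\bf 1}$ leaves the line unchanged, one has $l_1 - \delta{\bf u} = l_1 - \delta{\bf u} + \delta{\bf 1} = l_1 + \delta({\bf 1}-{\bf u})$, and since ${\bf 1}-{\bf u}$ is again in $\{0,1\}^n\setminus\{{\bf 0},{\bf 1}\}$, I may assume without loss of generality that $l_2 = l_1 + \delta{\bf u}$ with ${\bf u}\in\{0,1\}^n$. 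If any endpoint involved is infinite, the statement holds trivially with the conventions of Lemma~\ref{lemma:compatible_close}, so I assume all endpoints are finite.

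Next I would exhibit the segment explicitly. Both $d_1 + \delta{\bf u}$ and $d_1 + \delta{\bf u} - \delta{\bf 1}$ lie on $l_2$: the first is the image of $d_1 \in l_1$ under the translation $(\cdot) + \delta{\bf u}$ carrying $l_1$ onto $l_2$, and the second differs from it by the diagonal vector $-\delta{\bf 1}$. They differ by $\delta{\bf 1}$, so the segment $\Sigma_d := [\,d_1 + \delta{\bf u} - \delta{\bf 1},\; d_1 + \delta{\bf u}\,] \subseteq l_2$ has $\norm{\cdot}_\infty$-length exactly $\delta$. Crucially, $\Sigma_d$ is determined by $d_1$ and the shift $\delta{\bf u}$ alone, hence is independent of $d_2$; the analogous segment $\Sigma_b := [\,b_1 + \delta{\bf u} - \delta{\bf 1},\; b_1 + \delta{\bf u}\,]$ will serve for birthpoints.

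It then remains to check that $d_2 \in \Sigma_d$. Since $d_2$, $d_1 + \delta{\bf u}$ and $d_1 + \delta{\bf u} - \delta{\bf 1}$ all lie on the diagonal line $l_2$, they are pairwise comparable, and any two distinct of them are strictly comparable in every coordinate. If $d_2 > d_1 + \delta{\bf u}$, then since ${\bf u} \ge {\bf 0}$ we get $d_2 > d_1$ coordinatewise, so $\rectangle{d_1}{d_2}$ is not flat, contradicting compatibility (Remark~\ref{rem:flat_rect}). If $d_2 < d_1 + \delta{\bf u} - \delta{\bf 1} = d_1 - \delta({\bf 1}-{\bf u})$, then since ${\bf 1}-{\bf u} \ge {\bf 0}$ we get $d_2 < d_1$ coordinatewise, so $\rectangle{d_2}{d_1}$ is not flat, again contradicting compatibility. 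Hence $d_1 + \delta{\bf u} - \delta{\bf 1} \le d_2 \le d_1 + \delta{\bf u}$, i.e. $d_2 \in \Sigma_d$; the birthpoint statement follows verbatim with $b$ in place of $d$, using flatness of $\rectangle{b_1}{b_2}$ and $\rectangle{b_2}{b_1}$. There is essentially no obstacle in this argument — the only point requiring care is the elementary bookkeeping with $\{0,1\}^n$-vectors, in particular the fact that a strict inequality between two points of a common diagonal line is automatically coordinatewise strict, which is precisely what lets the flatness of the degenerate rectangles rule out the two bad cases.
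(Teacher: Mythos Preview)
Your proof is correct and follows essentially the same geometric idea as the paper: constrain $d_2$ to a $\delta$-length subsegment of $l_2$ by ruling out $d_2 > d_1 + \delta{\bf u}$ and $d_2 < d_1 + \delta{\bf u} - \delta{\bf 1}$ via flatness of the compatibility rectangles. The paper expresses the segment as $C = [B_\delta(d_1)\cap l_2]\setminus[\{z>d_1\}\cup\{z<d_1\}]$ and appeals to Lemma~\ref{lemma:compatible_close}, whereas you name the segment $\Sigma_d$ explicitly and argue directly from compatibility; since Lemma~\ref{lemma:compatible_close} is stated for endpoints of a \emph{single} indicator module rather than for merely compatible bars, your self-contained argument is arguably cleaner here, but the underlying mechanism (Remark~\ref{rem:flat_rect}) is identical and in fact your $\Sigma_d$ coincides with the paper's $C$.
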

\begin{proof}
	Applying Lemma~\ref{lemma:compatible_close},
	one has:
	\begin{equation*}
		d_2 \in C :=
		\big[B_{ \delta }(d_1)\cap l_2\big]
		\backslash
		\big[ \left\{ z\in \mathbb R^n : z>d_1 \right\} \cup \left\{ z\in \mathbb R^n : z<d_1 \right\}\big].
	\end{equation*}
	Since $C$ is a nonempty, totally ordered set, we can define $y := \min C$. By construction, there exists a dimension $i$ such that
	$y_i\ge (d_1)_i$, and thus $C$ must be included in the segment $[y, y+\delta\cdot\mathbf{1}]$ along $l_2$. %

	The proof applies straightforwardly to $b_2$ by symmetry.
\end{proof}

Since bars that are matched under an \rebuttal{induced} matching function are always compatible,
one way to construct an \rebuttal{induced} matching function between two barcodes is therefore to isolate, among all possible matching functions, the ones %
such that matched bars are compatible. If this family contains a single element, it must be the \rebuttal{induced} matching we are looking for.
This typically happens for interval decomposable multi-parameter persistence modules whose summands are sufficiently separared, as we show in the proposition below. \\

\begin{proposition}\label{prop:general_exact_matching}
	Let $\M= \bigoplus_{I \in \mathcal I}\rebuttal{k^{I}}$ be a \rebuttal{f.p.} interval decomposable $n$-parameter persistence module.
	Let $\delta > 0$, and $\rebuttal{k^{I}},\rebuttal{k^{I'}}$ be two interval summands in the decomposition of $\M$. %
	Assume that the two following properties are satisfied:
	\begin{enumerate}
		\item Let $l \subset \R^n$  be a diagonal line such that ${I}\cap l\neq \varnothing$ and ${I'}\cap l\neq \varnothing$.

		      Then, one has either $\norm{ b_l^I - b_l^{I'}}_\infty > \delta$ or $ \norm{d_l^I - d_l^{I'}}_\infty > \delta$.
		      In other words, the endpoints of the bar in $\barcode{\rebuttal{ \restr{\induced{I}}{l} }}$ and of the bar in $\barcode{\rebuttal{\restr{\induced{I'}}{l}}}$
		      are at distance at least $\delta$.

		\item %
		      The bars of length at most $2\delta$ in $I$ and $I'$ are at distance at least $\delta$, i.e.,
		      if we let:
		      $$S^I:=\left\{ l:  l\cap {I} \neq \varnothing,\norm{ b_l^{I} - d_l^{I}}_{\infty} \le 2 \delta\right\},$$
		      (and similarly for $I'$),
		      one has

		      $d_{\infty}
			      \left(
			      S^I,
			      S^{I'}
			      \right)
			      > \delta/2.$

		      In other words, a small bar in $I$ cannot be too close to a small bar in $I'$.
	\end{enumerate}
	Then, the matching function $\matching_{\rm comp}$,
	induced by matching bars that are compatible together, is well-defined and \rebuttal{induced from $\Mbb$}. \\
\end{proposition}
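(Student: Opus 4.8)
\emph{Plan.} The proposition is local: a matching is only ever evaluated on $\delta$-consecutive lines of $L$, so it suffices to fix two $\delta$-consecutive lines $l,l'$ and show that there is a \emph{unique} matching $\barcode{\restr{M}{l}}\to\barcode{\restr{M}{l'}}\cup\{\varnothing\}$ whose matched pairs are all compatible, and that this matching is exact. Since $\restr{M}{l}=\bigoplus_{i\in\mathcal I}\restr{I_i}{l}$ and the support of an interval module meets a diagonal line in a segment, each $\restr{I_i}{l}$ has at most one bar, so every bar of $\barcode{\restr{M}{l}}$ is $[b_l^{I_i},d_l^{I_i}]$ for a well-defined summand $I_i$; the matching $\sigma_M$ sends it to $[b_{l'}^{I_i},d_{l'}^{I_i}]$ when $\supp{I_i}\cap l'\neq\varnothing$ and to $\varnothing$ otherwise. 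By Remark~\ref{rem:consec-compat} all pairs matched by $\sigma_M$ are compatible, and $\sigma_M$ is exact by definition; so it is enough to prove that $\sigma_M$ is the \emph{only} compatible matching, i.e.\ that $m_{\rm comp}$ is well-defined and equals $\sigma_M$. The argument splits into two claims, in which we invoke hypotheses~1 and~2 for the relevant pairs of summands.

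\emph{Claim A: a bar has at most one compatible partner.} Suppose a bar $[b_1,d_1]$ of $\barcode{\restr{M}{l}}$ is compatible with two bars $[b_2',d_2']$ (of a summand $I'$) and $[b_2'',d_2'']$ (of a summand $I''$) of $\barcode{\restr{M}{l'}}$ with $I'\neq I''$. By Lemma~\ref{lemma:consecutive_segment}, $b_2'$ and $b_2''$ lie in a common segment of $l'$ of length at most $\delta$, and likewise $d_2'$ and $d_2''$, so $\norm{b_2'-b_2''}_\infty\le\delta$ and $\norm{d_2'-d_2''}_\infty\le\delta$. But $\supp{I'}\cap l'\neq\varnothing\neq\supp{I''}\cap l'$, so hypothesis~1 forces $\norm{b_2'-b_2''}_\infty>\delta$ or $\norm{d_2'-d_2''}_\infty>\delta$, a contradiction; hence $I'=I''$ and the two bars coincide. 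The same argument with $l$ and $l'$ exchanged shows a bar of $\barcode{\restr{M}{l'}}$ has at most one compatible partner in $\barcode{\restr{M}{l}}$, which will give injectivity of $m_{\rm comp}$ on its matched part.

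\emph{Claim B: a summand missing $l'$ produces no compatible pair.} Let $I$ be a summand with $\supp{I}\cap l'=\varnothing$ but $\supp{I}\cap l\neq\varnothing$, and $[b_1,d_1]:=[b_l^I,d_l^I]$. Writing $l'=l\pm\delta{\bf u}$ with ${\bf u}\in\{0,1\}^n\setminus\{{\bf 0},{\bf 1}\}$ and using $\rectangle{b_1}{d_1}\subseteq\supp{I}$ (Lemma~\ref{lemma1}), a direct check shows that if $\norm{b_1-d_1}_\infty\ge\delta$ then $l'$ intersects $\rectangle{b_1}{d_1}$, contradicting $\supp{I}\cap l'=\varnothing$; hence $\norm{b_1-d_1}_\infty<\delta$, so $[b_1,d_1]$ is short and compatible with the empty set. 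Now suppose $[b_1,d_1]$ were compatible with a bar $[b_2,d_2]$ of a summand $I'$ along $l'$. If $\supp{I'}\cap l=\varnothing$, the same computation gives $\norm{b_2-d_2}_\infty<\delta$, so $l\in S^I$ and $l'\in S^{I'}$, whence $d_\infty(S^I,S^{I'})\le d_\infty(l,l')=\delta/2$, contradicting hypothesis~2. If instead $\supp{I'}\cap l\neq\varnothing$, then $I'$ has a bar $[b_1',d_1']$ along $l$, which by Remark~\ref{rem:consec-compat} is compatible with $[b_2,d_2]$; so $[b_1,d_1]$ and $[b_1',d_1']$ are two bars of $\barcode{\restr{M}{l}}$ both compatible with $[b_2,d_2]$, and Lemma~\ref{lemma:consecutive_segment} forces $\norm{b_1-b_1'}_\infty\le\delta$ and $\norm{d_1-d_1'}_\infty\le\delta$, contradicting hypothesis~1 applied to $I,I'$ on $l$. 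Either way no such $[b_2,d_2]$ exists.

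\emph{Conclusion and main difficulty.} Combining the claims: for a bar $[b_l^I,d_l^I]$ of $\barcode{\restr{M}{l}}$, if $\supp{I}\cap l'\neq\varnothing$ then Remark~\ref{rem:consec-compat} and Claim~A identify its unique compatible partner as $[b_{l'}^I,d_{l'}^I]$, and if $\supp{I}\cap l'=\varnothing$ then Claim~B shows it has no compatible partner (and is compatible with the empty set). Hence the only compatible matching between $l$ and $l'$ is $\sigma_M$, which is exact; by Claim~A it is injective on its image, so $m_{\rm comp}=\sigma_M$ is a well-defined exact matching. Ranging over all consecutive pairs of $L$ finishes the proof. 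The crux is Claim~B --- ruling out a short bar of one summand being accidentally compatible with a bar of another --- which is precisely what hypothesis~2 buys, the constant $\delta/2$ there being sharp since it equals $d_\infty(l,l')$ for $\delta$-consecutive diagonal lines. The only remaining care is bookkeeping around infinite endpoints, already handled by the conventions in Lemmas~\ref{lemma:compatible_close} and~\ref{lemma:consecutive_segment}.
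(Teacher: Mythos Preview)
Your proof is correct and follows essentially the same approach as the paper: reduce to a pair of $\delta$-consecutive lines, use Lemma~\ref{lemma:consecutive_segment} together with hypothesis~(1) to force uniqueness of compatible partners, and use hypothesis~(2) to handle bars whose summand misses the neighbouring line. The only structural difference is in that last case: the paper argues in one stroke that hypothesis~(2) forces any bar of $I'$ on $l_2$ to have length $>2\delta$ and observes (via the same segment bound you invoke) that such a long bar cannot be compatible with a bar of length $\le\delta$, whereas you split according to whether $\supp{I'}\cap l=\varnothing$ and reduce the second subcase back to hypothesis~(1) via Claim~A. Both routes are valid and of comparable length.
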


\rebuttal{Note that, upon using chunk reduction~\cite{fugacciChunkReductionMultiParameter2019} and infinitesimal perturbations, or whenever the graded Betti numbers of $\Mbb$ are independent and identically distributed from a non-singular distribution, it is always possible to ensure that Assumptions (1) and (2) are satisfied for a given f.p. interval decomposable module $\Mbb$ and small enough $\delta$ (see also the paragraph after~\cref{prop:exact_recovery}).}
See Figure~\ref{fig:compatibility} for an illustration of Assumptions (1) and (2).

\begin{figure}
	\centering
	\includesvg[width=.9\textwidth]{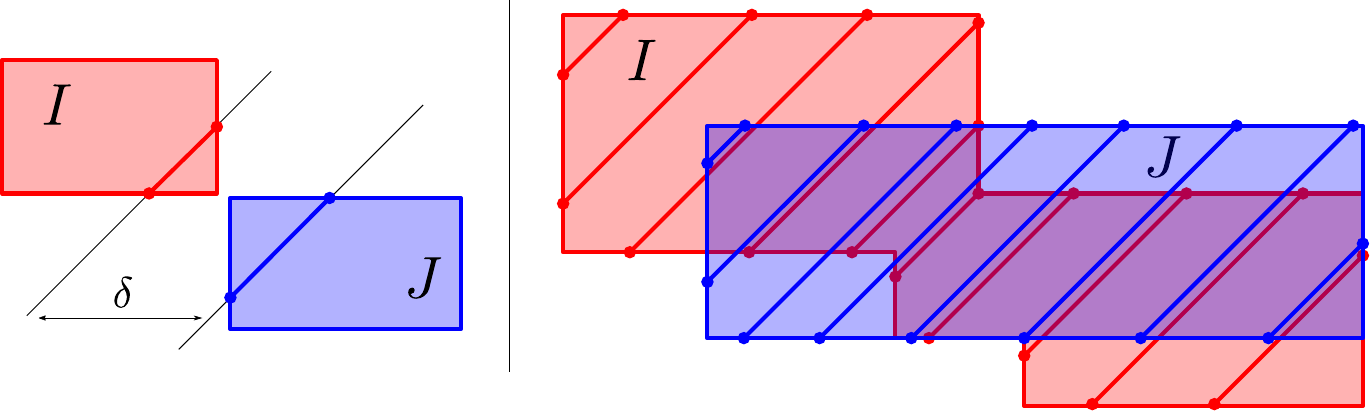}
	\caption{
		\textbf{(Left)} Example of module whose interval summands do not satisfy Assumption (2).
		\textbf{(Right)} Example of module whose interval summands do satisfy Assumptions (1) and (2).
		Bars corresponding to consecutive lines can only be matched if they are compatible, which, in this figure, means that they have the same color, i.e., that they
		are associated to the same interval summand.
	}\label{fig:compatibility}
\end{figure}

\begin{proof}
	Let $\induced I$ and $\induced{I'}$ be two interval summands in the decomposition of $\M$.
	Let $l_1$ and $l_2$ be two $\delta$-consecutive lines of $L$, and let $b:=\barcode{\restr{\induced{I}}{l_1}}$ be the bar corresponding to  $I$  along $l_1$.
	We will show that $\matching_{\rm comp}$ must match $b$ to either $b':=\barcode{\restr{\induced{I}}{l_2}}$ if ${I}\cap l_2\neq\varnothing$, or the empty set if ${I}\cap l_2 = \varnothing$.

	\begin{itemize}
		\item[$\bullet$] If $ \supp{I}\cap l_2 = \varnothing$, then
		      by Lemma \ref{lemma:compatible_close},
		      the length of $b$ is at most $\delta$, i.e., $\norm{b^I_{l_1}-d^I_{l_1}}_\infty \le \delta$.

		      It is thus compatible with the empty set.
		      Now,
		      since $d_\infty(l_1,l_2)=\delta/2$ and since $l_1\in S^I$,
		      Assumption (2) ensures that the bar $b'':= \barcode{\restr{\induced{I'}}{l_2}}$ (if it exists) must be of length at least $2\delta$. %
		      In particular, it is not compatible with $b$, hence $\matching_{\rm comp}$ cannot match $b$ to $b''$, and must match $b$ to the empty set.

		\item[$\bullet$] If $ \supp{I} \cap l_2 \neq \varnothing$, then
		      the bar $b'=[b^I_{l_2},d^I_{l_2})$ in $\barcode{\restr{\induced{I}}{l_2}}$

		      is compatible with $b$, as per Lemma~\ref{rem:flat_rect}.
		      According to Lemma~\ref{lemma:consecutive_segment}, it follows that the birthpoint and deathpoint of any
		      bar along $l_2$ that is compatible to $l_1$ must belong to segments $s_b, s_d$ of length $\delta$ that contain
		      $b^I_{l_2}$ and $d^I_{l_2}$ respectively.
		      Let $b'':=[b^{I'}_{l_2}, d^{I'}_{l_2})$ be the bar in $\barcode{\restr{\induced{I'}}{l_2}}$ (if it exists).

		      According to Assumption (1), we either have $ \norm{ b_{l_2}^I - b_{l_2}^{I'}}_{\infty}>\delta$ or $ \norm{ d_{l_2}^{I'} - d_{l_2}^{I'}}_{\infty} > \delta$.
		      In particular this means that either $b^{I'}_{l_2}\not\in s_b$ or $d^{I'}_{l_2}\not\in s_d$.
		      Hence $b''$ is not compatible with $b$, and $\matching_{\rm comp}$ must match $b$ to $b'$.

	\end{itemize}
	In both cases, $\matching_{\rm comp}$ is well-defined and \rebuttal{induced from $\Mbb$}.
\end{proof}

\rebuttal{One can check that the proof of \cref{prop:general_exact_matching} extends easily to the matching functions associated to the Wasserstein distances and the vineyards algorithm. Indeed, their associated matching functions are unique when $\delta$ becomes small enough, and thus must correspond to the only compatible matching $\matching_{\rm comp}$ identified in~\cref{prop:general_exact_matching}.}

\subsection{The vineyards algorithm for general $2$-parameter modules}\label{subsec:vineyards}
In this section, we show that the matching function associated to the vineyards
algorithm for simplicial complexes is compatible, for small enough $\delta$ and $n=2$. %
This section is quite technical and can be skipped by readers who are most interested in the general exposition.
Since vineyards are heavily based on simplicial homology,
we first recall the basics of persistent homology from simplicial complexes in Section~\ref{subsubsec:simplicial_homology}.
Then, we provide an analysis of the
vineyards algorithm in Section~\ref{subsubsec:vineyard}.

\subsubsection{Persistent homology of simplicial complexes}\label{subsubsec:simplicial_homology}

We assume in the following that the reader is familiar with simplicial complexes, boundary operators and homology groups, and we refer
the interested reader to~\cite[Chapter 1]{munkresElementsAlgebraicTopology1984} for a thorough treatment of these notions.
The first important definition is the one of {\em filtered simplicial chain complexes}. \\

\begin{definition}
	Let $S$ be a simplicial complex, and $f\colon S \to \R$ be a {\em filtration function}, i.e., $f$ satisfies
	$f(\sigma) \le f( \tau)$ when $\sigma \subseteq \tau$.
	Then, the {\em filtered simplicial chain complex} $(S,f)$ is defined as
	$(S,f) = ((C_t)_{t\in \R}, \iota)$, where:
	\begin{enumerate}
		\item $C_t = \left< \sigma_0,\dots, \sigma_i\right>$ is the vector space over a field $k$ whose basis elements are the simplices that have filtration values smaller than $t$, i.e.,
			$ \left\{ \sigma_0,\dots, \sigma_i\right\} = \left\{ \sigma\in S: f(\sigma) \le t\right\}$, and
		\item for any $s\le t$, the map $\iota = \iota_s^t \colon C_s \hookrightarrow C_t$ is the canonical injection. \\
	\end{enumerate}
\end{definition}

Note that $f$ can be used to define an order on the simplices of $S = \left\{ \sigma_i\right\}_{i=0}^N$, by using the ordering induced by the filtration values.
In other words, we assume in the following that $f(\sigma_0) \le f(\sigma_1) \le \dots \le f(\sigma_N)$.
We also slightly abuse notations and define $C_i := \left< \sigma_0, \dots,
\sigma_i \right>$ for any $i\in\llbracket 0,N \rrbracket$, and
\begin{equation}
	(S,f) = \left(C_0 \overset {\iota_0} \hookrightarrow C_1 \overset {\iota_1}\hookrightarrow \dots \overset {\iota_{N-1}}\hookrightarrow C_N = \left<S\right>\right).
\end{equation}

Then, applying the homology functor $H_{*}$ on this filtered simplicial chain
complex yields the following (single-parameter) persistence module:
\begin{equation*}
	H_*(S,f) = 0
	\rightarrow
	H_*(C_0)
	\rightarrow
	H_*(C_1)
	\rightarrow
	\dots
	\rightarrow
	H_* (C_N).
\end{equation*}

An important theorem of single-parameter persistent homology states that, up to a change of basis, it is possible to pair some chains together in order
to define the so-called {\em one-dimensional persistence barcode} associated to the filtered simplicial chain complex.  \\

\begin{theorem}[Persistence pairing, {\cite[Theorem 2.6]{desilvaDualitiesPersistentCo2011}}]
	\label{th:pers_pairing}
	Given a filtered simplicial chain complex $(S,f) %
	= C_1\hookrightarrow C_2\hookrightarrow\dots\hookrightarrow C_N$ and associated persistence module $H_*(S,f)$,
	there exists a partition $\llbracket 1,N\rrbracket = E\sqcup B\sqcup D$,
	a bijective map $\pairing:D\rightarrow B$, and a new basis $\hat\sigma_1,\dots,\hat\sigma_N$ of $C$, called {\em reduced basis}, such that:
	\begin{enumerate}
		\item $C_i=\langle \hat\sigma_1,\dots,\hat\sigma_i\rangle$,
		\item $\partial\hat\sigma_e=0$ for any $e\in E$,
		\item for any $d\in D$, one has $\partial\hat\sigma_{\pairing(d)}=0$, and
			$\partial\hat\sigma_{d}$ is equal to $\hat\sigma_{\pairing(d)}$
			{\em up to simplification}, i.e.,
			there exists a set of indices $\reduced(d)$ such that
			$(i)$ $j<\pairing(d)\leq d$ for any $j\in \reduced(d)$, and
			$(ii)$ $\partial\hat\sigma_{d}=\hat\sigma_{\pairing(d)} + \sum_{j \in \reduced(d)}\hat\sigma_j$.
	\end{enumerate}
	In particular, the chains $\{\hat\sigma_j:j\in E \cap \llbracket 1,i\rrbracket\} \cup \{\hat\sigma_j:j\in B \cap \llbracket 1,i\rrbracket \text{ and } \exists d > i\text{ s.t. }\pairing(d)=j\}$
	form a basis of the simplicial homology groups $H_*(C_i)$.
	Moreover, the chains $\{\hat\sigma_j:j\in B\sqcup E\}$ are called {\em positive chains} while the chains $\{\hat\sigma_j:j\in D\}$ are called {\em negative chains}.

	The multiset of bars $\mathcal
	B(f):=\{[f(\hat\sigma_b),f(\hat\sigma_d)]:b=\pairing(d)\}\cup \{[f(\hat\sigma_e),+\infty):e\in E\}$ is called the {\em persistence barcode} of the filtered simplicial chain complex $(S,f)$
	and of the single-parameter persistence module $H_*(S,f)$. \\
\end{theorem}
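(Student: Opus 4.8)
The plan is to prove this by the classical matrix reduction underlying the persistence algorithm, reading off $\pairing$, $\reduced$ and the reduced basis from the reduction matrix. First I would fix a total order $\sigma_1,\dots,\sigma_N$ of the simplices refining $f$, e.g.\ ordering lexicographically by $(f(\sigma),\dim\sigma)$ and breaking remaining ties arbitrarily; since every proper face $\tau\subsetneq\sigma$ satisfies $f(\tau)\le f(\sigma)$ and $\dim\tau<\dim\sigma$, the face $\tau$ precedes $\sigma$, so the boundary matrix $\partial$ written in this basis satisfies $\partial\sigma_j\in C_{j-1}$ with $C_i=\langle\sigma_1,\dots,\sigma_i\rangle$. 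Then I would run the left-to-right column reduction: for $j=1,\dots,N$, as long as some earlier column $j'<j$ is nonzero with the same lowest nonzero row index $\mathrm{low}(j')=\mathrm{low}(j)$, add a scalar multiple of column $j'$ to column $j$ cancelling that entry. This terminates (each step strictly lowers $\mathrm{low}(j)$ or zeroes the column) and only ever adds a lower-indexed column to a higher-indexed one, hence is encoded by $R=\partial V$ with $V$ unitriangular and $R$ \emph{reduced} ($\mathrm{low}$ injective on nonzero columns). Setting $\hat\sigma_j:=\sum_i V_{ij}\sigma_i$ then gives a basis with $C_i=\langle\hat\sigma_1,\dots,\hat\sigma_i\rangle$ and $\partial\hat\sigma_j=$ the $j$-th column of $R$.

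Next I would set $D:=\{j:\text{column }j\text{ of }R\neq 0\}$, $\pairing:=\mathrm{low}_R|_D$, $B:=\pairing(D)$, and $E:=\llbracket 1,N\rrbracket\setminus(B\cup D)$, and then establish two facts. First, for $d\in D$ the $d$-th column of $R$ lies in $C_{\pairing(d)}$ with coefficient $1$ on $\sigma_{\pairing(d)}$; since the $\hat\sigma$'s also form a nested basis, re-expanding in the $\hat\sigma$-basis keeps $\hat\sigma_{\pairing(d)}$ as leading term, so $\partial\hat\sigma_d=\hat\sigma_{\pairing(d)}+\sum_{j\in\reduced(d)}\hat\sigma_j$ with $\reduced(d)\subseteq\llbracket 1,\pairing(d)-1\rrbracket$ and $\pairing(d)=\mathrm{low}_R(d)<d$. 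Second, the pivot lemma $B\cap D=\varnothing$: if $b=\pairing(d)\in D$, applying $\partial$ to the previous expansion and using $\partial^2=0$ gives an equality of columns of $R$, namely $\partial\hat\sigma_b=-\sum_{j\in\reduced(d)}c_j\,\partial\hat\sigma_j$; the left side has lowest index $\mathrm{low}_R(b)$, while the right side is a nonzero combination of columns with pairwise distinct lows all strictly below $b$, hence has lowest index $\mathrm{low}_R(j_0)$ for some $j_0<b$, contradicting injectivity of $\mathrm{low}_R$ on $R$. Once $B\cap D=\varnothing$, the union $B\sqcup D$ is genuine, $E\cup B$ indexes exactly the zero columns of $R$, so $\partial\hat\sigma_e=0$ for $e\in E$ and $\partial\hat\sigma_{\pairing(d)}=0$ for $d\in D$, and $\pairing\colon D\to B$ is a bijection.

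For the "in particular" statement I would argue by dimension count: $\partial(C_i)=\langle\partial\hat\sigma_j:j\le i\rangle$ has basis $\{\partial\hat\sigma_d:d\in D\cap\llbracket1,i\rrbracket\}$ (linearly independent, distinct lows), and by rank--nullity $Z_i:=\ker(\partial|_{C_i})$ has basis $\{\hat\sigma_j:j\le i,\ j\notin D\}=\{\hat\sigma_j:j\le i,\ j\in B\cup E\}$ (using the pivot lemma). Moreover $\partial(\partial\hat\sigma_d)=0$ forces the $D$-indexed coefficients of $\partial\hat\sigma_d$ to vanish, so each $\partial\hat\sigma_d$ ($d\le i$) is in echelon form in this basis with pivot $\hat\sigma_{\pairing(d)}$ and remaining terms indexed below $\pairing(d)$; hence $H_*(C_i)=Z_i/\partial(C_i)$ has as a basis the classes of $\{\hat\sigma_j:j\in E\cap\llbracket1,i\rrbracket\}\cup\{\hat\sigma_b:b\in B\cap\llbracket1,i\rrbracket,\ \pairing^{-1}(b)>i\}$, which is exactly the stated basis. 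Tracking when each class appears and dies as $i$ grows reads off the bars: $\hat\sigma_b$ is born at index $b$ and killed at index $\pairing^{-1}(b)$, and $\hat\sigma_e$ is born at $e$ and never killed, giving the claimed multiset $\mathcal B$; independence of $\mathcal B$ from the reduction choices follows from the rank-invariance of $\mathrm{low}_R$ (the lower-left submatrix ranks of $R$ equal those of $\partial$, since column operations among the first $j$ columns act by an invertible unitriangular block), or more conceptually from the structure theorem for pointwise finite-dimensional persistence modules over a totally ordered index set.

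The hard part is the pivot lemma $B\cap D=\varnothing$: it is the one place where $\partial^2=0$ is used essentially together with the reducedness of $R$, and it is what makes $E\sqcup B\sqcup D$ a legitimate partition with $\pairing$ a bijection. Everything after it is linear-algebra bookkeeping about nested bases and echelon forms.
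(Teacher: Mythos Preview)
Your argument is correct and is precisely the standard matrix-reduction proof of the persistence pairing. Note, however, that the paper does not actually give its own proof of this theorem: it is quoted from \cite[Theorem~2.6]{desilvaDualitiesPersistentCo2011}, and the independence of $\pairing$ from the choice of reduced basis is deferred to the Pairing Lemma in \cite[VII.1]{edelsbrunnerComputationalTopologyIntroduction2010}. What you have written is essentially the argument that those references contain: reduce $\partial$ to $R=\partial V$ with $V$ upper unitriangular, read off $D$, $B=\mathrm{low}_R(D)$, $E$, use $\partial^2=0$ together with injectivity of $\mathrm{low}_R$ on nonzero columns to get $B\cap D=\varnothing$, and then count dimensions to extract the homology bases and the barcode. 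Your identification of the pivot lemma as the one genuinely nontrivial step is accurate. One small remark: the displayed identity $\partial\hat\sigma_d=\hat\sigma_{\pairing(d)}+\sum_{j\in\reduced(d)}\hat\sigma_j$ in the statement implicitly has all coefficients equal to~$1$, which is literally true over $\mathbb F_2$; over a general field $\field$ your argument yields the same expansion up to nonzero scalars, which is all that is needed (and one can always rescale the $\hat\sigma_d$ for $d\in D$ to make the leading coefficient~$1$).
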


Note that while
the reduced basis $\{\hat\sigma_1,\dots,\hat\sigma_N\}$ does not need to be unique,
the pairing map $\pairing$ is actually independent of that reduced basis, see \cite[VII.1, Pairing Lemma]{edelsbrunnerComputationalTopologyIntroduction2010}.

\subsubsection{Vineyards algorithm and matching}\label{subsubsec:vineyard}

The vineyards algorithm \cite{cohen-steinerVinesVineyardsUpdating2006} is a
method that allows to find reduced chain bases for filtered simplicial complexes whose simplex orderings only differ by a single transposition of consecutive simplices,
that we denote by $(i\, i+1)$.
This algorithm was later generalized to the setup of zigzag persistence modules in \cite{mariaZigzagPersistenceReflections2015}. This is the setup that we use
in our context as follows.
We start with some $n$-parameter persistence module $\Mbb$, such that there
exists a finite dimensional, generic, $n$-filtered one-critical simplicial chain complex $(\simpcomp,F)$, satisfying $\Mbb=H_*(F)$.\footnote{We drop the dependence on $\simpcomp$ for simplicity.}
We also fix an order $(\sigma_1, \dots, \sigma_N)$ on the simplices of $\simpcomp$, and use the following notation (in this section only):
\begin{equation}\label{eq:vineyards_x_def}
	(x_1, \dots, x_N):= \left( F(\sigma_1), \dots, F(\sigma_N) \right), \quad \textnormal{and }\quad   (x_1^l,\dots,x_N^l) :=\left( \restr{F}{l}(\sigma_1), \dots, \restr{F}{l}(\sigma_N) \right),
\end{equation}
the filtration values (in $\R^n$) of the simplices of $F$ and $\restr F l$ respectively, for any given positive line $l$.
Note that we always have, for any line $l$:
\begin{equation}\label{eq:vineyards_line_proj}
	\forall 1\le j\le N,\quad x_j \leqn x_j^l\quad \textnormal{and} \quad\exists 1\le k \le n, \quad (x_j)_k = (x_j^l)_k.
\end{equation}
Finally, we consider the map $\ordonly \colon \restr F l \mapsto \ord{\restr F l }\in\mathfrak S_N$
that gives the partial order of the simplices of $F$ according to the
filtration $\restr F l$, completed to a total order by the initial order if necessary, i.e.,
$\ord{\restr F l}$ is the only permutation $\gamma\in \mathfrak S_N$  satisfying
\begin{equation}\label{eq:vineyards_ord_def}
	\forall 1\le i<j\le N,
	\textnormal{ we have }
	\begin{cases}
		F \left(\sigma_{\gamma_i}\right)
		\leqn
		F  \left(\sigma_{\gamma_j}\right)
		& \textnormal{ if they are comparable, and}                     \\
		\restr F l \left(\sigma_{\gamma_i}\right)
		\leqn
		\restr F l \left(\sigma_{\gamma_j}\right)
		& \textnormal{  otherwise,}
		\textnormal{ with }
		\\
		\gamma_i
		< \gamma_j
		& \textnormal{ if } \restr{F}{l} \left(\sigma_{\gamma_i}\right)
		=
		\restr F l \left(\sigma_{\gamma_j}\right).
	\end{cases}
\end{equation}
\def\vine{\ensuremath{\mathrm{vine}_i}}
\begin{proposition}\label{prop:vine_update_comp}
	Consider $l_1,l_2$ two consecutive diagonal lines such that
	for any diagonal line $l_1 \le l \le l_2 $, we have either
	$\gamma = \gamma_1$ or $\gamma = \gamma_2$, where
	$\gamma :=\ord{\restr F l}, \, \gamma_1 := \ord{\restr F {l_1}} $ and $\gamma_2:=\ord{\restr F {l_2}}$.
	Then, either $\gamma_1 = \gamma_2$ or there exists
	an integer $1\le i \le n$ such that $\gamma_1 = \gamma_2 \circ (i \, i+1)$,
	and the $i$th vineyard update
	induces a compatible matching between $\barcode{\restr{\Mbb}{l_1}}$ and
	$\barcode{\restr{\Mbb}{l_2}}$. \\
\end{proposition}

\begin{remark}[More details about the vineyard matching]
	More specifically,
	the matching function given by the vineyards algorithm corresponds to the following.
	If $(\tau_1, \dots, \tau_N)$ is a reduced basis on the first line,
	then this algorithm provides an updated basis
	$(\vine(\tau_1),\dots, \vine(\tau_N))$ that is reduced on the second line.
	The matching function is then given by the following relation:
	if $b$ (resp. $d$) is the birthpoint (resp. deathpoint) of a bar in the barcode induced by $\restr \Mbb {l_1}$, and generated by some $\tau_j$ (resp. $\tau_k$),
	then the endpoint $b$ is matched to the endpoint generated by $\vine(\tau_j)$ (resp. $\vine(\tau_k)$).

	Note that a non-trivial bar $\left[\restr F {l_1} (\tau_j), \restr F {l_1} (\tau_k) \right)$
	may be matched to an empty bar if $\restr F {l_2} (\vine((\tau_j)) =  \restr
		F {l_2} (\vine(\tau_k))$.
		In particular, such continuous matching functions lead to indicator summands, which,
		once split up, lead to the desired \mmaout{} decomposition with interval summands. \\
	\end{remark}

	\begin{proof}
		First note that if $\gamma_1 = \gamma_2$, there is nothing to show, hence we
		assume $ \gamma_1 \neq \gamma_2$ in the following.
		Without loss of generality, we will first assume that the initial order
		$(\sigma_1,\dots,\sigma_N)$ on the first line is compatible with the partial
		order defined on the first line, i.e., $\ord{\restr{F}{l_1}} = \mathrm{id}$.

		First, note that on the set of diagonal lines in $\R^n$, the map $l\mapsto x^l$ is
		continuous, and the same goes for the map $l\mapsto (x_{\pi_1}^l,
		\dots, x _{\pi_N}^l)$, where $\pi = \ord{\restr F {l_2}}$.
		In particular, if $\gamma_1\neq \gamma_2$ then there exists an integer $i$ such that
		$x _{\pi_i}^l = x _{\pi _{i+1 }}^l$ for some line $l_1\le l\le l_2$.
		By the genericity assumption, this $i$ is unique which concludes that
		$\gamma_1 = \gamma_2 \circ (i \, i+1)$. \\

		Now, as  $\gamma_1 \neq \gamma_2$, we have $x_i^{l_1}\le x_{i+1}^{l_1}$ and $x_i^{l_2}> x_{i+1}^{l_2}$.
		Note that in particular that this implies that  both
		$x_i$, $x_{i+1}$ and %
		$x_i^{l_1}$, $x_{i+1}^{l_2}$ are strictly incomparable:
		\begin{itemize}
			\item as $x_i^{l_2}> x_{i+1}^{l_2}$, there exists a dimension $j$ (obtained with Equation~(\ref{eq:vineyards_line_proj})) such that ${
				\left( x_i \right)_j = \left( x_i^{l_2} \right)_j> \left( x_{i+1}^{l_2} \right)_j \ge \left( x_{i+1} \right)_j}$, and
			\item If $x_i$ and $x_{i+1}$ were comparable, we would have (by previous point)
				$x_{i+1} \leqn x_i$ with $x_i \neq x_{i+1}$, which would contradict
				the initial assumption, i.e., the fact that the initial order $ \left( \sigma_1,\dots,\sigma_N \right)$ is given by a completion
				of the original poset.
				Hence, there exists another dimension $j'\neq j$ (obtained again with Equation~(\ref{eq:vineyards_line_proj})) such that
				$ \left( x_i \right)_{j'} \le \left( x_i^{l_1} \right)_{j'}< \left( x_{i+1}^{l_1} \right)_{j'} = \left( x_{i+1} \right)_{j'}$.
		\end{itemize}
		Combining everything,
		we have:
		\begin{equation}\label{eq:vineyards_uncomp_idx_i}
			\exists 1 \le j \neq j' \le n, \quad  \left( x_i^{l_1} \right)_{j'}< \left( x_{i+1} \right)_{j'} \leq \left( x_{i+1}^{l_2} \right)_{j'}
			\quad \textnormal{ and } \quad
			\left( x_{i+1}^{l_2} \right)_j < \left(x_i\right)_j \leq \left( x_i^{l_1} \right)_j,
		\end{equation}
		which guarantees that %
		$x_i^{l_1}$ and $x_{i+1}^{l_2}$
		are strictly incomparable.

		Consider for any index $1\le j \le N$ the complex $K_j := \left< \sigma_1, \dots \sigma_j \right>$, and the following diamond
		\begin{equation}
			\begin{tikzcd}
				&                                           & H_*\left(K_{i-1}\cup\left\{\sigma_{i+1}\right\}\right) \arrow[rd, "d"]      &                       &        \\
				\cdots \arrow[r] & H_*(K_{i-1}) \arrow[r, "a"] \arrow[ru, "b"] & H_*\left(K_{i-1}\cup\left\{\sigma_i\right\}\right) \arrow[r] \arrow[r, "c"] &  H_*(K_{i+1}) \arrow[r] & \cdots,
				\label{eq:vineyard_diamond}
			\end{tikzcd}
		\end{equation}
		where the maps $a,b,c,d$ are induced by the inclusion.
		Note that as simplices are only added one by one, the maps $a,b,c,d$ are
		either surjerctive of nullity one or injective of corank one, depending on the positivity or negativity of the simplices $\sigma_i$ and $\sigma_{i+1}$.
		Furthermore, the Mayer-Vietoris theorem ensures that the following sequence is exact:
		\begin{equation*}
			H_*(K_{i-1})
		\xrightarrow{x\mapsto (x,x)}{H_*(K_{i-1}\cup \{\sigma_{i+1}\})\oplus K_{i-1}\cup \{\sigma_{i}\})}
		\xrightarrow{(x,y)\mapsto y-x}{H_*(K_{i+1})}.
	\end{equation*}

	Note that in the continous case, \cref{eq:vineyard_diamond}
	corresponds to the following diagram:
	\begin{equation}
		\begin{tikzcd}[column sep=3.5em, row sep=3em]
			H_*\left(F_{x_{i+1}^{l_2}}\right) \arrow[r, "d"] &  H_*\left(F_{x_{i}^{l_2}}\right) & \\
			H_*\left(F_{x_{i-1}^{l_2}}\right)\arrow[u, "b"]&  H_*\left(F_{\bigvee_{j\le i+1} x_j}\right) \arrow[r, "\sim"] \arrow[u, "\sim"] & H_*\left(F_{x_{i+1}^{l_1}}\right)  \\
			H_*\left(F_{\bigvee_{j\le i} x_j}\right) \arrow[u, "\sim"] \arrow[r,"\sim"]  & H_*\left(F_{x_{i-1}^{l_1}}\right)\arrow[r, "a"]& H_*\left(F_{x_i^{l_1}}\right),\arrow[u, "c"]
		\end{tikzcd}
	\end{equation}
	where $\bigvee$ denotes the coordinate-wise maximum, and the isometries are guaranteed by \cref{eq:vineyards_line_proj,eq:vineyards_ord_def}.
	Such diamonds are called {\em transposition diamonds}, on which \cite[Theorem
	2.4]{mariaZigzagPersistenceReflections2015} applies and states that
	if $\left\{ \tau_1, \dots, \tau_N \right\}$ is a reduced chain basis of
	the persistence module generated by applying the homology functor $H_*$ on the filtration $ \left( K_j \right)_{1\le j \le N}$,
	then, there is an explicit updated basis
	$\left\{ \vine(\sigma_1),\dots,\vine(\sigma_N) \right\}$,
	that is reduced for the filtered chain complex $( \hat K_j)_{1\le j
	\le N}$, where, given an index $j$, the complex $\hat K_j$ is defined as
	$\hat K_j := \left< \sigma_{(i\, i+1)1}, \dots, \sigma_{(i\, i+1) j}
	\right>$.
	This vine update follows a case study, that we follow below to show that the corresponding  matching function is compatible.
	\begin{enumerate}
			\def\v{{\ensuremath{\color{red} v}}}
			\def\u{{\ensuremath{\color{blue} u}}}
		\item The maps $a$ and $c$ are surjective of nullity 1, i.e., the added simplices $\sigma_i$ and $\sigma_{i+1}$ are negative.
			Let $\u,\v\in \left\{ \tau_1, \dots, \tau_N \right\}$ the chains
			generating these intervals, i.e., $\u =\partial \tau_i$ and $\v=\partial
			\tau_{i+1}$.
			\begin{enumerate}
				\item {}{} \label{vine:case1i}
					Assume that $\v\in \ker(b)$.
					See \cref{fig:vine1.i} for an illustration.
					In that case,
					\cite[Theorem 2.4]{mariaZigzagPersistenceReflections2015} guarantees that
					\begin{equation}
						\forall 1\le j \le N, \quad \tau_j \mapsto \vine \left( \tau_j \right) := \tau_j
					\end{equation}
					is a reduced basis of the filtered chain complex
					$(\hat K_j)_{1\le j\le N}$
					and hence of the filtered chain complex
					$\restr F {l_2}$.
					Hence, \cref{eq:vineyards_line_proj} guarantees that for any
					index $j$, the matched bar endpoints, $\tau_j \mapsto \vine(\tau_j)$, with filtration values $x_{j}^{l_1}$ and
					$x_j^{l_2}$ are not strictly comparable, and thus the induced matching function is compatible.
				\item{}{} \label{vine:case1ii} If $\v\notin \ker(b)$, then
					\cite[Theorem 2.4]{mariaZigzagPersistenceReflections2015}
					guarantees that there exists $\alpha\in k\setminus \left\{ 0
					\right\} $ such that
					$\u+\alpha \v \in \ker(b)$.
					See \cref{fig:vine1.ii} for an illustration.
					In other words, in the quotient space ${H_*(K_{i-1})}$, we
					have  $\u = u_1 \partial \sigma_i$
					and $\v = v_1\partial \sigma_{i} + v_2 \partial \sigma_{i+1}$ for some invertible
					constants $u_1, v_1, v_2 \in k\setminus \left\{ 0 \right\} $, with $ \alpha u_1 = -
					v_1$.
					Hence, we have:
					\begin{equation}\label{vine:case1ii_eq}
						\left\{  \vine(\u), \vine(\v) \right\} =
						\begin{cases}
							\left\{ \u, \u + \alpha \v \right\} \quad \textnormal{if } \restr
							F {l_2}(\u) \le \restr F {l_2}(\v), \\
							\left\{ \v, \u +\alpha\v\right\}\quad \textnormal{otherwise,}
						\end{cases}
					\end{equation}
					and the identity for the non-impacted chains:
					\begin{equation}
						\forall \tau_j \notin{ \left\{ \u,\v, \tau_i,
						\tau_{i+1}\right\} },
						\quad
						\vine \left( \tau_j \right) : =
						\tau_j.
					\end{equation}
					The chains $\tau_i$ and $\tau_{i+1}$ are then updated such that
					the vine update commutes with the boundary, i.e., such
					that:
					\begin{equation}\label{eq:vine_coboundary_update}
						\vine (\partial \tau_i) = \vine(\u) = \partial
						\vine(\tau_i)
						\quad \textnormal{and} \quad
						\vine (\partial \tau_{i+1}) = \vine(\v) = \partial
						\vine(\tau_{i+1}).
					\end{equation}
					In particular, \cref{eq:vineyards_line_proj} guarantees once
					again that (at least) all but four  endpoint (two bars, with two endpoints each) matching are compatible.
					Furthermore, note that since the simplices' order only differ from the
					permutation $(i\, i+1)$ and the chains $\u,\v$ belong to $ K_{i-1}$, there exist two indices $j,k < i $ such that the inequality $x_j^{l_1} = \restr F {l_1}(\u)
					\le \restr F {l_1}(\v) = x_k^{l_1}$ is equivalent to $x_j^{l_2}=\restr F {l_2}(\u)
					\le \restr F {l_2}(\v) = x_k^{l_2}$.
					We fix such indices $j$ and $k$.
					\\
					We follow the two different cases.
					\begin{enumerate}
						\item {}\label{vine:negok}
							In the first case, we have  $\vine(\u) = \u$ (with
							$\vine(\tau_i) = \tau_i$)
							and
							$\vine(\v) =\u+\alpha\v$
							(with $\vine(\tau _{i+1}) = \tau _{i} + \alpha \tau _{i+1}$).
							Furthermore,  since $\restr F {l_2}(\u)
							\le \restr F {l_2}(\v)$, we also have $\restr F
							{l_2} \left( \u+ \alpha\v \right)
							= \restr F {l_2} (\sigma_k)
							=\restr F
							{l_2} \left(\v \right) =  x_{k}^{l_2}$.
							Hence, \cref{eq:vineyards_line_proj} guarantees once again that the birthpoint matching is compatible.
							The same goes for the deathpoints matching, since for $j\in \left\{ i,i+1 \right\}$, we match the endpoints
							$x_{j}^{l_1} = \restr F {l_1}(\tau_j)$ and
							$x_{j}^{l_2} = \restr F {l_2}(\vine(\tau_j))$,
							which are not strictly comparable.
							Thus, the matching function is compatible.

						\item \label{vine:negnok} In the second case, we have
							$\vine(\u) = \u+\alpha\v$ (with $\vine(\tau_i) =
							\tau_i + \alpha \tau_{i+1}$)
							and $\vine(\v) = \v$ (with $\vine(\tau_{i+1}) =
							\tau_{i+1}$).
							In this case, we have $\restr F {l_2} (\u) > \restr F {l_2}(\v)$, and we match the birthpoints
							\begin{equation*}
								x_j^{l_1} = \restr F {l_1} (\u) \textnormal{ with }
								\restr F {l_2}(\vine(\u)) =
								\restr F {l_2}(\u + \alpha\v )  = \restr F {l_2} (\u) = x_j^{l_2},
								\textnormal{ and,}
							\end{equation*}
							\begin{equation*}
								x_k^{l_1} = \restr F {l_1} (\v) \textnormal{ with }
								x_{k}^{l_2} = \restr F {l_2}(\vine(\v)) =
								\restr F {l_2}(\v),
							\end{equation*}
							and the deathpoints
							\begin{equation*}
								x_i^{l_1} = \restr F {l_1} (\tau_i) \textnormal{ with }
								\restr F {l_2}(\vine(\tau_i)) =
								\restr F {l_2}(\tau_i + \alpha\tau_{i+1}) =
								\restr F {l_2} (\sigma_{i+1}) =
								x_{i+1}^{l_2},
								\textnormal{ and }
							\end{equation*}
							\begin{equation*}
								x_{i+1}^{l_1} = \restr F {l_1} (\tau_{i+1}) \textnormal{ with }
								x_{i}^{l_2} = \restr F {l_2}(\vine(\tau_{i+1})) =
								\restr F {l_2}(\tau_{i+1}) = \restr F {l_2}(\sigma_i).
							\end{equation*}
							Finally, the birthpoints are not strictly comparable hence compatible thanks to
							\cref{eq:vineyards_line_proj}
							and the deathpoints thanks to
							\cref{eq:vineyards_uncomp_idx_i}.
							The matched bars are thus compatible.
					\end{enumerate}
			\end{enumerate}
			\begin{figure}[H]
				\centering
				\begin{subfigure}[b]{.45\textwidth}
					\includesvg[width=\textwidth]{images/vineyards_diams/vine_1.i.svg}
					\caption{Illustration of case \ref{vine:case1i}.}
					\label{fig:vine1.i}
				\end{subfigure}
				\hfill
				\begin{subfigure}[b]{.45\textwidth}
					\includesvg[width=\textwidth]{images/vineyards_diams/vine_1.ii.svg}
					\caption{Illustration of case \ref{vine:case1ii}.}\label{fig:vine1.ii}
				\end{subfigure}
				\caption{Vineyard case: $\sigma_i$ and $\sigma _{i+1 }$ negative.}
			\end{figure}
		\item The map $a$ and $c$ are injective of corank $1$, i.e., the
			simplices $\sigma_i$ and $\sigma_{i+1}$ are positive.
			See \cref{fig:vine2.ii} for an illustration.
			\begin{enumerate}
				\item \label{vine:case2i} Assume that $\u\in \mathrm{im}(d)$.
					Then, {\cite[Theorem~2.4]{mariaZigzagPersistenceReflections2015}}
					guarantees that the updated basis given by
					$\tau_j\mapsto \vine \left( \tau_j \right) := \tau_j$
					is a reduced basis of $\restr F {l_2}$ as well.
					Using a similar argumentation as
					Case \ref{vine:case1i}, the matching function is compatible.
				\item \label{vine:case2ii} If there exists an $\alpha \in k
					\setminus \left\{ 0 \right\} $ such that $\u + \alpha \v\in
					\mathrm{im}(d)$, i.e., $\u + \alpha\v \in H_*(\hat K_i)$, i.e., in
					$H_*(K_{i-1})$, we have $\u = u_1 \sigma_i$ and $\v = v_1 \sigma_i + v_2
					\sigma_{i+1}$ , for invertible constants $u_1,v_1, v_2 \in \field \setminus \left\{ 0  \right\}$ satisfying $\alpha u_1 = -v_1$.
					We also have two cases
					\begin{equation*}
						\left\{ \vine(\u), \vine(\v) \right\} =
						\begin{cases}
							\left\{  \u+\alpha\v, \u \right\} \quad \textnormal{if }\restr F {l_2}(\delta\u) \le \restr F {l_2} (\delta\v), \\
							\left\{  \u+\alpha\v,\v \right\} \quad \textnormal{otherwise,}
						\end{cases}
					\end{equation*}
					where $\restr F {l_2} (\delta \u)$ (resp. $\restr F {l_2} (\delta \v)$)
					is the first time in which a coboundary of $\u$ (resp. $\v$) appears where, given a chain cycle $\tau$, we define
					$\restr F {l_2} (\delta \tau):= \inf \left\{ t\in l_2 : \tau
					= 0 \textnormal{ in } H_*(F_t)\right\}\in \mathbb R \cup
					\left\{ +\infty \right\} $.
					When they exist, we consider the first coboundary $\tau_j$
					(resp. $\tau_k$) in
					$\restr F {l_1}$ of $\u$ (resp. $\v$) i.e., when the cycle $\u$
					(resp. $\v$) is not essential, we consider the index $j>i+1$
					(resp. $k>i+1$) such that
					$\partial \tau_j = \u$ (resp. $\partial \tau_k = \v$),
					and hence $\restr F {l_1}(\delta\u) = \restr F {l_1} (\tau_j)$
					\begin{equation}\label{eq:vine:coboundary_stuff}
						\restr F {l_1}(\delta\u) = \restr F {l_1} (\tau_j)
						\quad
						\textnormal{ and respectively }
						\quad
						\restr F {l_1}(\delta\v) = \restr F {l_1} (\tau_k)
					\end{equation}
					Note that since the simplices' order on $l_1$ and $l_2$
					only differ from the permutation $(i\, i+1)$,
					\cref{eq:vine:coboundary_stuff} is also satisfied when the line
					$l_1$ replaced by $l_2$.
					\\
					The other simplices being updated by the identity
					i.e. by $\vine(\tau_j): = \tau_j$, unless their boundary
					is $\u$ or $\v$, in which case there are updated as in
					previous Cases \ref{vine:negok} and \ref{vine:negnok}, by
					\cref{eq:vine_coboundary_update}.
					\begin{enumerate}
						\item In the first case ($\restr F {l_2}(\delta \u) \le \restr F {l_2}(\delta\v)$), simlilarly to Case \ref{vine:negok}, we have
							$\vine(\u) = \u$ (with $\vine(\tau_j) =\tau_j$), and $\vine(\v) = \u + \alpha \v$ (with $\vine(\tau_{j+1}) = \tau_j + \alpha\tau_{j+1}$); which ensures,
							since $\restr F {l_2} \left( \u + \alpha\v \right) = x_{i+1}^{l_2}$ and  $\restr F {l_1}(\v) = x_{i+1}^{l_1}$,
							that this case also induces a compatible matching function.
							If $\tau_j$ exists, then $\restr F {l_2}(\vine
							(\tau_j)) = \restr F {l_2} (\tau_j)$ which induces a
							compatible endpoint matching.
							If $\tau_j$ and $\tau_k$ exist, then, as $\restr F
							{l_2} (\tau_j) < \restr F {l_2} (\tau_l)$, we
							have $\restr F {l_2} (\vine(\tau_k))  = \restr F {l_2}(\tau_j + \alpha\tau_k) = \restr F {l_2} (\tau_k)$ which also induces a compatible endpoint matching.
							Hence, the vineyard barcode matching is compatible in this case.
						\item The second case ($\restr F {l_2}(\delta \v) < \restr F {l_2}(\delta\u)$) is similar to Case \ref{vine:negnok}, we pick
							$\vine(\u) = \u + \alpha\v$ and $\vine(\v) = \v$.
							We match the birthpoints
							\begin{equation*}
								x_i^{l_1} = \restr F {l_1} (\u) \textnormal{ with }
								\restr F {l_2}(\vine(\u)) =
								\restr F {l_2}(\u + \alpha\v )  					                  = x_{i+1}^{l_2},
								\textnormal{ and,}
							\end{equation*}
							\begin{equation*}
								x_{i+1}^{l_1} = \restr F {l_1} (\v) \textnormal{ with }
								\restr F {l_2}(\vine(\v)) =
								\restr F {l_2}(\v )  = \restr F {l_2} (\sigma_i) = x_{i}^{l_2}.
							\end{equation*}
							\cref{eq:vineyards_uncomp_idx_i} hence guarantees that these endpoints are strictly uncomparable, hence compatible.
							Furthermore, when $\tau_k$ exists, we match the deathpoints
							\begin{equation*}
								x_{k}^{l_1} = \restr F {l_1} (\tau_k) \textnormal{ with }
								\restr F {l_2}(\vine(\tau_k)) =
								\restr F {l_2}(\tau_k ) = x_{k}^{l_2}.
							\end{equation*}
							and when $\tau_j$ exist as well:
							\begin{equation*}
								x_{j}^{l_1} = \restr F {l_1} (\tau_j) \textnormal{ with }
								\restr F {l_2}(\vine(\tau_j)) =
								\restr F {l_2}(\tau_j + \alpha\tau_k )  = \restr F {l_2} (\tau_j) = x_{j}^{l_2},
							\end{equation*}

							In both cases, these deathpoint matching are compatible thanks to
							\cref{eq:vineyards_line_proj}.
							The matching function is therefore also compatible.
					\end{enumerate}
			\end{enumerate}
			\begin{figure}[H]
				\centering
				\begin{subfigure}[b]{.45\textwidth}
					\includesvg[width=\textwidth]{images/vineyards_diams/vine_2.i.svg}
					\caption{Illustration of case \ref{vine:case2i}.}
					\label{fig:vine2.i}
				\end{subfigure}
				\hfill
				\begin{subfigure}[b]{.45\textwidth}
					\includesvg[width=\textwidth]{images/vineyards_diams/vine_2.ii.svg}
					\caption{Illustration of case \ref{vine:case2ii}.}
					\label{fig:vine2.ii}
				\end{subfigure}
				\caption{Vineyard case: $\sigma_i$ and $\sigma _{i+1 }$ positive.}
			\end{figure}
		\item \label{vine:case3} $a$ is injective of corank 1 and $c$ is surjerctive of nullity 1, i.e., $\sigma_i$ is positive and $\sigma_{i+1}$ is negative.
			See \cref{fig:vine3} for an illustration.
			In this case, since these two chains are not of the same
			dimension, they do not interact together. Hence, we have
			$\vine(\u)=\u$ and $\vine(\v) = \v$ and
			\cref{eq:vineyards_line_proj} guarantees that the matching
			function is compatible.
		\item \label{vine:case4}  $a$ is surjective of nullity 1 and $c$ is
			injective of corank 1, i.e., $\sigma_i$ is negative and
			$\sigma_{i+1}$ is positive.
			See \cref{fig:vine4} for an illustration.
			This case is symmetrical to Case
			\ref{vine:case3}.
			\begin{figure}[H]
				\centering
				\begin{subfigure}[b]{.35\textwidth}
					\includesvg[width=\textwidth]{images/vineyards_diams/vine_3.svg}
					\caption{Illustration of case \ref{vine:case3}.}
					\label{fig:vine3}
				\end{subfigure}
				\hfill
				\begin{subfigure}[b]{.35\textwidth}
					\includesvg[width=\textwidth]{images/vineyards_diams/vine_4.svg}
					\caption{Illustration of case \ref{vine:case4}.}
					\label{fig:vine4}
				\end{subfigure}
				\caption{Vineyard case: $\sigma_i$ and $\sigma _{i+1 }$ of different signs.}
			\end{figure}
	\end{enumerate}
\end{proof}

\begin{remark}
	Note that, in this proof, and in the specific cases for which the vineyards
	algorithm makes a matching choice (which are Cases \ref{vine:case2ii} and \ref{vine:case1ii}), these corresponding choices are made so that one cycle is left unchanged.
	Furthermore, as permuted simplices are guaranteed to be strictly incomparable
	(\cref{eq:vineyards_uncomp_idx_i}),
	both choices (i.e., permuting $i$ and $i+1$ or not) induce a compatible
	matching.
	Also notice that, in the generic case, this case study allows to span the set of all
	possible matchings between two lines whose induced orders only differ from a single $(i\, i+1)$ transposition. \\
\end{remark}

\begin{remark}[Extension to free presentations]\label{rem:free_pres}
	In this proof, we only used the framework of simplicial complexes
	in order to ensure that
	simplices can be added one by one, which in turn guarantees that the
	corresponding linear maps are
	either injective of corank 1, or surjective of nullity 1.
	However, given a multi-parameter persistence module $\Mbb$, such an assumption
	on linear maps can also be guaranteed
	using a finite free presentation of $\Mbb$ (see
	\cite[Section 7]{botnanIntroductionMultiparameterPersistence2023}).
	Hence, computing a minimal presentation can be seen as a pre-processing step
	of our \MMA{} algorithm. \\
\end{remark}

\begin{remark}[Coxeter decompositions]
	Given a permutation of simplices between two lines, one might wonder how to
	decompose it into a product of transpositions $(i\, i+1)$ in order to apply the
	vineyards algorithm.
	This can be achieved in the three following steps.
	A permutation $\sigma\in
	\mathfrak S_n$ can always be decomposed into a product of cyclic
	permutations, i.e., permutations $\sigma_{i_1,\dots,i_k}\in \mathfrak S_n$
	satisfying:
	\begin{equation*}
		\forall 1\le i \le n, %
		\sigma_{i_1,\dots,i_k} (i) =
		\begin{cases}
			i_{j+1 \, \mathrm{mod}\, k} \textnormal{ if } i = i_j, \textnormal{ for some }1\leq j \leq k, \textnormal{ and } \\
			i \textnormal{ otherwise.}
		\end{cases}
	\end{equation*}
	Then, using the fact that any cyclic permutation
	$\sigma_{i_1,\dots,i_k} = (i_1\, i_{2})\circ (i_2\, i_3)\circ \cdots
	(i_{k-1}\, i_k)$ is a product of transpositions,
	it follows that every permutation is a product of transpositions as well.
	Finally, if $(i,j)$ is a transposition with $1\le i<j\le n$, using the following relation:
	\begin{equation*}
		(i\, j) = (i\, i+1)\circ \cdots \circ (j-1\, j) \circ (j-2\, j-1) \circ \cdots \circ(i \, i+1),
	\end{equation*}
	one can see that every permutation is in fact the product of adjacent transpositions.
	In practice, this product can be retrieved using a sorting algorithm, such as
	the insertion sort, or the bubble sort (see, e.g., \cite{knuthArtComputerProgramming1995}). \\
\end{remark}

\begin{remark}[Lazy vineyard update]
	Note that the $(i \, i+1)$ swaps can be directly inferred with
	the approach provided in
	\cite[Section 4, Lazy
	minimazation]{kerberFastMinimalPresentations2021}.
	More formally, using the same notations, the swaps $(i\, i+1)$ only occur
	precisely when two incomparable filtration values satisfy $x_i^l = x_{i+1}^l$
	for some line $l$,
	which, in the two-parameter case, corresponds to the presence of a line crossing the point
	$(\max \left\{ (x_i)_1,(x_{i+1})_1 \right\}, \max \left\{ (x_i)_2,(x_{i+1})_2
	\right\})$.
	This suggests that our \MMA{} algorithm can be trivially extended to fibered
	barcodes involving only such lines (instead of $\delta$-grids of lines), thus reducing its running time. We stick to grids in this article for the sake of clarity. \\
\end{remark}

\begin{proposition}[Vineyards is compatible for $n=2$]\label{prop:vine_compat}
	Let $\Mbb$ be a $2$-parameter persistence module, and $L$ be an ordered set of diagonal
	lines $L:=(l_i)_{1\le i\le N}$ with
	increasing basepoints.\footnote{Ordering $L$ in such a way is possible precisely because $n=2$.} %
	For each index $1\le i\le N$, let $\sigma_i$ be an arbitrary compatible matching function between $\barcode{\restr{\Mbb}{l_i}}$ and
	$\barcode{\restr{\Mbb}{l_{i+1}}}$ (obtained with, e.g.,~\cref{prop:vine_update_comp}).
	Then, the matching function $\sigma = (\sigma_i)_{1\le i \le N}$ is a compatible
	matching function on $L$.
\end{proposition}

\begin{proof}
	First, note that, given a compatible matching function
	$\sigma$ between two diagonal lines $l,l'\in L$, and letting
	$b\in\barcode{\restr{\Mbb}{l}},b'\in\barcode{\restr{\Mbb}{l'}}$ be any
	non-trivial pair of matched bars,
	then, assuming that, e.g., $(x,0)\le (x',0)$ are the basepoints of
	$l$ and $l'$ respectively, one has $\min (b)_1\le \min(b')_1$ and $\max(b)_2
	\ge \max(b')_2$.
	See \cref{fig:2d_compatible}. %
	\begin{figure}[H]
		\centering
		\includesvg[width=.3\textwidth]{vineyards_diams/2d_compatible}
		\caption{Let $l_1 < l_2$ be two diagonal lines of $\mathbb{R}^2$, and $x$,
			$y$ be two matched points in $l_1$ and $l_2$ respectively.
		Then, if the matching is compatible, one must have $x_1 \le y_1$ and $x_2 \ge y_2$.}
		\label{fig:2d_compatible}
	\end{figure}

	Now, let $x\in l_1$ be the grade of a given bar endpoint. A point $y$ is strictly incomparable with $x$ if either:
	\begin{enumerate}
		\item $y_1 \le x_1$ and $y_2 \ge x_2$, or
		\item $y_1 \ge x_1$ and $y_2 \le x_2$.
	\end{enumerate}
	Now, assuming that $y\in l_2$ and $x$ and $y$ are strictly incomparable, Case 1 can be excluded. Indeed, in that case,
	there exist constants $x_0,y_0,s,t\in \R$ such that
	$x= (x_0+s,s)$ and $y=(y_0+t,t)$, and hence:
	\begin{equation*}
		y_1\le x_1 \textnormal{ and } x_0<y_0 \,\implies\, t<s, \textnormal{ i.e., } y_2\le x_2 \textnormal{ and } y\leqn x\textnormal{, which is a contradiction}.
	\end{equation*}
	This shows that two bar endpoints matched by a compatible matching function
	between two consecutive diagonal lines with increasing basepoints must satisfy Case 2, and thus, that a sequence of compatible matching functions is still compatible.
\end{proof}

\section{Numerical experiments}
\label{sec:expe}

In this section, we showcase the performances of \MMA{}.
More precisely, we empirically study how the output quality and running time depend on the precision $\delta$ and the number of lines $|L|$. %
Then, we compare the running times of \MMA{} with those of
\RIVET~\cite{lesnickInteractiveVisualization2D2015} and the elder-rule
staircode (ERS)~\cite{caiElderRuleStaircodesAugmentedMetric2021},
which are our closest competitors in terms of producing visual and interpretable descriptors of persistence modules. %
Finally, we investigate how running time is affected by the number of filtrations. %
All experiments were done on a laptop with AMD Ryzen 4800 CPU and 16GB of RAM.
Our code is part of the \texttt{multipers} library
\cite{loiseauxMultipersMultiparameterPersistence2024} and is publicly available at \url{https://github.com/DavidLapous/multipers}. %

\paragraph*{Interpretation.}\label{expe:interpretability}
We first qualitatively show how to interpret our \mmaout{} decompositions on a toy
dataset in \cref{fig:mma_vs_pointed_filtration}, using two different
two-parameter filtrations (Čech and (edge-collapsed) Rips with sublevel sets of codensity) on a point cloud, computed using \cite{alonsoDelaunayBifiltrationsFunctions2024}.
In these examples,
we consider a coordinate $x\in \R^2$ that is included in the support of some specific summands of the \mmaout{} decomposition, i.e., in some specific colored shapes in the plot.
Then, we look back at the filtration at this coordinate $x$ and we identify the corresponding cycles.
Note that, at each of these points $x$, the cycle representatives (from
\cref{th:pers_pairing}) are already calculated when running \MMA{} and can thus be used for interpretation without additional computational cost.

\begin{figure}
	\centering{}
	\begin{subfigure}[b]{\textwidth}
		\includegraphics[width=.9\textwidth]{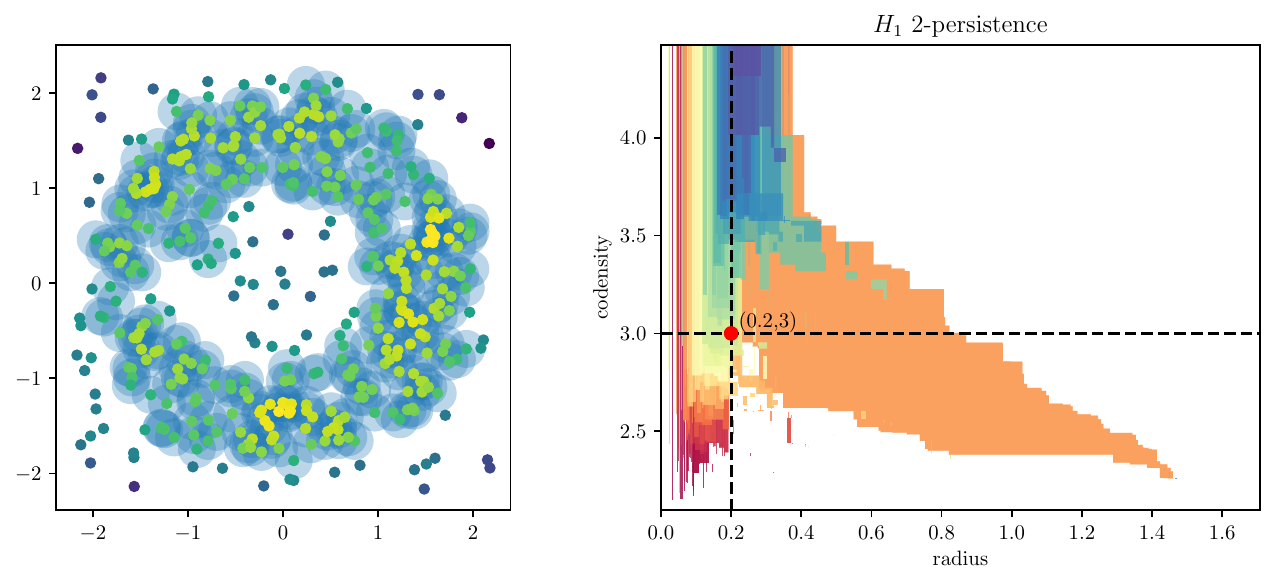}
		\caption{
			\textbf{(Left)} The Čech complex, at radius $0.2$, of the points with codensity values larger than $3$.
			\textbf{(Right)} The corresponding \mmaout{} decomposition, with a red dot at
			the coordinates fixed by the radius and codensity values used on the left.
		}
	\end{subfigure}
	\begin{subfigure}[b]{\textwidth}
		\includegraphics[width=.9\textwidth]{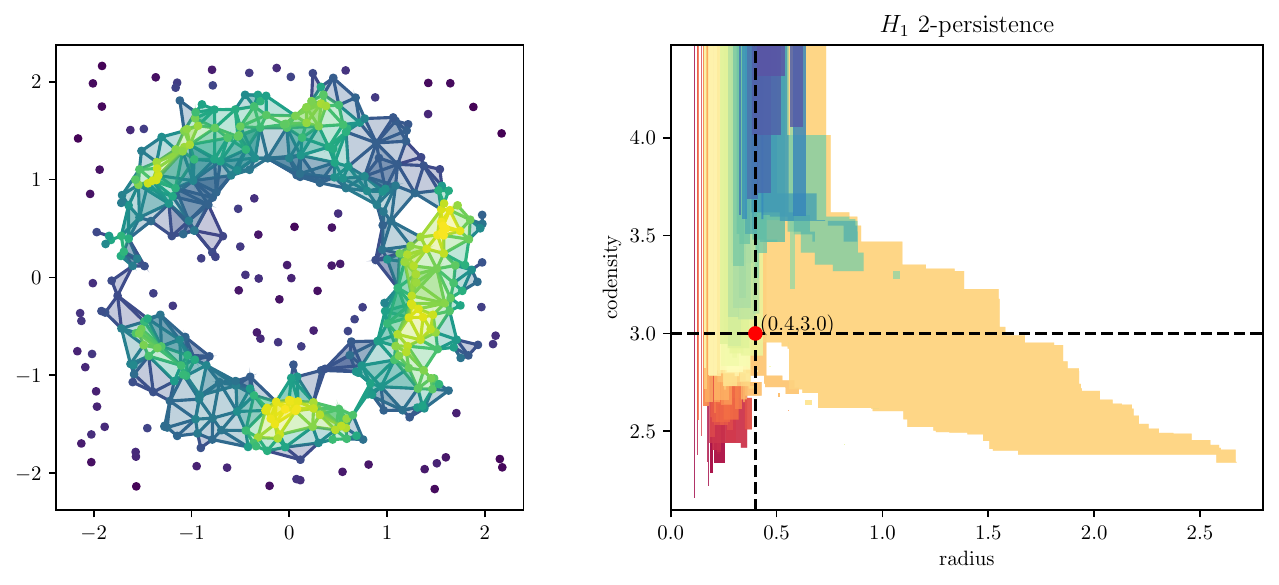}\\
		\caption{
			\textbf{(Left)} The (edge-collapsed) Rips complex, at radius $0.4$, of the points
			with codensity values larger than $3$.
			\textbf{(Right)} The corresponding \mmaout{} decomposition, with a red dot at
			the coordinates fixed by the radius and codensity values used on the left.
		}
	\end{subfigure}
	\caption{Interpretation of \mmaout{} decompositions computed by \MMA{}
	on a toy dataset.}
	\label{fig:mma_vs_pointed_filtration}
\end{figure}

We also provide another example, containing $25,000$ points uniformly sampled on the
unit square (noise), and $25,000$ points sampled on three distinct annuli
with different sizes concentration levels (signal). See \cref{fig:3_circle}.
As before, we then compute the Čech-codensity filtration on this point cloud.
The first cycle is very dense, so it should appear quickly w.r.t. the codensity
filtration (i.e., the lower part of the two-parameter filtration), and it is also small,
so it is expected to die quickly w.r.t. the Čech radius parameter.
The second cycle is bigger, and slightly less dense, so we can expect
it to appear later and survive more on the right side, and the same
goes for the third one.
The fourth one appears when the Čech radius is large enough (in order to connect the
three previous cycles together), and the condensity parameter is large enough as well (such that the three previous cycles are visible).

\begin{figure}
	\centering
	\includesvg[width=.9\textwidth]{images/3circle_7.svg}
	\caption{
		\textbf{(Left)} The point cloud dataset, colored with
		the (estimated) density of the sampling.
		\textbf{(Right)} The \mmaout{} decomposition produced by \MMA{}.
		One can see that four interval summands clearly stand out, and can be interpreted as described in the text.
		The summands that are induced by noise are all located on the rainbow strip
		on the left side.
	}\label{fig:3_circle}
\end{figure}

\paragraph*{Data sets and filtrations.}
In %
our next two experiments,
we focus on two real-world data sets of point clouds.
The first ones, called \texttt{LargeHypoxicRegion}, were obtained from immunohistochemistry in~\cite{vipond2021multiparameter}. These
are made of a few thousand points,  each representing a single cell.
The others were obtained from applying time-delay embedding in $\R^2$ on time series taken from a few
data sets (\texttt{Wine}, \texttt{Plane}, \texttt{OliveOil}, \texttt{Coffee}) from the \texttt{UCR} archive~\cite{UCRArchive}.
On all of these data sets, we computed bi-filtrations using the standard
Vietoris-Rips filtration, and the superlevel
sets of a Gaussian kernel density estimation (with bandwidth parameter $0.1d$
where $d$ is the diameter of the dataset), and we applied \MMA{} and its
competitors on the corresponding multi-parameter persistence modules in
homology dimensions $0$ and $1$ (note that the ERS can only be computed in degree
$0$). A typical example of an \MMA{} representation is given in
Figure~\ref{fig:immuno_mma_h01}.

In %
our third experiment,
we measure the dependence on the number of filtrations using a synthetic data set
obtained by sampling $300$ points in the unit square $[0,1]^2$, computing their Alpha simplicial complex, and generating
$n$ random filtration values on the points. %

Finally, it is worth noting that
we used multi-parameter edge collapses~\cite{filtration-domination} in order to
simplify the multi-parameter persistence modules (without losing information)
as much as possible before applying \RIVET{} and \MMA{}. The timing of this
simplifications are not taken into account, but they are not the computational
bottleneck of our computations. Furthermore, for the
\texttt{LargeHypoxicRegion}, we thresholded the Rips edges at $0.02$, leading
to simplicial complexes of $\sim\!85k$ and $\sim\!125k$ simplices respectively,
after simplifications.

\begin{figure}[H]
	\centering
	\includegraphics[width=\textwidth]{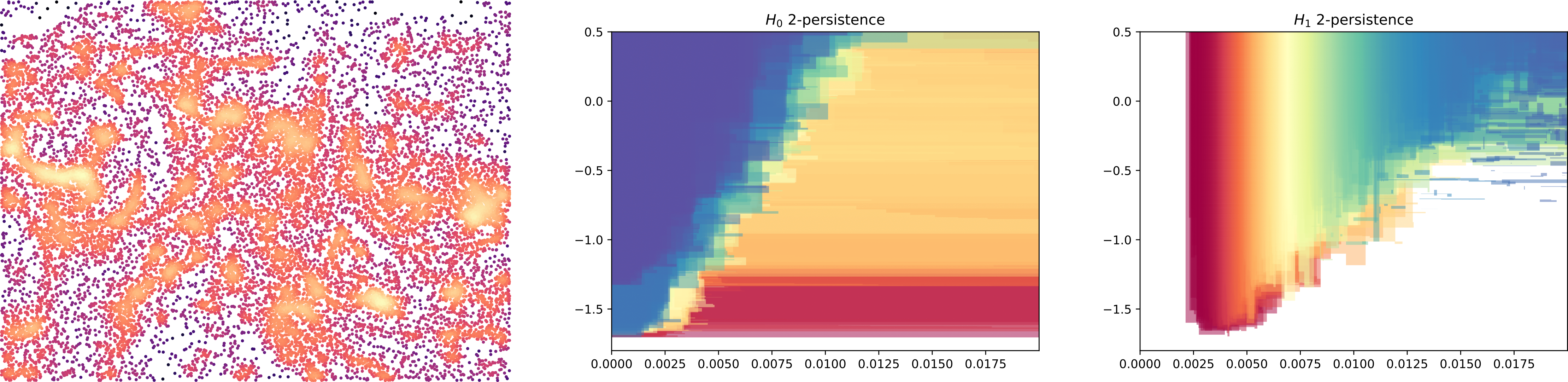}
	\caption{
		\textbf{(left)} \texttt{LargeHypoxicRegion2}, colored  with a kernel density estimation of bandwidth $0.01$,
		\textbf{(middle, resp. right)} Interval decomposition of degree 0 (resp. 1) homology given by \MMA{}. There are $\approx 28k$ non-trivial intervals, each one having a unique color.
	}
	\label{fig:immuno_mma_h01}
\end{figure}

\paragraph*{Convergence of \MMA{}.} %
We first empirically validate Propositions~\ref{prop:tildeIcandidate},
\ref{prop:mma_stab} and~\ref{prop:approx} by measuring how far is the output of
\MMA{} from the data when the precision parameter $\delta$ decreases and the
number of lines in $L$ increases. For this, we used the bottleneck distances
between the fibered barcodes (on $100$ random diagonal lines) of the outputs of
\MMA{} and the ones of the underlying modules as a proxy for the interleaving distances (since they are practically very difficult to evaluate). We call this the {\em estimated matching distance}.
Results are displayed in Figure~\ref{fig:conv_line}. One can see that the
convergence is empirically linear with the number of lines $|L|$, which is in
line with Propositions~\ref{prop:tildeIcandidate}, \ref{prop:mma_stab}
and~\ref{prop:approx} (since $|L|$ increases linearly as $\delta$ decreases for a fixed $n$). Note that the distance even reaches $0$ on a few cases.

\begin{figure}[h]
	\centering
	\includegraphics[width=0.45\textwidth]{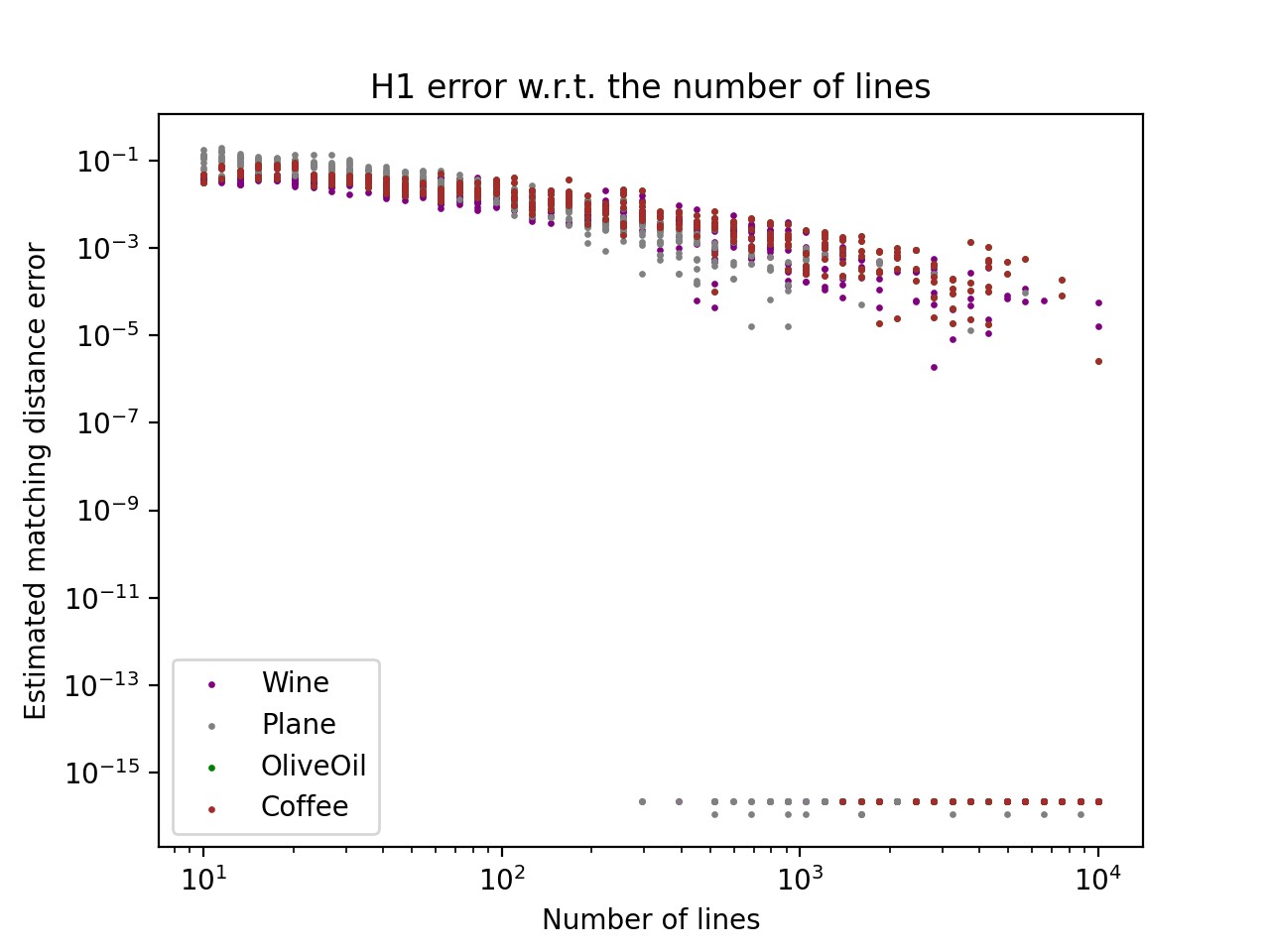}
	\includegraphics[width=0.45\textwidth]{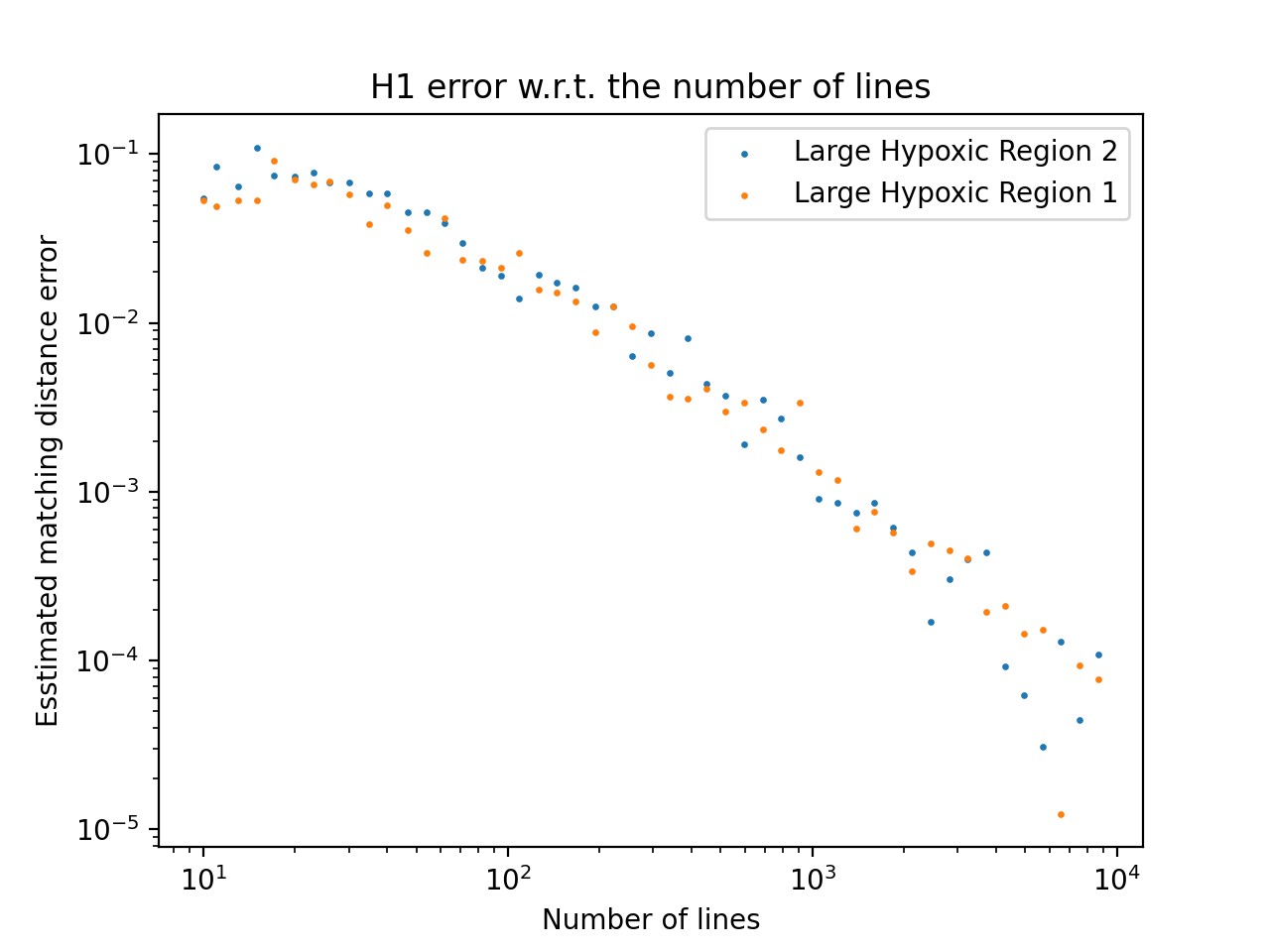}
	\caption{Convergence on \texttt{UCR} data sets {\bf (left)} and immunohistochemistry data sets {\bf (right)}.}
	\label{fig:conv_line}
\end{figure}

\paragraph*{Running times.} %
We now compare the running times of \MMA{} with those of \RIVET{} and the ERS.
Results can be found in Table~\ref{tab:TSres-small}. It is worth noting that on
several occasions, \RIVET{} and the ERS could not produce outputs in reasonable
time, due (among other things) to large memory consumption. On the other hands,
the fact that \MMA{} produces discretely presented intervals allows to encode them in a sparse manner with their corners.
Note that computations with \RIVET{} can be also approximated by coarsening the
filtration values, and thus the module. In practice, this corresponds to
restricting the 2-module to a $\kappa = \kappa_x\times \kappa_y$ grid, where
$\kappa_x, \kappa_y \in \mathbb N$ are the resolutions in both axes, see \cite[Section 8.2]{lesnickInteractiveVisualization2D2015}.
The parameters $\kappa$ and $|L|$ have (roughly) the same role, and can be related  with $\sqrt \kappa \simeq |L|$ (for a given, prescribed precision). %
Overall, we find that the running times of \MMA{} outperforms its competitors, except when
$\kappa$ is very small.
However, in this case, %
the corresponding output of \RIVET{} is very crude as the module is restricted on just a few points,
whereas \MMA{} produces intervals that are accurate along whole straight lines:
we find that for large data sets, \MMA{} is the only method that can produce accurate representations.

\begin{table}[h!]
	\resizebox{\columnwidth}{!}{%
		\begin{tabular}{|c | c c c | c | c c c |}
			\hline
			& \multicolumn{3}{c|}{\RIVET{}} & ERS & \multicolumn{3}{c|}{\MMA{}} \\
			& $\kappa=10^2$ & $\kappa=50^2$ & $\kappa=100^2$ & & $|L|=100$ & $|L|=1000$ & $|L|=10,000$\\
			\hline
			\texttt{Coffee} &
			$0.21 \pm 0.01, 0.18 \pm 0.01$ &
			$9.75 \pm 5.92, 0.35\pm 0.12$ &
			$--, 0.95\pm 0.56$
			&
			$0.34 \pm 0.04$
			&
			$0.0093\pm 0.001$ &
			$0.024 \pm 0.001$ &
			$0.16  \pm 0.008$ \\
			\texttt{Plane} &
			$0.19 \pm 0.005, 0.18 \pm 0.03$ &
			$4.36 \pm 2.24, 0.28\pm 0.04$ &
			$33.3 \pm 17.5, 0.56\pm 0.17$
			&
			$0.09\pm 0.03$
			&
			$0.004\pm 0.0$ &
			$0.012 \pm 0.001$ &
			$0.095 \pm 0.004$ \\
			\texttt{Wine} &
			$0.21 \pm 0.003, 0.19 \pm 0.007$ &
			$8.50 \pm 2.00, 0.22\pm 0.01$ &
			$--, 0.28\pm 0.023$
			&
			$0.22 \pm 0.04$
			&
			$0.004\pm 0.0$ &
			$0.016 \pm 0.0$ &
			$0.129 \pm 0.002$ \\

			\texttt{OliveOil} &
			$0.21 \pm 0.004, 0.19 \pm 0.002$ &
			$5.55 \pm 1.20, 0.31\pm 0.016$ &
			$--, 0.82\pm 0.17$
			&
			$1.39 \pm 0.03$
			&
			$0.026 \pm 0.001$ &
			$0.058 \pm 0.001$ &
			$0.37 \pm 0.006$
			\\
			\texttt{Worms} &
			$0.29 \pm 0.082, 0.23 \pm 0.23$ &
			$19.9 \pm 14.4, 4.60 \pm 5.0$ &
			$--, 31.36 \pm 36.24$
			&
			$3.85 \pm 0.1$
			&
			$0.22 \pm 0.11$ &
			$0.34 \pm 0.15$ &
			$1.35 \pm 0.41$
			\\
			\hline
			\texttt{LargeHypoxicRegion1} &
			$1.73,2.88$ &
			$--, 234$ &
			$--,--$
			&
			$--$
			&
			$26.4$ &
			$26.6$ &
			$59.4$
			\\
			\texttt{LargeHypoxicRegion2} &
			$2.39,6.04$ &
			$--,--$ &
			$--,--$
			&
			$--$
			&
			$57.3$ &
			$54.3$ &
			$102.9$
			\\
			\hline
		\end{tabular}
	}
	\vspace{3mm}
	\caption{Mean and variances of the running times (s) for \RIVET{}, the ERS and
		\MMA{}. We provide both  degree 0 (left) and 1 (right) homology timings for \RIVET{}, whereas the timings of \MMA{} include both.
		The double dashes correspond to out of memory errors, i.e., a memory usage that is over $12\mathrm{GB} $.
	}
	\label{tab:TSres-small}
\end{table}

Interestingly, computing 0-dimensional homology is sometimes slower than
1-dimensional homology for \RIVET{}; as it relies on computing minimal free
presentations, we think that this comes from the fact that minimal
presentations in homology dimension 0 can be more complex than their counterparts in homology dimension 1, i.e., they have much     %
more generators. %
We also investigate how running times of \MMA{} depend on the number of lines.
Unsurprisingly, one can see from Figure~\ref{fig:UCR_3F_H0_time} that running
time increases with the number of lines. However, the dependency looks
empirically sublinear, which could come from the fact that even though there
are more lines, these lines are closer to each other, and thus matching them
with vineyards requires fewer computation steps. This is also highlighted by
the running times of \texttt{LargeHypoxicRegion2}, Table \ref{tab:TSres-small} which are smaller when computing it over $1~000$ lines than $100$ lines.

\begin{figure}
	\centering
	\begin{minipage}{.45\textwidth}
		\centering
		\includegraphics[width=0.99\textwidth]{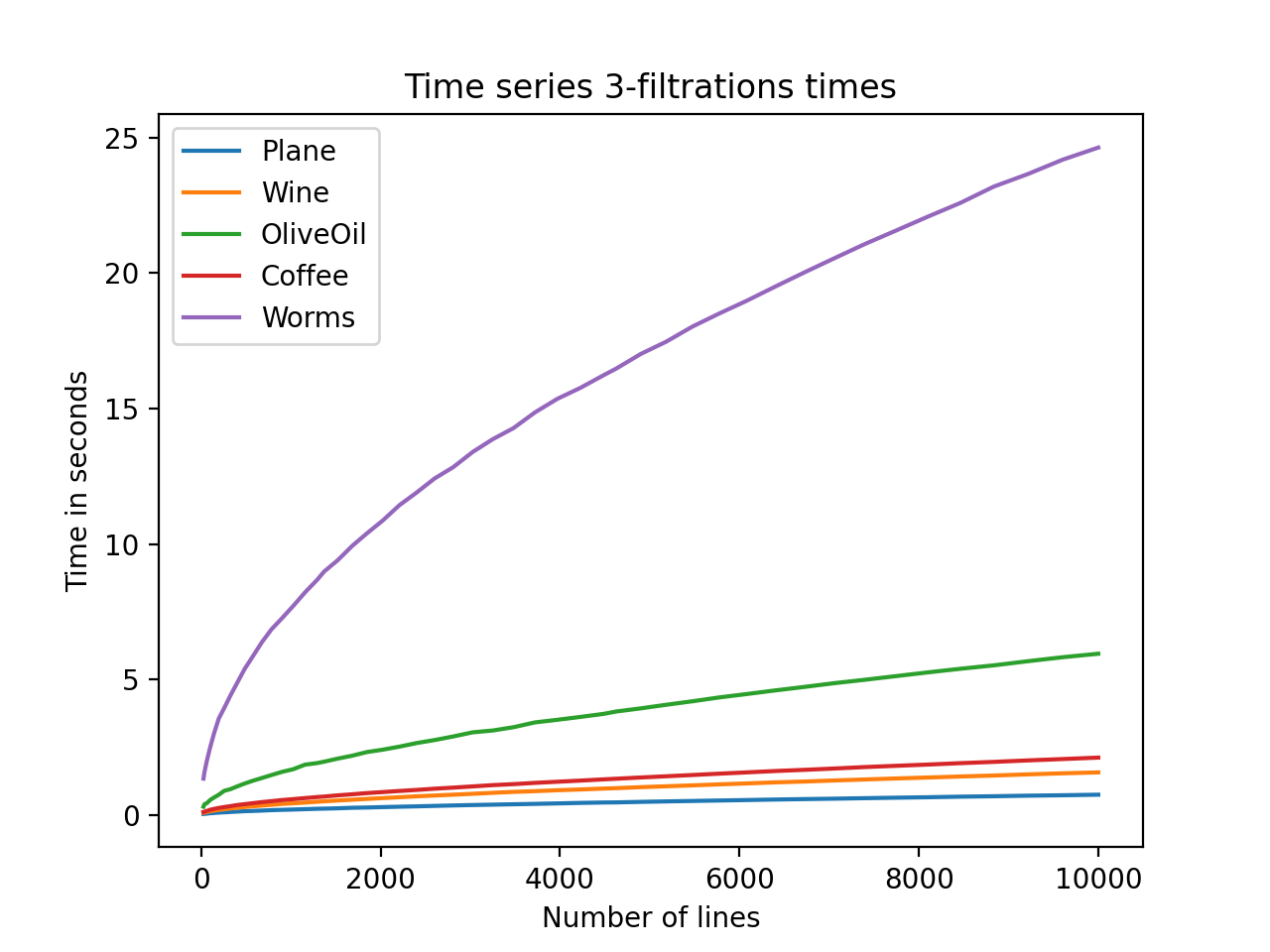}
		\captionof{figure}{Running time (s) needed to run \MMA{} on the \texttt{UCR} datasets.
		}
		\label{fig:UCR_3F_H0_time}
	\end{minipage}%
	\ \ \ \ \ \ \ \ \ \ \
	\begin{minipage}{.45\textwidth}
		\centering
		\includegraphics[width=0.99\textwidth]{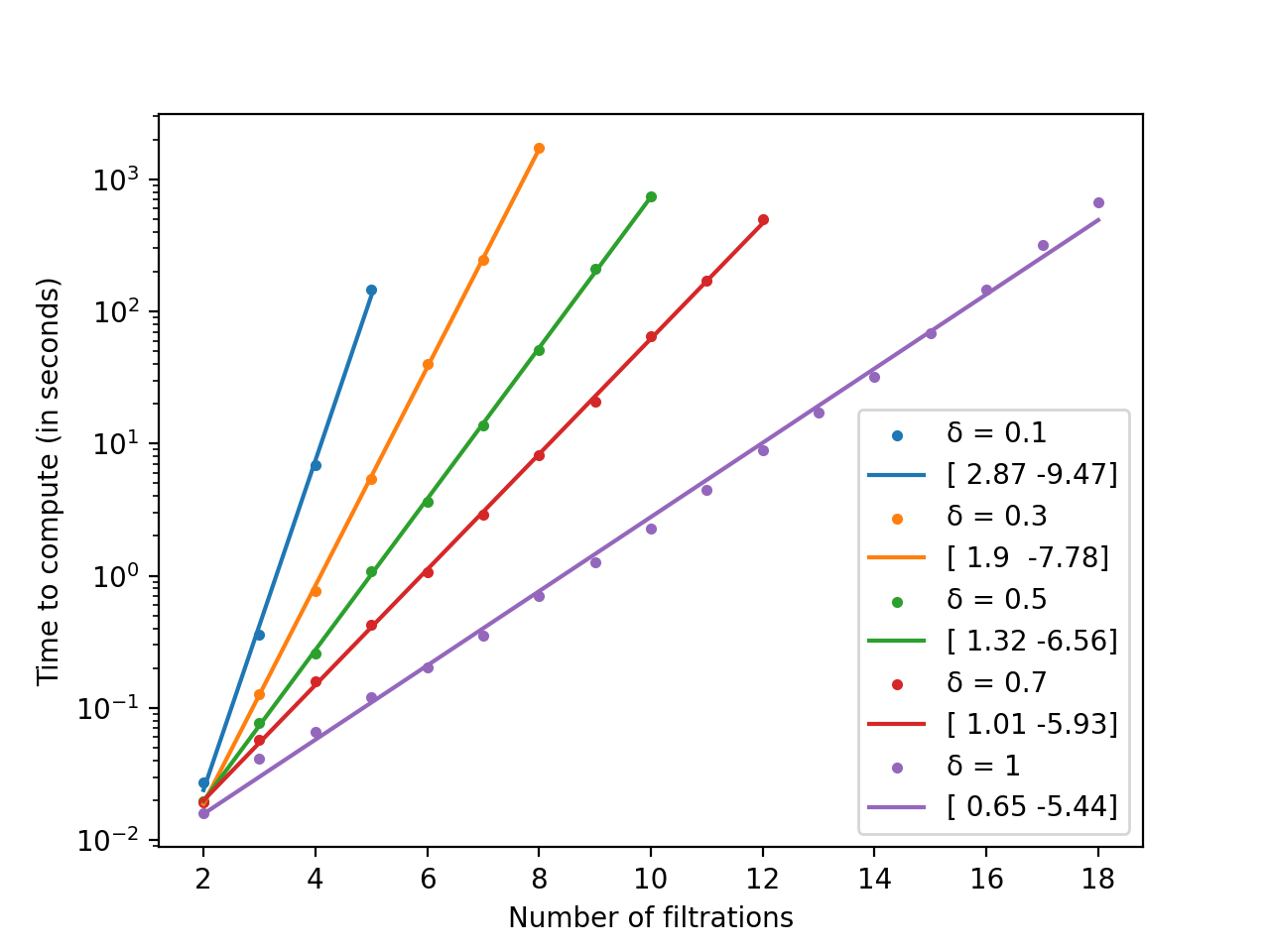}
		\captionof{figure}{Running time (s) of \MMA{} w.r.t. the number $n$ of filtrations.
		}
		\label{fig:time_vs_nfiltration}
	\end{minipage}
\end{figure}

\paragraph*{Dependence on number of filtrations.} %
Finally, we investigate how the running times of \MMA{} depend on the number
$n$ of filtrations.  Although most of the approaches in the literature are
limited to $n=2$ parameters, one can see from Figure~\ref{fig:time_vs_nfiltration} that \MMA{} can produce outputs in a reasonable amount of time for up to $n\simeq 10$ parameter filtrations.
As expected from the complexity of \MMA{} in Equation~(\ref{eq:MMA_complexity}), the running times increase exponentially with $n$.

\section{Conclusion}
\label{sec:conclusion}

\rebuttal{In this article, we present \MMA: a new algorithm for computing topological \rebuttal{descriptors} for multi-parameter persistence modules taking the form of \mmaout{} decompositions. Our algorithm %
	has complexity and running time
	that are controlled by user-defined parameters,
	and enjoy several approximation and stability properties.
	We also showcased the performances of \MMA{} on
	synthetic and real data sets.
	Our code is \rebuttal{
		part of the \texttt{multipers} library \cite{loiseauxMultipersMultiparameterPersistence2024} and is} publicly available at \url{https://github.com/DavidLapous/multipers}.} \\

\rebuttal{
	Along the way, we identified
	several directions
	for future work.
	\begin{enumerate}
		\item  %
		      While the outputs of \MMA{} satisfy approximation guarantees when computed with compatible matching functions, they remain arbitrary to some extent, as discussed at the end of related work and \cref{fig:non-stable-decomposition}. %

		      We hypothesize that, in the general case, the family $\mathcal F$ of \mmaoutcompat{} decompositions obtained from \MMA{} by varying $\matching$ across an appropriate family of compatible matching functions $\Sigma$, i.e., $\mathcal F=\{\MMA{}(\M,L,\sigma)\}_{\sigma\in \Sigma, \delta >0}$,
		      is a {\em complete} topological invariant of the module $\Mbb$.

		\item Practically speaking, the existence and construction of compatible matching functions for $n$-parameter persistence modules with $n>2$ is an open question. We hypothesize that matching functions computed from tracking representative cycles, in a similar way than
		      the construction of the graphcode~\cite{russoldGraphcodeLearningMultiparameter2024, kerberRepresentingTwoparameterPersistence2025}, could provide a step towards that direction.
		      Another possibility includes designing a convex optimization problem that would converge to compatible (and potentially stable) matching functions.

	\end{enumerate}
}

\section*{Statements and Declarations}
\paragraph{Ethical Approval}
Not applicable.
\paragraph{Competing interests}
No competing interests.
\paragraph{Authors' contributions}
D.L. M.C. and A.B worked out the proofs and wrote the manuscript, D.L. did the numerical experiments. All authors reviewed the manuscript. 
\paragraph{Funding}
D.L. was funded by the French government through the 3IA Côte d’Azur Investments, ANR-19-P3IA-0002.
M.C. was supported by Agence Nationale de la Recherche through ANR JCJC TopModel ANR-23-CE23-0014, and by the French government, through the 3IA Cote d’Azur Investments in the project
managed by the National Research Agency (ANR) with the reference number ANR-23-IACL-0001.
\paragraph{Availability of data and materials.}
All of the non-synthetic data sets used are publicly available, and listed below.
\begin{enumerate}
    \item The immunohistochemistry datasets can be retrieved from
    \\
    \url{https://github.com/MultiparameterTDAHistology/SpatialPatterningOfImmuneCells}, and
    \item The time series datasets can be retrieved from \url{http://www.timeseriesclassification.com/dataset.php}.
\end{enumerate}
All of the code used is publicly available as well.
\begin{enumerate}
    \item Our code, in \texttt{multipers}, at \url{https://github.com/DavidLapous/multipers},
    \item \texttt{Rivet}, at \url{https://github.com/rivetTDA/rivet/},
    \item ERS, at \url{https://github.com/Chen-Cai-OSU/ER-staircode}.
\end{enumerate}

\bibliography{biblio}

\begin{thebibliography}{10}

\bibitem{adamsPersistenceImagesStable2017}
Henry Adams, Tegan Emerson, Michael Kirby, Rachel Neville, Chris Peterson,
  Patrick Shipman, Sofya Chepushtanova, Eric Hanson, Francis Motta, and Lori
  Ziegelmeier.
\newblock Persistence {{Images}}: {{A Stable Vector Representation}} of
  {{Persistent Homology}}.
\newblock {\em Journal of Machine Learning Research (JMLR)}, 18(8):1--35, 2017.

\bibitem{alonsoDelaunayBifiltrationsFunctions2024}
{\'A}ngel~Javier Alonso, Michael Kerber, Tung Lam, and Michael Lesnick.
\newblock Delaunay {{Bifiltrations}} of {{Functions}} on {{Point Clouds}}.
\newblock In {\em Proceedings of the 2024 Annual {{ACM-SIAM}} Symposium on
  Discrete Algorithms ({{SODA}})}, pages 4872--4891. 2024.

\bibitem{filtration-domination}
{\'A}ngel~Javier Alonso, Michael Kerber, and Siddharth Pritam.
\newblock Filtration-domination in bifiltered graphs.
\newblock In {\em 2023 Proceedings of the Symposium on Algorithm Engineering
  and Experiments ({{ALENEX}})}, pages 27--38. 2023.

\bibitem{asashibaIntervalDecomposability2D2022}
Hideto Asashiba, Micka{\"e}l Buchet, Emerson~G. Escolar, Ken Nakashima, and
  Michio Yoshiwaki.
\newblock On {{Interval Decomposability}} of {{2D Persistence Modules}}.
\newblock {\em Computational Geometry: Theory and Applications},
  105--106:101879, 2022.

\bibitem{asashibaApproximation2DPersistence2023}
Hideto Asashiba, Emerson~G. Escolar, Ken Nakashima, and Michio Yoshiwaki.
\newblock On {{Approximation}} of {{2D Persistence Modules}} by
  {{Interval-decomposables}}.
\newblock {\em Journal of Computational Algebra}, 6--7:100007, 2023.

\bibitem{asashibaIntervalReplacementsPersistence2024}
Hideto Asashiba, {\'E}tienne Gauthier, and Enhao Liu.
\newblock Interval {{Replacements}} of {{Persistence Modules}}, 2024.

\bibitem{bjerkevikStabilizingDecompositionMultiparameter2025}
H{\aa}vard~Bakke Bjerkevik.
\newblock Stabilizing {{Decomposition}} of {{Multiparameter Persistence
  Modules}}.
\newblock {\em Foundations of Computational Mathematics (FoCM)}, 2025.

\bibitem{blaserCoreBifiltration2025}
Nello Blaser, Morten Brun, Odin~Hoff Gardaa, and Lars~M. Salbu.
\newblock Core {{Bifiltration}}, 2025.

\bibitem{blumbergStability2parameterPersistent2021}
Andrew~J. Blumberg and Michael Lesnick.
\newblock Stability of 2-{{Parameter Persistent Homology}}.
\newblock {\em Foundations of Computational Mathematics (FoCM)},
  24(2):385--427, 2022.

\bibitem{botnanRectangleDecomposable2ParameterPersistence2022}
Magnus~Bakke Botnan, Vadim Lebovici, and Steve Oudot.
\newblock On {{Rectangle-Decomposable}} 2-{{Parameter Persistence Modules}}.
\newblock {\em Discrete \& Computational Geometry (DCG)}, 68(4):1078--1101,
  2022.

\bibitem{botnanLocalCharacterizationsDecomposability2023}
Magnus~Bakke Botnan, Vadim Lebovici, and Steve Oudot.
\newblock Local {{Characterizations}} for {{Decomposability}} of 2-{{Parameter
  Persistence Modules}}.
\newblock {\em Algebras and Representation Theory}, 26(6):3003--3046, 2023.

\bibitem{botnanAlgebraicStabilityZigzag2018}
Magnus~Bakke Botnan and Michael Lesnick.
\newblock Algebraic {{Stability}} of {{Zigzag Persistence Modules}}.
\newblock {\em Algebraic \& Geometric Topology}, 18(6):3133--3204, 2018.

\bibitem{botnanIntroductionMultiparameterPersistence2023}
Magnus~Bakke Botnan and Michael Lesnick.
\newblock An introduction to multiparameter persistence.
\newblock In Aslak~Bakke Buan, Henning Krause, and {\O}yvind Solberg, editors,
  {\em {{EMS Series}} of {{Congress Reports}}}, volume~19, pages 77--150. EMS
  Press, 1 edition, November 2023.

\bibitem{botnanSignedBarcodesMultiparameter2024}
Magnus~Bakke Botnan, Steffen Oppermann, and Steve Oudot.
\newblock Signed {{Barcodes}} for {{Multi-parameter Persistence}} via {{Rank
  Decompositions}} and {{Rank-Exact Resolutions}}.
\newblock {\em Foundations of Computational Mathematics (FoCM)}, 2024.

\bibitem{bubenikStatisticalTopologicalData2015}
Peter Bubenik.
\newblock Statistical {{Topological Data Analysis}} using {{Persistence
  Landscapes}}.
\newblock {\em Journal of Machine Learning Research (JMLR)}, 16(3):77--102,
  2015.

\bibitem{Buchet2018}
Micka{\"e}l Buchet, Yasuaki Hiraoka, and Ippei Obayashi.
\newblock Persistent {{Homology}} and {{Materials Informatics}}.
\newblock {\em Nanoinformatics}, pages 75--95, 2018.

\bibitem{caiElderRuleStaircodesAugmentedMetric2021}
Chen Cai, Woojin Kim, Facundo M{\'e}moli, and Yusu Wang.
\newblock Elder-{{Rule-Staircodes}} for {{Augmented Metric Spaces}}.
\newblock {\em SIAM Journal on Applied Algebra and Geometry}, 5(3):417--454,
  2021.

\bibitem{carlssonTopologyData2009}
Gunnar Carlsson.
\newblock Topology and {{Data}}.
\newblock {\em Bulletin of the American Mathematical Society}, 46(2):255--308,
  2009.

\bibitem{carlssonTheoryMultidimensionalPersistence2009}
Gunnar Carlsson and Afra Zomorodian.
\newblock The {{Theory}} of {{Multidimensional Persistence}}.
\newblock {\em Discrete \& Computational Geometry}, 42(1):71--93, July 2009.

\bibitem{carriereMultiparameterPersistenceImage2020}
Mathieu Carri{\`e}re and Andrew~J. Blumberg.
\newblock Multiparameter persistence images for topological machine learning.
\newblock In {\em Proceedings of the 34th International Conference on Neural
  Information Processing Systems}, {{NIPS}}'20, Red Hook, NY, USA, 2020. Curran
  Associates Inc.

\bibitem{carrierePersLayNeuralNetwork2020}
Mathieu Carriere, Frederic Chazal, Yuichi Ike, Theo Lacombe, Martin Royer, and
  Yuhei Umeda.
\newblock {{PersLay}}: {{A}} neural network layer for persistence diagrams and
  new graph topological signatures.
\newblock In Silvia Chiappa and Roberto Calandra, editors, {\em Proceedings of
  the Twenty Third International Conference on Artificial Intelligence and
  Statistics}, volume 108 of {\em Proceedings of Machine Learning Research},
  pages 2786--2796. PMLR, August 2020.

\bibitem{carriereSlicedWassersteinKernel2017}
Mathieu Carri{\`e}re, Marco Cuturi, and Steve Oudot.
\newblock Sliced {{Wasserstein Kernel}} for {{Persistence Diagrams}}.
\newblock In Doina Precup and Yee~Whye Teh, editors, {\em Proceedings of the
  34th {{International Conference}} on {{Machine Learning}} ({{ICML}} 2017)},
  volume~70, pages 664--673. PMLR, 2017.

\bibitem{cerriBettiNumbersMultidimensional2013}
Andrea Cerri, Barbara~Di Fabio, Massimo Ferri, Patrizio Frosini, and Claudia
  Landi.
\newblock {{Betti Numbers}} in {{Multidimensional Persistent Homology}} are
  {{Stable Functions}}.
\newblock {\em Mathematical Methods in the Applied Sciences},
  36(12):1543--1557, August 2013.

\bibitem{Chazal2016}
Fr{\'e}d{\'e}ric Chazal, Vin De~Silva, Marc Glisse, and Steve Oudot.
\newblock {\em The {{Structure}} and {{Stability}} of {{Persistence Modules}}}.
\newblock {{SpringerBriefs}} in {{Mathematics}}. Springer International
  Publishing, 2016.

\bibitem{UCRArchive}
{Chen, Yanping and Keogh, Eamonn and Hu, Bing and Begum, Nurjahan and Bagnall,
  Anthony and Mueen, Abdullah and Batista, Gustavo}.
\newblock The {{UCR}} time series classification archive, 2015.

\bibitem{clauseMetaDiagrams2ParameterPersistence2023}
Nate Clause, Tamal~K. Dey, Facundo M{\'e}moli, and Bei Wang.
\newblock Meta-{{Diagrams}} for 2-{{Parameter Persistence}}.
\newblock {\em LIPIcs, Volume 258, SoCG 2023}, 258:25:1--25:16, 2023.

\bibitem{cochoyDecompositionExactPFD2020}
J{\'e}r{\'e}my Cochoy and Steve Oudot.
\newblock Decomposition of {{Exact PFD Persistence Bimodules}}.
\newblock {\em Discrete \& Computational Geometry (DCG)}, 63(2):255--293, 2020.

\bibitem{cohen-steinerVinesVineyardsUpdating2006}
David {Cohen-Steiner}, Herbert Edelsbrunner, and Dmitriy Morozov.
\newblock Vines and vineyards by updating persistence in linear time.
\newblock In {\em Proceedings of the Twenty-Second Annual Symposium on
  {{Computational}} Geometry}, {{SCG}} '06, pages 119--126, New York, NY, USA,
  June 2006. Association for Computing Machinery.

\bibitem{corbetKernelMultiparameterPersistent2019}
Ren{\'e} Corbet, Ulderico Fugacci, Michael Kerber, Claudia Landi, and Bei Wang.
\newblock A {{Kernel}} for {{Multi-parameter Persistent Homology}}.
\newblock {\em Computers \& Graphics: X}, 2:100005, 2019.

\bibitem{desilvaDualitiesPersistentCo2011}
Vin {de Silva}, Dmitriy Morozov, and Mikael {Vejdemo-Johansson}.
\newblock Dualities in {{Persistent (Co)Homology}}.
\newblock {\em Inverse Problems}, 27(12):124003, 2011.

\bibitem{deyComputationalTopologyData2022}
Tamal Dey and Yusu Wang.
\newblock {\em Computational topology for data analysis}.
\newblock {Cambridge University Press}, 2022.

\bibitem{deyDecomposingMultiparameterPersistence2025}
Tamal~K. Dey, Jan Jendrysiak, and Michael Kerber.
\newblock Decomposing {{Multiparameter Persistence Modules}}, 2025.

\bibitem{deyRectangularApproximationStability2021}
Tamal~K. Dey and Cheng Xin.
\newblock Rectangular {{Approximation}} and {{Stability}} of 2-parameter
  {{Persistence Modules}}, 2021.

\bibitem{deyGeneralizedPersistenceAlgorithm2022}
Tamal~K. Dey and Cheng Xin.
\newblock Generalized {{Persistence Algorithm}} for {{Decomposing
  Multiparameter Persistence Modules}}.
\newblock {\em Journal of Applied and Computational Topology (JACT)},
  6(3):271--322, 2022.

\bibitem{edelsbrunnerComputationalTopologyIntroduction2010}
Herbert Edelsbrunner.
\newblock {\em Computational Topology: An Introduction}.
\newblock Number v. 69 in Miscellaneous {{Books}}. American Mathematical
  Society, Providence, R.I, 2010.

\bibitem{fugacciChunkReductionMultiParameter2019}
Ulderico Fugacci and Michael Kerber.
\newblock Chunk {{Reduction}} for {{Multi-Parameter Persistent Homology}}.
\newblock In Gill Barequet and Yusu Wang, editors, {\em 35th {{International
  Symposium}} on {{Computational Geometry}} ({{SoCG}} 2019)}, volume 129 of
  {\em Leibniz {{International Proceedings}} in {{Informatics}} ({{LIPIcs}})},
  pages 37:1--37:14, Dagstuhl, Germany, 2019. Schloss Dagstuhl --
  Leibniz-Zentrum f{\"u}r Informatik.

\bibitem{hacquardEulerCharacteristicTools2024}
Olympio Hacquard and Vadim Lebovici.
\newblock Euler {{Characteristic Tools}} for {{Topological Data Analysis}}.
\newblock {\em Journal of Machine Learning Research (JMLR)}, 25(240):1--39,
  2024.

\bibitem{kerberFastMinimalPresentations2021}
Michael Kerber and Alexander Rolle.
\newblock Fast {{Minimal Presentations}} of {{Bi-graded Persistence Modules}}.
\newblock In Martin {Farach-Colton} and Sabine Storandt, editors, {\em
  Proceedings of the Symposium on Algorithm Engineering and Experiments (ALENEX
  2021)}, pages 207--220. Society for Industrial and Applied Mathematics, 2021.

\bibitem{russoldGraphcodeLearningMultiparameter2024}
Michael Kerber and Florian Russold.
\newblock Graphcode: {{Learning}} from multiparameter persistent homology using
  graph neural networks.
\newblock {\em Advances in Neural Information Processing Systems},
  37:41103--41131, December 2024.

\bibitem{kerberRepresentingTwoparameterPersistence2025}
Michael Kerber and Florian Russold.
\newblock Representing {{Two-parameter Persistence Modules}} via
  {{Graphcodes}}, 2025.

\bibitem{kimGeneralizedPersistenceDiagrams2021}
Woojin Kim and Facundo M{\'e}moli.
\newblock Generalized {{Persistence Diagrams}} for {{Persistence Modules}} over
  {{Posets}}.
\newblock {\em Journal of Applied and Computational Topology (JACT)},
  5(4):533--581, 2021.

\bibitem{knuthArtComputerProgramming1995}
Donald~Ervin Knuth.
\newblock The art of computer programming. 1: {{Fundamental}} algorithms.
\newblock Addison-Wesley, Reading, Mass, 2. ed., 34. [print.] edition, 1995.

\bibitem{landiRankInvariantStability2018}
Claudia Landi.
\newblock The {{Rank Invariant Stability}} via {{Interleavings}}.
\newblock In Erin~Wolf Chambers, Brittany~Terese Fasy, and Lori Ziegelmeier,
  editors, {\em Research in {{Computational Topology}}}, volume~13, pages
  1--10. Springer International Publishing, Cham, 2018.

\bibitem{leboviciLocalCharacterizationBlockdecomposability2024}
Vadim Lebovici, Jan-Paul Lerch, and Steve Oudot.
\newblock Local {{Characterization}} of {{Block-decomposability}} for
  {{Multiparameter Persistence Modules}}, 2024.

\bibitem{lesnickTheoryInterleavingDistance2015}
Michael Lesnick.
\newblock The {{Theory}} of the {{Interleaving Distance}} on {{Multidimensional
  Persistence Modules}}.
\newblock {\em Foundations of Computational Mathematics (FoCM)},
  15(3):613--650, 2015.

\bibitem{lesnickNotesMultiparameterPersistence2023}
Michael Lesnick.
\newblock Notes on {{Multiparameter Persistence}} (for {{AMAT}} 840).
\newblock 2023.

\bibitem{lesnickInteractiveVisualization2D2015}
Michael Lesnick and Matthew Wright.
\newblock Interactive {{Visualization}} of 2-{{D Persistence Modules}}, 2015.

\bibitem{lesnickComputingMinimalPresentations2022}
Michael Lesnick and Matthew Wright.
\newblock Computing {{Minimal Presentations}} and {{Bigraded Betti Numbers}} of
  2-{{Parameter Persistent Homology}}.
\newblock {\em SIAM Journal on Applied Algebra and Geometry}, 6(2):267--298,
  2022.

\bibitem{loiseauxFrameworkFastStable2023}
David Loiseaux, Mathieu Carri{\`e}re, and Andrew Blumberg.
\newblock A {{Framework}} for {{Fast}} and {{Stable Representations}} of
  {{Multiparameter Persistent Homology Decompositions}}.
\newblock {\em Advances in Neural Information Processing Systems},
  36:35774--35798, December 2023.

\bibitem{loiseauxMultipersMultiparameterPersistence2024}
David Loiseaux and Hannah Schreiber.
\newblock Multipers: {{Multiparameter Persistence}} for {{Machine Learning}}.
\newblock {\em Journal of Open Source Software (JOSS)}, 9(103):6773, 2024.

\bibitem{loiseauxStableVectorizationMultiparameter2023}
David Loiseaux, Luis Scoccola, Mathieu Carri{\`e}re, Magnus~Bakke Botnan, and
  Steve Oudot.
\newblock Stable {{Vectorization}} of {{Multiparameter Persistent Homology}}
  using {{Signed Barcodes}} as {{Measures}}.
\newblock In Alice Oh, Tristan Naumann, Amir Globerson, Kate Saenko, Moritz
  Hardt, and Sergey Levine, editors, {\em Advances in {{Neural Information
  Processing Systems}} 36 ({{NeurIPS}} 2023)}, volume~36, pages 68316--68342.
  Curran Associates, Inc., 2023.

\bibitem{mariaZigzagPersistenceReflections2015}
Cl{\'e}ment Maria and Steve~Y. Oudot.
\newblock Zigzag persistence via reflections and transpositions.
\newblock In {\em Proceedings of the Twenty-Sixth Annual {{ACM-SIAM}} Symposium
  on {{Discrete}} Algorithms}, {{SODA}} '15, pages 181--199, USA, January 2015.
  {Society for Industrial and Applied Mathematics}.

\bibitem{munkresElementsAlgebraicTopology1984}
James~R Munkres.
\newblock {\em Elements of Algebraic Topology}.
\newblock Addison Wesley, London, England, November 1984.

\bibitem{oudotPersistenceTheoryQuiver2015}
Steve Oudot.
\newblock {\em Persistence Theory: From Quiver Representations to Data
  Analysis}, volume 209 of {\em Mathematical {{Surveys}} and {{Monographs}}}.
\newblock American Mathematical Society, Providence, Rhode Island, December
  2015.

\bibitem{Rabadan2019}
Ra{\'u}l Rabad{\'a}n and Andrew~J. Blumberg.
\newblock {\em Topological {{Data Analysis}} for {{Genomics}} and
  {{Evolution}}: {{Topology}} in {{Biology}}}.
\newblock Cambridge University Press, 2019.

\bibitem{reininghausStableMultiscaleKernel2015}
Jan Reininghaus, Stefan Huber, Ulrich Bauer, and Roland Kwitt.
\newblock A {{Stable Multi-scale Kernel}} for {{Topological Machine Learning}}.
\newblock In {\em {{IEEE Conference}} on {{Computer Vision}} and {{Pattern
  Recognition}} ({{CVPR}} 2015)}, pages 4741--4748. IEEE, 2015.

\bibitem{vipondMultiparameterPersistenceLandscapes2020}
Oliver Vipond.
\newblock Multiparameter {{Persistence Landscapes}}.
\newblock {\em Journal of Machine Learning Research (JMLR)}, 21(61):1--38,
  2020.

\bibitem{vipond2021multiparameter}
Oliver Vipond, Joshua~A. Bull, Philip~S. Macklin, Ulrike Tillmann,
  Christopher~W. Pugh, Helen~M. Byrne, and Heather~A. Harrington.
\newblock Multiparameter {{Persistent Homology Landscapes Identify Immune Cell
  Spatial Patterns}} in {{Tumors}}.
\newblock {\em Proceedings of the National Academy of Sciences},
  118(41):e2102166118, 2021.

\bibitem{xinGRIL2parameterPersistence2023}
Cheng Xin, Soham Mukherjee, Shreyas~N. Samaga, and Tamal~K. Dey.
\newblock {{GRIL}}: {{A}} 2-parameter {{Persistence Based Vectorization}} for
  {{Machine Learning}}.
\newblock In Timothy Doster, Tegan Emerson, Henry Kvinge, Nina Miolane,
  Mathilde Papillon, Bastian Rieck, and Sophia Sanborn, editors, {\em
  Proceedings of the 2nd {{Annual Workshop}} on {{Topology}}, {{Algebra}}, and
  {{Geometry}} in {{Machine Learning}} ({{TAG-ML}} 2023)}, volume 221, pages
  313--333. PMLR, 2023.

\end{thebibliography}

\end{document}